\DeclarePairedDelimiter\abs{\lvert}{\rvert}
\DeclarePairedDelimiter\norm{\lVert}{\rVert}
\let\oldabs\abs
\def\abs{\@ifstar{\oldabs}{\oldabs*}}
\let\oldnorm\norm
\def\norm{\@ifstar{\oldnorm}{\oldnorm*}}
\newtheorem{theorem}{Theorem}
\newtheorem{lemma}[theorem]{Lemma}
\newtheorem{corollary}[theorem]{Corollary}
\newtheorem{proposition}[theorem]{Proposition}
\theoremstyle{definition}
\theoremstyle{remark}
\newtheorem*{nonumbertheorem}{{\bf Theorem}}
\newtheorem*{remark}{Remark}
\numberwithin{theorem}{section}
\numberwithin{proposition}{section}
\numberwithin{lemma}{section}
\numberwithin{corollary}{section}
\numberwithin{equation}{section}
\numberwithin{conjecture}{section}
\setlist[enumerate,1]{before=}
\newcommand{\Arg}{\mathrm{Arg}}
\newcommand{\Log}{\operatorname{Log}}
\newcommand{\N}{\mathbb{N}}
\newcommand{\R}{\mathbb{R}}
\newcommand{\C}{\mathbb{C}}
\renewcommand{\pmod}[1]{\  \,  \left( \mathrm{mod} \,  #1 \right)}
\keywords{plane partition, trace, equidistribution, secondary term, plane overpartition, circle method}
\begin{document}

\title[Sign changes in statistics for plane partitions]{Sign changes in statistics for plane partitions}
\author{Walter Bridges, Johann Franke, and Joshua Males}

\begin{abstract}
	Recent work of Cesana, Craig and the third author shows that the trace of plane partitions is asymptotically equidistributed in residue classes mod $b$.  Applying a technique of the first two authors and Garnowski, we prove asymptotic formulas for the secondary terms in this equidistribution, which are controlled by certain complex numbers generated by a twisted MacMahon-type product.  We further carry out a similar analysis for a statistic related to plane overpartitions.
\end{abstract}

\maketitle

\section{Introduction and statement of results}

Partitions are ubiquitous in number theory, and in the last century have become one of the most well-studied combinatorial objects. For example, if $p(n)$ denotes the number of partitions of a positive integer $n$, Hardy and Ramanujan \cite{HardyRama} introduced their famed Circle Method in order to prove that as $n \to \infty$ we have
\begin{align*}
p(n) \sim \frac{1}{4 n \sqrt{3}} \exp\left(\pi \sqrt{\frac{2n}{3}}\right),
\end{align*}
a technique which was perfected by Rademacher \cite{Rademacher}, with further versions introduced by Wright \cites{Wright1,Wright2} among many others. Partitions also appear in many areas outside of number theory and combinatorics, such as in algebraic topology \cite{BCMO} and representation theory \cites{FS,GO}.

One of the most natural generalizations of partitions are the so-called \textit{plane partitions}. A plane partition of $n \in \mathbb{N}$ (see e.g.\@ \cite{Andrews}) is a two-dimensional array $\pi=\{\pi_{j,k}\}_{j, k \geq 1}$ of non-negative integers that is non-increasing in both indices, i.e $\pi_{j,k} \geq \pi_{j+1,k}$, $\pi_{j,k} \geq \pi_{j,k+1}$ for all $j$ and $k$, and fulfills  $|\pi| := \sum_{j,k} \pi_{j,k} = n$. Plane partitions can be visualized in the plane as in Figure 1.

\begin{figure}[h]
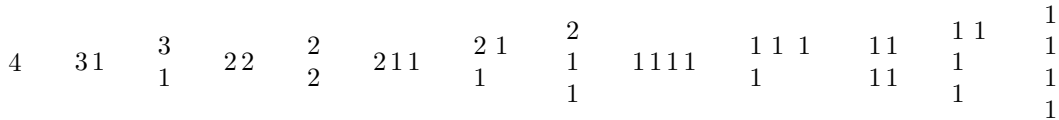
 \label{F:PPexample}
    \begin{center}
    $$
    4 \qquad \begin{matrix} 3 & \hspace{-.3cm} 1 \end{matrix} \qquad \begin{matrix} 3 \\ 1 \end{matrix} \qquad \begin{matrix} 2 & \hspace{-.3cm} 2 \end{matrix} \qquad \begin{matrix} 2 \\ 2 \end{matrix} \qquad \begin{matrix} 2 & \hspace{-.3cm} 1 & \hspace{-.3cm} 1 \end{matrix} \qquad \begin{matrix} 2 & \hspace{-.3cm} 1 \\ 1 & \end{matrix} \qquad \begin{matrix} 2 \\ 1 \\ 1  \end{matrix} \qquad \begin{matrix} 1 & \hspace{-.3cm} 1 & \hspace{-.3cm} 1 & \hspace{-.3cm} 1  \end{matrix} \qquad \begin{matrix} 1 & \hspace{-.3cm} 1 & \hspace{-.3cm} 1 \\ 1  & & \end{matrix} \qquad \begin{matrix} 1 & \hspace{-.3cm} 1  \\ 1  & \hspace{-.3cm} 1  \end{matrix}
    \qquad
    \begin{matrix} 1 & \hspace{-.3cm} 1  \\ 1  & \\ 1 & \end{matrix} \qquad \begin{matrix} 1 \\ 1  \\ 1   \\ 1  \end{matrix}
    $$

   \caption{ All plane partitions of size 4.}
   \end{center} 
     
\end{figure}

 Within number theory plane partitions have many applications, with an excellent overview given in the classical reference \cite{Stanley2}. They also appear in areas outside of number theory and combinatorics, and have seen a surge in interest in recent years. For example, they are intricately related to certain aspects of superconformal field theory \cite{Ok} and in connection to counting small black holes in string theory \cite{DDMP}. In the present paper, we determine refined information on certain statistics for plane partitions, complementing the growing literature in the area.

Let $pp(n)$ count the number of plane partitions of $n$ (with $pp(0):=1$).  A classical result of MacMahon \cite{MacMahon} gives the product generating function
\begin{equation}\label{E:PPgeneratingfunction}\sum_{\pi} q^{|\pi|}=\sum_{n\geq 0}pp(n) q^n = \prod\limits_{n\geq 1} \dfrac{1}{(1 - q^n)^n}.\end{equation}
One well-studied statistic associated a plane partition $\pi$ is its {\it trace} $t(\pi)$, which is the sum of the diagonal entries
$$t(\pi) = \sum_{j\geq 1} \pi_{j,j}.$$  Let $pp(m,n)$ denote the number of plane partitions with trace $m$.  Stanley \cite{Stanley} applied a bijection of Bender and Knuth \cite{BenderKnuth} to show that the trace is generated by a simple refinement of \eqref{E:PPgeneratingfunction},
\begin{equation}\label{E:twovargenfn}PP(\zeta;q) :=\sum_{\pi} \zeta^{t(\pi)}q^{|\pi|}= \sum_{n,m\geq 0} pp(m,n) \zeta^m q^n = \prod\limits_{n\geq 1} \dfrac{1}{(1 - \zeta q^n)^n}.\end{equation}
The coefficient of $q^n$ above is a polynomial in $\zeta$, which we denote by $T_n(\zeta)$.

To enumerate plane partitions with trace in residue classes, we can substitute roots of unity for $\zeta$ and use orthogonality.  Let $pp(a,b,n)$ denote those plane partitions of $n$ with trace congruent to $a\pmod{b}$ and let $\zeta_k^h:=e^{2\pi i \frac{h}{k}}$.  Then
\begin{equation}\label{E:ppintermsofT}
pp(a,b,n) =\frac{pp(n)}{b}+\frac{1}{b}\sum_{1 \leq \nu \leq b-1} \zeta_b^{- a\nu }T_n(\zeta_b^{ \nu }).
\end{equation}
Work of Cesana, Craig and the third author shows that the term $pp(n)$ dominates the right-hand side as $n\to \infty$; that is, we have asymptotic equidistribution of the trace in residue classes mod $b$.
\begin{nonumbertheorem}[Theorem 1.4, \cite{CCM}]
For all $a,b\in \mathbb{N}$, we have, as $n \to \infty$,
$$
\frac{pp(a,b,n)}{pp(n)} \sim \frac{1}{b}.
$$
\end{nonumbertheorem}
Secondary terms in the asymptotic behavior of $pp(a,b,n)$ are controlled by the polynomials $T_n(\zeta_b^{\nu})$.  In particular, consider two distinct residue classes $a_1 \not\equiv a_2 \pmod{b}$.  Then in the difference \newline $pp(a_1,b,n)-pp(a_2,b,n)$, the term $pp(n)$ in \eqref{E:ppintermsofT} cancels and to predict the oscillation in this difference as $n \to \infty$, we require the asymptotic behavior of the complex numbers $T_n(\zeta_b^{\nu})$.  

To state our main result, recall the trilogarithm which is defined for $|z| \leq 1$ by the series $\mathrm{Li}_3(z)=\sum_{n \geq 1} \frac{z^n}{n^3}.$  Let $\theta_{12}=0.47585\dots$ be the solution to
$$
\mathrm{Re} \left(\sqrt[3]{\mathrm{Li}_3\left(e^{2\pi i \theta}\right)}\right) =\frac{\mathrm{Re}\left(\sqrt[3]{\mathrm{Li}_3\left(e^{4\pi i \theta}\right)}\right)}{2}, \qquad 0 \leq \theta \leq \frac{1}{2}.
$$
(See Corollary \ref{C:TrilogDominant}.)  Then we have the following asymptotic formulas.  As $T_n\left(\overline{\zeta}\right)=\overline{T_n(\zeta)},$ we state them only for Im$(\zeta) \geq 0$.

\begin{theorem}\label{T:PPasymp}  Let $a,b$ be integers with $1 \leq a \leq b$ and $\gcd(a,b)=1$. 
\begin{enumerate}
    \item If $0< \frac{a}{b}< \theta_{12}$, then $$  T_n\left(\zeta_b^a\right) \sim \frac{\left(1-\zeta_b^a\right)^{\frac{1}{12}}\mathrm{Li}_3\left(\zeta_b^a\right)^{\frac{1}{6}}}{2^{\frac{1}{3}}\sqrt{3\pi}n^{\frac{2}{3}}}\exp\left(\frac{3}{2^{\frac{2}{3}}}\mathrm{Li}_3\left(\zeta_b^a\right)^{\frac{1}{3}}n^{\frac{2}{3}}\right).$$
    \item If $\theta_{12} < \frac{a}{b} < \frac{1}{2}$, then 
    $$  T_n\left(\zeta_b^a\right) \sim (-1)^n \frac{\left(1-\zeta_b^a\right)^{\frac{1}{6}} \mathrm{Li}_3\left(\zeta_b^{2a}\right)^{\frac{1}{6}}}{\left(1+\zeta_b^a\right)^{\frac{1}{12}}2^{\frac{5}{6}}\sqrt{3\pi}n^{\frac{2}{3}}}\exp\left(\frac{3}{2^{\frac{5}{3}}}\mathrm{Li}_3\left(\zeta_b^{2a}\right)^{\frac{1}{3}}n^{\frac{2}{3}}\right).$$
    \item We have
    $$
    T_n(-1) \sim (-1)^n \frac{e^{-\zeta'(-1)} \zeta(3)^{\frac{5}{36}}}{2^{\frac{3}{4}}\sqrt{3\pi}n^{\frac{23}{36}}}\mathrm{exp}\left(\frac{3}{2^{\frac{5}{3}}}\zeta(3)^{\frac{1}{3}}n^{\frac{2}{3}} \right).
    $$
\end{enumerate}
\end{theorem}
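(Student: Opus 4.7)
The plan is to apply Wright's variant of the circle method: by Cauchy's theorem,
\[T_n(\zeta)=\frac{1}{2\pi i}\oint_{|q|=e^{-s_0}}\frac{PP(\zeta;q)}{q^{n+1}}\,dq,\]
and we evaluate the integral by saddle-point analysis at the dominant cusp on the contour, plus a uniform bound showing that all other arcs contribute negligibly. The cusps $q=1$ and $q=-1$ are the only potential contributors to the main term (with all other roots of unity eliminated by Corollary~\ref{C:TrilogDominant}), and the threshold $\theta_{12}$ decides which of them dominates. Parts~(1), (2), and (3) of the theorem correspond respectively to $q=1$ dominant, $q=-1$ dominant (with $\zeta\ne -1$), and the degenerate endpoint $\zeta=-1$.

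The main technical ingredient is an asymptotic expansion of $\log PP(\zeta;q)$ near each cusp, obtained via the Mellin transform. Near $q=1$, the Mellin transform of $\log PP(\zeta;e^{-s})=\sum_{n,m\geq 1}\frac{n\zeta^m}{m}e^{-nms}$ is $\Gamma(w)\zeta(w-1)\mathrm{Li}_{w+1}(\zeta)$; shifting the inversion contour past the simple poles at $w=2$ and $w=0$ yields leading term $\mathrm{Li}_3(\zeta)/s^2$ with $O(1)$ correction $\tfrac{1}{12}\log(1-\zeta)$. Near $q=-1$, writing $q=-e^{-s}$ and splitting the double sum by parity of the outer index gives a Mellin transform $\Gamma(w)\zeta(w-1)\bigl[(1-2^{1-w})\mathrm{Li}_{w+1}(-\zeta)+2^{1-w}\mathrm{Li}_{w+1}(\zeta)\bigr]$; its pole at $w=2$ collapses via the polylogarithm duplication identity to yield leading term $\mathrm{Li}_3(\zeta^2)/(8s^2)$, and for $\zeta\ne\pm 1$ the simple pole at $w=0$ contributes the explicit constant $\tfrac{1}{6}\log(1-\zeta)-\tfrac{1}{12}\log(1+\zeta)$. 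The case $\zeta=-1$ is degenerate: the factor $\mathrm{Li}_{w+1}(-\zeta)=\zeta(w+1)$ contributes a second pole at $w=0$, making it a double pole whose residue carries a $-\tfrac{1}{12}\log s-\zeta'(-1)$ term. This logarithmic correction is precisely what produces the modified powers $n^{-23/36}$ and $\zeta(3)^{5/36}$, as well as the factor $e^{-\zeta'(-1)}$, in part~(3).

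Inserting each expansion into $\log(PP(\zeta;q)q^{-n})$ and solving the saddle equation produces saddles at $s_0^3=2\mathrm{Li}_3(\zeta)/n$ near $q=1$ and $s_0^3=\mathrm{Li}_3(\zeta^2)/(4n)$ near $q=-1$, with respective saddle values $\tfrac{3}{2^{2/3}}\mathrm{Li}_3(\zeta)^{1/3}n^{2/3}$ and $\tfrac{3}{2^{5/3}}\mathrm{Li}_3(\zeta^2)^{1/3}n^{2/3}-i\pi n$; the factor $(-1)^n$ appearing in parts~(2) and~(3) is the exponential $e^{-i\pi n}$ of the latter. The balance between the real parts of these two values is exactly the defining relation of $\theta_{12}$. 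Standard Gaussian approximation on the major arc, of the form $\tfrac{1}{2\pi}\sqrt{2\pi/\Phi''(s_0)}$, multiplied by the exponential of the $O(1)$ Mellin constant, produces the stated prefactors; for part~(3) the extra factor $s_0^{-1/12}$ from the log term and $e^{-\zeta'(-1)}$ from its residue are then evaluated at the saddle.

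The principal obstacle is the minor-arc bound. On the portion of $|q|=e^{-s_0}$ away from the designated cusp one must show that $|PP(\zeta;q)|$ is exponentially smaller than at the saddle. This step is subtle because the subsidiary cusps (other roots of unity) carry nontrivial $\mathrm{Li}_3$-type contributions of their own, and ensuring that these cannot catch up to the dominant one requires the full strength of Corollary~\ref{C:TrilogDominant}. I would adapt the circle-method framework developed by the first two authors with Garnowski, which produces uniform bounds on twisted MacMahon-type products evaluated at roots of unity by comparison with their leading Mellin asymptotic at each secondary cusp. Once these bounds are established, the saddle-point analysis on the major arc completes the proof.
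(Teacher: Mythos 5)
Your proposal is correct and follows essentially the same strategy as the paper: identify the dominant cusp via Corollary \ref{C:TrilogDominant}, run a saddle-point analysis there, and control the remaining arcs by adapting the twisted harmonic-series machinery from \cite{BFG}. The only technical divergence is that you expand $\log PP(\zeta;q)$ near the cusps via Mellin transforms, whereas the paper uses the Euler--Maclaurin summation result of \cite{Eulermacbringmann} (Theorem \ref{T:EulerMac}); these are interchangeable here, and your residue computations --- including the double pole at $w=0$ when $\zeta=-1$ that produces the $\log s$ term and shifts the exponents to $n^{-23/36}$ and $\zeta(3)^{5/36}$ --- reproduce all the paper's constants exactly.
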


\begin{remark}
The case $\zeta=1$ is the classical asymptotic formula for plane partitions due to Wright \cite{Wright3},
$$
    T_n(1)=pp(n) \sim \frac{\zeta(3)^\frac{7}{36}e^{\zeta'(-1)}}{2^{\frac{11}{36}}\sqrt{3\pi}n^\frac{25}{36}} \exp\left(\frac{3\zeta(3)^\frac13}{2^\frac23}n^\frac23\right).
$$
When $\zeta=-1$, we have $T_n(-1)=pp(0,2,n)-pp(1,2,n),$ the difference of plane partitions with even and odd trace.
\end{remark}

From our asymptotic formulas, it follows that the differences $pp(a_1,b,n)-pp(a_2,b,n)$, when rescaled, oscillate like a cosine.  (See Figure 2.)

\begin{figure}[h]
         \centering
         \includegraphics[width=90mm,height= 50mm]{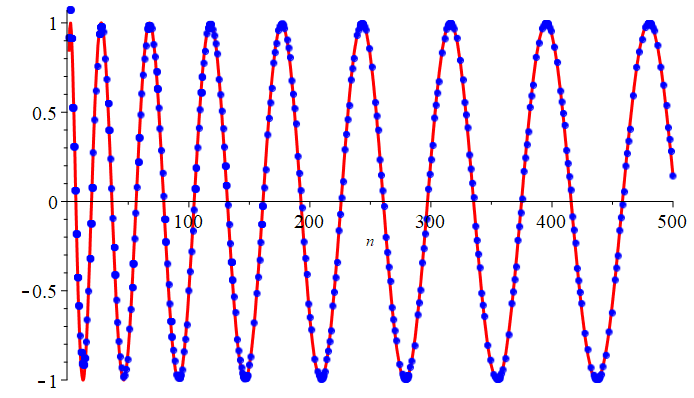}
        
\label{fig:difffigintro}

\caption{The blue dots depict the function $\frac{pp(1,5,n)-pp(4,5,n)}{Bn^{-\frac34}e^{3 \cdot 2^{-\frac{2}{3}}\lambda_1 n^{\frac{2}{3}}}}$, and the red line is the asymptotic prediction  $\cos\left(\alpha + 3 \cdot 2^{-\frac{2}{3}}\lambda_2 n^{\frac{2}{3}}\right)$. The approximate values of the constants are $B \approx 0.19971$, $\alpha \approx -1.41897$, $\lambda_1 \approx 0.89873$, and $\lambda_2\approx 0.44610$.}
\end{figure}

\begin{corollary} \label{C:pp-signchanges} Let $b \geq 3$, and $a_1 \not\equiv a_2$ be two classes modulo $b$. Then we have 
\begin{align*}
    \frac{pp(a_1, b, n) - pp(a_2, b, n)}{B_{a_1, a_2, b} n^{-\frac23} \exp\left(\frac{3}{2^{\frac23}} \lambda_1 n^{\frac{2}{3}}\right)} = \cos\left( \alpha_{a_1, a_2, b} + \frac{3}{2^{\frac{2}{3}}} \lambda_2 n^{\frac{2}{3}}\right) + o(1),
\end{align*}
where $\lambda_1 + i\lambda_2 := \mathrm{Li}_3(\zeta_b)^{\frac13}$, and $B_{a_1, a_2, b} > 0$ and $\alpha_{a_1, a_2, b} \in [0, 2\pi)$ are defined by 
\begin{align*}
    B_{a_1, a_2, b} e^{i\alpha_{a_1, a_2, b}} = \frac{2^{\frac{2}{3}}}{b\sqrt{3\pi}} \left( \zeta_{b}^{-a_1} - \zeta_b^{-a_2}\right) \left( 1 - \zeta_b\right)^{\frac{1}{12}} (\lambda_1 + i\lambda_2)^{\frac{1}{2}}.
\end{align*}
\end{corollary}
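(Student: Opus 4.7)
The plan is to combine the orthogonality identity \eqref{E:ppintermsofT},
\begin{align*}
pp(a_1,b,n) - pp(a_2,b,n) = \frac{1}{b}\sum_{\nu=1}^{b-1}\left(\zeta_b^{-a_1\nu} - \zeta_b^{-a_2\nu}\right) T_n\left(\zeta_b^\nu\right),
\end{align*}
with the asymptotic formulas of Theorem \ref{T:PPasymp}, and to extract the dominant exponential contribution on the right. For each $\nu$, I would reduce $\zeta_b^\nu$ to a primitive root of unity (so that the coprimality hypothesis of Theorem \ref{T:PPasymp} is met) and then apply the relevant case of the theorem; because $T_n(\overline\zeta) = \overline{T_n(\zeta)}$, it suffices to work with $\nu/b \in (0, \tfrac{1}{2}]$.

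The central step is to determine which indices yield the largest exponential growth. Setting $\theta = \nu/b$, Theorem \ref{T:PPasymp} shows that the exponential rate of $T_n(\zeta_b^\nu)$ equals $\frac{3}{2^{2/3}}\,\mathrm{Re}(\mathrm{Li}_3(e^{2\pi i\theta})^{1/3})$ when $\theta < \theta_{12}$ and $\frac{3}{2^{5/3}}\,\mathrm{Re}(\mathrm{Li}_3(e^{4\pi i\theta})^{1/3})$ when $\theta > \theta_{12}$. Corollary \ref{C:TrilogDominant} (invoked in the definition of $\theta_{12}$) implies that this rate is strictly decreasing on $(0, \tfrac{1}{2}]$, so for $b \ge 3$ the maximum over $\nu \in \{1, \ldots, b-1\}$ is attained exactly at $\nu = 1$ and $\nu = b-1$. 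Since $\tfrac{1}{b} \le \tfrac{1}{3} < \theta_{12}$, both fall under part (1) of Theorem \ref{T:PPasymp}; every other index has a strictly smaller exponential rate, so the combined contribution of $\nu \in \{2, \ldots, b-2\}$ is $O\bigl(e^{(\frac{3}{2^{2/3}}\lambda_1 - c)n^{2/3}}\bigr)$ for some $c = c(b) > 0$.

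Pairing $\nu = 1$ with $\nu = b-1$ via complex conjugation produces the real expression
\begin{align*}
pp(a_1,b,n) - pp(a_2,b,n) = \frac{2}{b}\,\mathrm{Re}\!\left[\left(\zeta_b^{-a_1} - \zeta_b^{-a_2}\right) T_n(\zeta_b)\right] + O\!\left(e^{(\frac{3}{2^{2/3}}\lambda_1 - c)n^{2/3}}\right).
\end{align*}
Substituting the asymptotic of Theorem \ref{T:PPasymp}(1) for $T_n(\zeta_b)$ and using both $(\lambda_1 + i\lambda_2)^{1/2} = \mathrm{Li}_3(\zeta_b)^{1/6}$ and the identity $\frac{2}{2^{1/3}} = 2^{2/3}$, the complex prefactor collects into exactly $B_{a_1,a_2,b}e^{i\alpha_{a_1,a_2,b}}\cdot n^{-2/3}$, which is nonzero because $\zeta_b^{-a_1} \ne \zeta_b^{-a_2}$ and each remaining factor is nonzero. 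The imaginary part of $\frac{3}{2^{2/3}}\mathrm{Li}_3(\zeta_b)^{1/3}n^{2/3} = \frac{3}{2^{2/3}}(\lambda_1 + i\lambda_2)n^{2/3}$ then produces the claimed cosine phase upon taking real parts.

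The main obstacle is verifying that dividing through by $B_{a_1,a_2,b}\,n^{-2/3}e^{\frac{3}{2^{2/3}}\lambda_1 n^{2/3}}$ yields a genuine $o(1)$ error even at the zeros of the cosine. This holds precisely because the subdominant contribution is bounded in absolute value by $e^{-cn^{2/3}}$ times the denominator (up to polynomial factors), and so tends to zero uniformly in $n$, independently of any cancellation in the cosine factor.
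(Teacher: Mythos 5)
Your overall approach — isolate $\nu=1$ and $\nu=b-1$ via conjugation, substitute Theorem \ref{T:PPasymp}(1), collect the prefactor into $B_{a_1,a_2,b}e^{i\alpha_{a_1,a_2,b}}$, and bound the remaining indices by a strictly smaller exponential — is exactly the paper's argument, and your bookkeeping of the constant and the cosine phase is correct.

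However, there is a genuine error in the one step where you dismiss the middle indices. You assert that Corollary \ref{C:TrilogDominant} implies the rate function $L(\theta)=\max_{k\geq 1}\frac{1}{k}\mathrm{Re}(\mathrm{Li}_3(e^{2\pi i k\theta})^{1/3})$ is strictly decreasing on $(0,\tfrac12]$. This is false: by Proposition \ref{P:trilogdecrease}, $L$ is (strictly) decreasing on $(0,\theta_{12}]$ where it equals $f_1$, but on $(\theta_{12},\tfrac12]$ it equals $f_2(\theta)=\tfrac12\mathrm{Re}(\mathrm{Li}_3(e^{4\pi i\theta})^{1/3})$, which (as noted in the proof of Corollary \ref{C:TrilogDominant}) is \emph{increasing} there, since $e^{4\pi i\theta}$ is then sweeping the reflected arc back toward $1$. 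So $L$ has a minimum at $\theta_{12}$ and rises again toward $\theta=\tfrac12$. Your claim therefore does not support the conclusion for ratios $j/b$ (after reducing to lowest terms and reflecting into $(0,\tfrac12]$) that land in $(\theta_{12},\tfrac12]$ — e.g.\ $j/b=12/25$. What actually makes the argument work is the separate assertion in Corollary \ref{C:TrilogDominant} that $L(\tfrac13)>L(\theta)$ for all $\tfrac13<\theta\leq\tfrac12$, combined with monotonicity on $(0,\theta_{12}]$: since $b\geq 3$ forces $1/b\leq 1/3<\theta_{12}$, one gets $L(1/b)\geq L(1/3)>L(\theta)$ for every $\theta\in(1/b,\tfrac12]$. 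With that substitution your proof is correct and matches the paper's.
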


\begin{remark}
Corollary \ref{C:pp-signchanges} can also be generalized to the case of higher differences, i.e. terms $\sum_{a \pmod{b}} v_a pp(a,b,n)$ with $\sum_{a \pmod{b}} v_a = 0$, completely analogously to the case of ordinary partitions in \cite{BFG}.  
\end{remark}

We prove Theorem \ref{T:PPasymp} using an adaptation of the Hardy--Ramanujan Circle Method due to the first two authors and Garnowski \cite{BFG}.  Asymptotics for $T_n(\zeta)$ in most cases with $|\zeta| \neq 1$ were proved by Boyer and Parry \cite{BoyerParry1,BoyerParry2,BoyerParry3} in connection with the zero attractors of these polynomials.  The case of generic products was considered by Parry in \cite{Parry}, again for $|\zeta|\neq 1.$  As in \cite{BFG}, the primary difficulty is the bounding of minor arcs, as classical techniques do not apply when $\zeta$ is complex and has absolute value 1.  The present work demonstrates the wide applicability of the techniques in \cite{BFG} to twisted infinite product generating functions.

We also consider the following MacMahon-type product,
$$
\overline{PP}(\zeta;q)=\sum_{n \geq 0} \mathcal{A}_n(\zeta)q^n:=\prod_{n \geq 1} \left(\frac{1-\zeta q^n}{1-q^n}\right)^n.
$$
For an explicit description of the statistics for plane partitions generated by $\mathcal{A}_n(\zeta)$, we refer the readers to the work of Vuleti\'{c} \cite{Vul1,Vul2}.  If one substitutes $\zeta=-1$, then $\overline{PP}(-1;q)$ counts {\it plane overpartitions} (see \cite{CSV}).  Again, the work of Cesana, Craig and the third author \cite{CCM} implies asymptotic equidistribution in residue classes, and we extend this to secondary terms by proving the following asymptotic formulas.

\begin{theorem}\label{T:planeoverpartitions}
For $\frac{a}{b} \neq 1$, we have
$$
\mathcal{A}_n\left(\zeta_b^a\right) \sim \left(1-\zeta_b^a\right)^{-\frac{1}{12}}\frac{e^{\zeta'(-1)}\left(\zeta(3)-\mathrm{Li}_3\left(\zeta_b^a\right)\right)^{\frac{7}{36}}}{2^{\frac{11}{36}}\sqrt{3\pi}n^{\frac{25}{36}}}\exp\left(\frac{3}{2^{\frac{2}{3}}}\left(\zeta(3)-\mathrm{Li}_3\left(\zeta_b^a\right)\right)^{\frac{1}{3}}  n^{\frac{2}{3}}\right).
$$
\end{theorem}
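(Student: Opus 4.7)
The plan is to apply Wright's variant of the circle method to the Cauchy integral
\begin{equation*}
\mathcal{A}_n(\zeta) = \frac{1}{2\pi i}\oint \overline{PP}(\zeta;q)\, q^{-n-1}\, dq,
\end{equation*}
on the circle $|q|=e^{-s}$ with $s=s(n)$ chosen at the saddle point. Crucially, the numerator $\prod_{n\geq 1}(1-\zeta q^n)^n$ is holomorphic for $|q|<1$, so the dominant singularity of $\overline{PP}(\zeta;q)$ comes solely from $\prod_{n\geq 1}(1-q^n)^{-n}$ and is concentrated at $q=1$. Thus only a single major arc near $q=1$ must be analyzed, which is simpler than the competition between roots of unity encountered in Theorem \ref{T:PPasymp}.

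The first step is to derive the $t\to 0^+$ asymptotic of
\begin{equation*}
\log\overline{PP}(\zeta;e^{-t}) = \sum_{k\geq 1}\frac{1-\zeta^k}{k}\cdot \frac{e^{-kt}}{(1-e^{-kt})^2}
\end{equation*}
via the Mellin transform with kernel $\Gamma(s)\zeta(s-1)\bigl(\zeta(s+1)-\mathrm{Li}_{s+1}(\zeta)\bigr)$. Writing $L:=\zeta(3)-\mathrm{Li}_3(\zeta)$, the pole at $s=2$ contributes $L/t^2$ and the double pole at $s=0$ contributes $\tfrac{1}{12}\log t + \zeta'(-1) - \tfrac{1}{12}\log(1-\zeta) + o(1)$; poles at negative integers yield only negligible algebraic terms.

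The saddle-point analysis proceeds as follows. Setting $\phi(t):=nt + \log\overline{PP}(\zeta;e^{-t})$, the dominant balance $nt + L/t^2$ is stationary at $t_*=(2L/n)^{1/3}$, giving the exponent $3L^{1/3}n^{2/3}/2^{2/3}$ of the theorem. Parameterizing the contour as $t=s+iy$ with $s=\re t_*$ and applying a Gaussian approximation on the major arc $|y|\leq n^{-\alpha}$ for a suitable $\alpha\in(\tfrac{2}{3},1)$ produces the prefactor $1/\sqrt{2\pi\phi''(t_*)}$ with $\phi''(t_*)=6L/t_*^4$, contributing $L^{1/6} n^{-2/3}$ up to a numerical constant. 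Combined with the algebraic factor $t_*^{1/12}$ coming from the $s=0$ pole (which contributes $L^{1/36}n^{-1/36}$) and the constants $e^{\zeta'(-1)}(1-\zeta)^{-1/12}$, this assembles into precisely the stated prefactor.

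The main obstacle is the minor arc bound, namely showing that $|\overline{PP}(\zeta;e^{-s-iy})|$ is exponentially smaller than the main-term scale $\overline{PP}(|\zeta|;e^{-s})$ for $y$ in the minor arc range. Since $|\zeta|=1$, the classical dominance $|\overline{PP}(\zeta;q)|\leq \overline{PP}(|\zeta|;|q|)$ is unavailable, and the twist must be handled carefully. Here I would invoke the strategy of the first two authors and Garnowski \cite{BFG}, already deployed for Theorem \ref{T:PPasymp}: decompose the product index according to the Farey fraction nearest $y/(2\pi)$ and use rational approximation to control each piece. The only new input compared with \cite{BFG} is the additional $\prod_n(1-\zeta q^n)^n$ factor, but since $|1-\zeta q^n|\leq 1+|q|^n$ this contributes at most a tame multiplicative term that is dwarfed by the exponential savings extracted from the denominator, so the minor arc estimate should be a cleaner adaptation of the BFG method than the one needed for Theorem \ref{T:PPasymp}.
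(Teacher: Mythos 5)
Your major-arc analysis is correct and matches the paper's. The Mellin transform setup with kernel $\Gamma(s)\zeta(s-1)\bigl(\zeta(s+1)-\mathrm{Li}_{s+1}(\zeta)\bigr)$, the resulting expansion $L/t^{2}+\tfrac{1}{12}\log t+\zeta'(-1)-\tfrac{1}{12}\log(1-\zeta)+o(1)$ with $L=\zeta(3)-\mathrm{Li}_3(\zeta)$, the saddle $t_*=(2L/n)^{1/3}$, the exponent $3L^{1/3}n^{2/3}/2^{2/3}$, and the assembly of the prefactor powers $2^{-11/36}L^{7/36}n^{-25/36}$ all check out; the paper's proof reaches the same formula by Euler--Maclaurin (Lemma \ref{L:Ehk}) rather than Mellin inversion, but that is a cosmetic difference.

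The minor-arc treatment, however, has a genuine gap. You bound the numerator by $|1-\zeta q^{n}|\leq 1+|q|^{n}$ and call the resulting factor \emph{tame}, but it is not: on the relevant contour $\sum_{n}n\log\bigl(1+|q|^{n}\bigr)\sim \tfrac{3}{4}\zeta(3)/\re(t)^{2}\asymp n^{2/3}$, so this is an \emph{exponentially large} correction on the same scale as the main term. Worse, this bound \emph{discards} exactly the information you need. Near a root of unity $e^{2\pi i h/k}$, the numerator $\prod_n(1-\zeta q^{n})^{n}$ is actually \emph{small}, of order $\exp\bigl(-\mathrm{Li}_3(\zeta^{k})/(k^{3}t^{2})\bigr)$, and it is this cancellation that keeps the minor arcs subdominant; replacing $|1-\zeta q^{n}|$ by $1+|q|^{n}$ turns the needed savings into a loss. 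Concretely, for your bound to beat the main term exponent $3\cdot 2^{-2/3}\re(L^{1/3})n^{2/3}$ even on the $k=2$ arc you would need $\re(L^{1/3})^{3}>\tfrac{7}{8}\zeta(3)$, which holds when $\zeta=-1$ (where $L=\tfrac{7}{4}\zeta(3)$) but fails as soon as $\zeta$ is a root of unity close to $1$, since then $L\to 0$. Relatedly, the claim that holomorphy of the numerator ``concentrates the singularity at $q=1$'' is only a heuristic: one must still compare $\mathrm{Re}\bigl((\zeta(3)-\mathrm{Li}_3(\zeta^{k}))^{1/3}\bigr)/k$ across all $k\geq 1$ to see that $k=1$ dominates, which is Proposition \ref{P:zeta3trilogmax} and rests on the trilogarithm analysis of Section \ref{S:trilog}; it does not follow automatically. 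The paper avoids both pitfalls with the identity $\Log\overline{PP}(\zeta;q)=\Log PP(1;q)-\Log PP(\zeta;q)$, so that $\overline{E}_{h,k}$ is a difference of two error terms already controlled by Lemma \ref{L:Ehk}, and then invokes Proposition \ref{P:zeta3trilogmax} to locate the unique major arc at $h/k=0/1$. You should replace the crude pointwise bound on the numerator with this multiplicative decomposition (or an equally sharp substitute) and supply the comparison argument for the competing arcs.
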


In particular, we have the following asymptotic formula for the number of plane overpartitions of size $n$, $\overline{pp}(n):=\mathcal{A}_n(-1)$.

\begin{corollary}
As $n \to \infty,$ we have
$$
\overline{pp}(n) \sim  \frac{e^{\zeta'(-1)}(7\zeta(3))^{\frac{7}{36}}}{2^{\frac{7}{9}}\sqrt{3\pi}n^{\frac{25}{36}}} \mathrm{exp}\left(\frac{3}{2^{\frac{4}{3}}}(7\zeta(3))^{\frac{1}{3}}n^{\frac{2}{3}}\right).
$$
\end{corollary}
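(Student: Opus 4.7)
The plan is to obtain this as an immediate specialization of Theorem \ref{T:planeoverpartitions} at $\zeta_b^a=-1$ (i.e.\ $a=1$, $b=2$), so essentially no new analytic work is required; the task is reduced to a careful bookkeeping of the constants.

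First I would recall the standard trilogarithm evaluation
\[
\mathrm{Li}_3(-1)=\sum_{n\geq 1}\frac{(-1)^n}{n^3}=-\bigl(1-2^{1-3}\bigr)\zeta(3)=-\tfrac{3}{4}\zeta(3),
\]
so that the argument of the exponential and the polynomial factor in Theorem \ref{T:planeoverpartitions} involve
\[
\zeta(3)-\mathrm{Li}_3(-1)=\zeta(3)+\tfrac{3}{4}\zeta(3)=\tfrac{7}{4}\zeta(3).
\]
Setting $\zeta_b^a=-1$, we have $1-\zeta_b^a=2$, hence $(1-\zeta_b^a)^{-1/12}=2^{-1/12}$.

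Next, I would substitute these values into the formula of Theorem \ref{T:planeoverpartitions} and collect the powers of $2$. The exponent becomes
\[
\frac{3}{2^{2/3}}\bigl(\tfrac{7}{4}\zeta(3)\bigr)^{1/3}n^{2/3}=\frac{3}{2^{2/3}\cdot 2^{2/3}}(7\zeta(3))^{1/3}n^{2/3}=\frac{3}{2^{4/3}}(7\zeta(3))^{1/3}n^{2/3},
\]
matching the exponential in the statement. For the prefactor, a similar computation gives $(\tfrac{7}{4}\zeta(3))^{7/36}=2^{-7/18}(7\zeta(3))^{7/36}$, and the three $2$-powers combine as
\[
-\tfrac{1}{12}-\tfrac{7}{18}-\tfrac{11}{36}=-\tfrac{3+14+11}{36}=-\tfrac{28}{36}=-\tfrac{7}{9},
\]
yielding the constant $2^{-7/9}$ in front. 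All remaining factors $e^{\zeta'(-1)}$, $\sqrt{3\pi}$, and $n^{-25/36}$ transfer verbatim from Theorem \ref{T:planeoverpartitions}, completing the derivation.

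There is essentially no obstacle here: the only potential source of error is arithmetic, so the step requiring the most care is verifying the combined $2$-power $2^{-7/9}$ and confirming the identity $\mathrm{Li}_3(-1)=-\tfrac{3}{4}\zeta(3)$ via the Dirichlet eta function $\eta(s)=(1-2^{1-s})\zeta(s)$.
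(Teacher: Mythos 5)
Your proposal is correct and takes exactly the same route as the paper, which presents the corollary as an immediate specialization of Theorem \ref{T:planeoverpartitions} at $\zeta_b^a=-1$. The evaluation $\mathrm{Li}_3(-1)=-\tfrac{3}{4}\zeta(3)$ via the eta function, the resulting $\zeta(3)-\mathrm{Li}_3(-1)=\tfrac{7}{4}\zeta(3)$, and the bookkeeping of $2$-powers $-\tfrac{1}{12}-\tfrac{7}{18}-\tfrac{11}{36}=-\tfrac{7}{9}$ all check out.
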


We structure the article as follows.  Section \ref{S:prelims} recalls a few classical results useful for asymptotic analysis.  In Section \ref{S:trilog}, we make a careful study of the trilogarithm on the unit circle; this is needed to identify the major arcs in the circle method.  In Section \ref{S:proofmain}, we use the techniques of the first two authors and Garnowski \cite{BFG} to prove Theorems \ref{T:PPasymp} and \ref{T:planeoverpartitions}.  Concluding remarks regarding further extensions to generic product generating functions are given in Section \ref{S:Outlook}.

\section*{Acknowledgements}  We are grateful to Daniel Parry for his commentary on the proof of Lemma \ref{L:argzeta3trilog}.  We thank Giulia Cesana, William Craig and Taylor Garnowski for helpful feedback that greatly improved the exposition.

The first author is partially supported by the SFB/TRR 191 ``Symplectic Structures in Geometry, Algebra and Dynamics'', funded by the DFG (Projektnummer 281071066 TRR 191). The second author is partially supported by the Alfried Krupp prize.  The research of the third author conducted for this paper is supported by the Pacific Institute for the Mathematical Sciences (PIMS). The research and findings may not reflect those of the Institute

\section{Preliminaries}\label{S:prelims}

\subsection{Major arcs}

This section contains results we use in the course of evaluating the major arcs in the circle method.  We first recall Laplace's method.

\begin{theorem}[Section 1.1.5 of \cite{Pinsky}]\label{T:laplacemethod}
   Let $A,B: [a,b]\to \mathbb{C}$ be continuous functions. Suppose $x\neq x_0 \in [a,b]$ such that $\mathrm{Re}(B(x))<\mathrm{Re}(B(x_0)),$ and that 
   \begin{align*}
       \lim_{x\to x_0}\frac{B(x)-B(x_0)}{(x-x_0)^2} = -k\in \mathbb{C},
   \end{align*}
   with $\mathrm{Re}(k)>0.$ Then as $t \to \infty$
   \begin{align*}
       \int^b_{a}A(x)e^{tB(x)}dx = e^{tB(x_0)}\left(A(x_0)\sqrt{\frac{\pi}{tk}}+o\left(\frac{1}{\sqrt{t}}\right)\right).
   \end{align*}
\end{theorem}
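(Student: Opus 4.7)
The plan is the standard two-step approach for Laplace's method: localize the integral to a small neighborhood of the maximum $x_0$, then rescale by $\sqrt{t}$ and appeal to dominated convergence. Begin by factoring out $e^{tB(x_0)}$; everything then reduces to proving $\int_a^b A(x) e^{t(B(x)-B(x_0))}\,dx \sim A(x_0)\sqrt{\pi/(tk)}$. Fix a small $\delta>0$ and split $[a,b]$ into the peak piece $I_\delta := [x_0-\delta, x_0+\delta]\cap[a,b]$ and its complement $J_\delta$.

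On $J_\delta$, the function $\re(B)$ is continuous and, by hypothesis, attains its strict maximum on $[a,b]$ only at $x_0$. Compactness yields $\eta=\eta(\delta)>0$ with $\re(B(x)) \leq \re(B(x_0)) - \eta$ throughout $J_\delta$, so
\begin{equation*}
    \left| \int_{J_\delta} A(x) e^{t(B(x)-B(x_0))}\,dx \right| \leq (b-a)\,\sup_{[a,b]}|A| \cdot e^{-\eta t} = o(t^{-1/2}).
\end{equation*}

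For the peak piece, substitute $u = \sqrt{t}(x-x_0)$, so the integral becomes
\begin{equation*}
    \frac{1}{\sqrt{t}}\int A\!\left(x_0+\tfrac{u}{\sqrt{t}}\right) \exp\!\left(t \left(B\!\left(x_0+\tfrac{u}{\sqrt{t}}\right)-B(x_0)\right) \right) du,
\end{equation*}
integrated over the rescaled interval of length $O(\sqrt{t})$. The hypothesis gives the pointwise convergence $t(B(x_0+u/\sqrt{t})-B(x_0)) \to -ku^2$, and continuity gives $A(x_0+u/\sqrt{t}) \to A(x_0)$. To swap limit and integral I would shrink $\delta$ so that $|B(x)-B(x_0)+k(x-x_0)^2| \leq \tfrac{\re(k)}{2}(x-x_0)^2$ on $[x_0-\delta, x_0+\delta]$, which produces the bound $\re(t(B(x_0+u/\sqrt{t})-B(x_0))) \leq -\tfrac{\re(k)}{2}u^2$ and hence the uniform integrable envelope $\sup|A| \cdot e^{-\re(k)u^2/2}$. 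Dominated convergence then delivers
\begin{equation*}
    \sqrt{t}\int_{I_\delta} A(x)e^{t(B(x)-B(x_0))}\,dx \longrightarrow A(x_0)\int_{-\infty}^{\infty} e^{-ku^2}\,du = A(x_0)\sqrt{\frac{\pi}{k}},
\end{equation*}
where the complex Gaussian integral is evaluated by rotating the contour onto the ray $v = ue^{i(\arg k)/2}$, justified by Cauchy's theorem and the decay in the intervening sector provided by $\re(k)>0$.

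The main obstacle is the uniform quadratic envelope needed for dominated convergence: the hypothesis only supplies pointwise second-order behavior at $x_0$, and one must upgrade this to a genuine local quadratic upper bound on $\re(B(x)-B(x_0))$ over a fixed neighborhood in order to control the rescaled integrand uniformly in $t$. Once this local bound is in hand, the exponentially small off-peak estimate and the Gaussian contour rotation are routine, and the pieces combine to give the stated asymptotic.
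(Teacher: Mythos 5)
The paper does not prove this theorem; it quotes it directly from Pinsky's text, so there is no internal proof to compare against. Your argument is the standard one and is essentially correct: localize, rescale by $\sqrt{t}$, apply dominated convergence with a quadratic envelope, and evaluate the rotated Gaussian.

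Two small remarks. First, the ``main obstacle'' you flag at the end is not actually an obstacle: the hypothesis $\lim_{x\to x_0}\frac{B(x)-B(x_0)}{(x-x_0)^2}=-k$ \emph{is}, by the very definition of a limit, a uniform statement on a small neighborhood --- given $\varepsilon=\tfrac{\re(k)}{2}$ there is a $\delta>0$ with $\bigl|B(x)-B(x_0)+k(x-x_0)^2\bigr|\le\tfrac{\re(k)}{2}(x-x_0)^2$ for $0<|x-x_0|<\delta$, which is exactly the local quadratic upper bound you use. So the step you carried out is complete and needs no further upgrade; the closing paragraph suggests lingering doubt about something that is already resolved. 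Second, both the theorem as stated and your proof silently assume $x_0$ lies in the \emph{interior} of $[a,b]$: if $x_0$ were an endpoint, the rescaled integral would converge to a half-Gaussian and the constant would be $\tfrac{1}{2}\sqrt{\pi/k}$ rather than $\sqrt{\pi/k}$. This is harmless for the paper's application (where $x_0=0$ is the center of $[-c,c]$), but worth stating explicitly if the theorem is to be read on its own.
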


We recall classical Euler--Maclaurin summation.

\begin{theorem} [p.\@ 66 of \cite{IwanKow}]\label{T:EulerMacclassic}
    Let $\{x\}:=x - \lfloor x \rfloor$ denote the fractional part of $x$. For $N \in \mathbb{N}$ and \newline $f:[1, \infty) \to \mathbb{C}$ a continuously differentiable function, we have
    $$
    \sum_{1 \leq n \leq N} f(n)=\int_1^N f(x)dx+\frac{1}{2}(f(N)+f(1))+\int_1^N f'(x)\left(\{x\}-\frac{1}{2}\right)dx.
    $$
\end{theorem}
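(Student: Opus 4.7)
The plan is to derive the identity by working interval by interval and telescoping. The key observation is that on each unit interval $[n,n+1]$ with $n \in \mathbb{N}$, the fractional part is simply $\{x\}=x-n$, so $\{x\}-\tfrac12 = x-n-\tfrac12$ is a linear function whose antiderivative is a quadratic vanishing in a convenient way at the endpoints. This suggests integrating the remainder integral by parts on each $[n,n+1]$.

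More concretely, I would first split
\begin{equation*}
\int_1^N f'(x)\left(\{x\}-\tfrac12\right)dx = \sum_{n=1}^{N-1} \int_n^{n+1} f'(x)\left(x-n-\tfrac12\right)dx.
\end{equation*}
Applying integration by parts on each summand with $u(x)=x-n-\tfrac12$ and $dv = f'(x)\,dx$ yields the boundary contribution $\bigl[f(x)(x-n-\tfrac12)\bigr]_n^{n+1} = \tfrac12 f(n+1) + \tfrac12 f(n)$ together with $-\int_n^{n+1} f(x)\,dx$. Summing these over $n=1,\dots,N-1$, the boundary contributions telescope to
\begin{equation*}
\sum_{n=1}^{N-1} \tfrac12\bigl(f(n+1)+f(n)\bigr) = \sum_{n=1}^{N} f(n) - \tfrac12\bigl(f(1)+f(N)\bigr),
\end{equation*}
while the integrals assemble into $-\int_1^N f(x)\,dx$. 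Rearranging produces the stated formula.

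Since $f$ is merely assumed to be continuously differentiable, each integration by parts step is fully justified without appeal to higher regularity; thus there is no real analytic obstacle. The only point that requires a moment of care is the bookkeeping of the endpoint values $\tfrac12 f(1)$ and $\tfrac12 f(N)$, which come out correctly because the weights $\{x\}-\tfrac12$ take the values $\pm\tfrac12$ at the two ends of each unit subinterval. As this is a standard identity quoted directly from \cite{IwanKow}, no new ingredients are needed beyond this one-line integration by parts.
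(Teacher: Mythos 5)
Your proof is correct, and it is the standard textbook derivation of the classical Euler--Maclaurin formula. The paper itself does not prove this statement---it simply cites it to Iwaniec--Kowalski, p.\ 66---so there is no in-paper argument to compare against; your integration-by-parts-on-unit-intervals approach is exactly what one finds in the cited reference and elsewhere. The bookkeeping is right: on $[n,n+1]$ you correctly replace $\{x\}-\tfrac12$ by $x-n-\tfrac12$ (the mismatch at the single endpoint $x=n+1$ is harmless for the integral), the boundary term $\bigl[f(x)(x-n-\tfrac12)\bigr]_n^{n+1}=\tfrac12 f(n+1)+\tfrac12 f(n)$ is computed correctly, and the telescoping sum $\sum_{n=1}^{N-1}\tfrac12\bigl(f(n+1)+f(n)\bigr)=\sum_{n=1}^{N}f(n)-\tfrac12\bigl(f(1)+f(N)\bigr)$ together with $-\int_1^N f$ rearranges to the stated identity. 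The $C^1$ hypothesis is exactly what is needed to justify each integration by parts, so there is no gap.
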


We also need the following lemma that may be proved with elementary calculus.
\begin{lemma}\label{L:cosinemax}
Let $a \in \left(-\frac{\pi}{2},\frac{\pi}{2}\right)$.  Then $x=\frac{a}{3}$ is the unique maximum of the function $\cos(a-2x)\cos^2(x)$ for $x\in \left(-\frac{\pi}{2},\frac{\pi}{2}\right)$. 
\end{lemma}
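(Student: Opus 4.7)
The plan is to compute the derivative, factor it into a clean product using a sum-to-product identity, locate the three critical points in $(-\pi/2,\pi/2)$, and compare the values of $f(x):=\cos(a-2x)\cos^2(x)$ at the critical points and at the boundary.

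First I would differentiate directly:
\begin{align*}
f'(x) &= 2\sin(a-2x)\cos^2(x) - 2\cos(a-2x)\cos(x)\sin(x) \\
&= 2\cos(x)\bigl[\sin(a-2x)\cos(x) - \cos(a-2x)\sin(x)\bigr] = 2\cos(x)\sin(a-3x),
\end{align*}
using the identity $\sin(A)\cos(B)-\cos(A)\sin(B)=\sin(A-B)$ with $A=a-2x$, $B=x$. On the open interval $(-\pi/2,\pi/2)$ the factor $\cos(x)$ is strictly positive, so the critical points are exactly the solutions of $\sin(a-3x)=0$, i.e. $x=(a-k\pi)/3$ for $k\in\mathbb{Z}$. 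Since $a/\pi\in(-1/2,1/2)$, the condition $-\pi/2<(a-k\pi)/3<\pi/2$ forces $k\in\{-1,0,1\}$, giving the three critical points $x_0=a/3$ and $x_{\pm}=(a\mp\pi)/3$.

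Next I would distinguish them. The second derivative is
\[
f''(x) = -2\sin(x)\sin(a-3x) - 6\cos(x)\cos(a-3x).
\]
At $x_0=a/3$, the argument $a-3x$ equals $0$, so $f''(a/3)=-6\cos(a/3)<0$ (as $a/3\in(-\pi/6,\pi/6)$), confirming a strict local maximum with value $f(a/3)=\cos^3(a/3)>0$. At the outer critical points $x_{\pm}$, the argument $a-3x$ equals $\pm\pi$, so $f''(x_{\pm})=6\cos(x_{\pm})>0$ (since $x_{\pm}\in(-\pi/2,\pi/2)$), making them strict local minima.

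Finally I would rule out any competition from the boundary or from the local minima. At the endpoints $\cos^2(x)\to 0$, so $f(x)\to 0$ as $x\to\pm\pi/2$. Moreover, at $x_{+}=(a+\pi)/3\in(\pi/6,\pi/2)$ the quantity $a-2x_{+}=(a-2\pi)/3$ lies in $(-5\pi/6,-\pi/2)$, so $\cos(a-2x_{+})<0$ and hence $f(x_{+})<0$; symmetrically $f(x_{-})<0$. Since $f$ is continuous on the closed interval $[-\pi/2,\pi/2]$ and its only interior critical value that is positive is $\cos^3(a/3)>0$, the function attains its unique maximum at $x=a/3$. The argument is entirely elementary, so I do not expect any real obstacle; the only point requiring care is the sum-to-product manipulation that produces the clean factorization $f'(x)=2\cos(x)\sin(a-3x)$, after which everything follows from a standard critical-point analysis.
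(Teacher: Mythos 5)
Your proof is correct. The paper does not actually give a proof of this lemma; it states only that it ``may be proved with elementary calculus,'' and your argument (factoring $f'(x)=2\cos(x)\sin(a-3x)$, locating the three interior critical points, classifying them via $f''$, and checking that the outer two critical values and the boundary values are non-positive while $f(a/3)=\cos^3(a/3)>0$) is precisely the elementary-calculus argument that remark alludes to. One small notational slip: you define $x_\pm=(a\mp\pi)/3$ but later write $x_+=(a+\pi)/3$, so the sign convention flips mid-proof; this does not affect the reasoning since you treat both outer critical points symmetrically.
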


\subsection{Minor arcs}

This section contains lemmas we use to bound the minor arcs in the circle method.  The following is known as Abel-partial summation.
  \begin{proposition}[p.\@ 3 of \cite{Tenenbaum}]\label{P:Abelpartialsummation}
Let $N \in \N_0$ and $M \in \N$. For sequences $\{a_n\}_{n \geq N}$, $\{b_n\}_{n \geq N}$ of complex numbers, if $A_n:=\sum_{N < m \leq n} a_m,$ 
  $$
  \sum_{N < n \leq N+M} a_nb_n=A_{N+M}b_{N+M}+\sum_{N < n < N+M} A_n(b_n-b_{n+1}).
  $$
  \end{proposition}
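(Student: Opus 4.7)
The plan is to derive the identity by the standard ``summation by parts'' manipulation, exploiting the telescoping property of partial sums. Concretely, adopting the convention $A_N:=\sum_{N<m\leq N}a_m=0$, I would begin by noting that for every $n$ with $N<n\leq N+M$ one has $a_n = A_n - A_{n-1}$. This is the only non-trivial input: it lets us replace the sequence $a_n$, which we do not directly control, by differences of the sequence $A_n$, which appears on the right-hand side of the claimed formula.

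Next I would substitute this identity into $\sum_{N<n\leq N+M} a_n b_n$ and split the resulting expression into two sums:
\[
\sum_{N<n\leq N+M}(A_n - A_{n-1})b_n \;=\; \sum_{N<n\leq N+M} A_n b_n \;-\; \sum_{N<n\leq N+M} A_{n-1} b_n.
\]
In the second sum I would reindex $m=n-1$, so that it becomes $\sum_{N\leq m\leq N+M-1} A_m b_{m+1}$; since $A_N=0$ by convention, the $m=N$ term vanishes and the sum runs effectively over $N<m\leq N+M-1$.

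Now the two sums share almost the same range of indices. I would peel off the top term $n=N+M$ of the first sum (producing the boundary contribution $A_{N+M}b_{N+M}$) and combine the remaining pieces, both of which run over $N<n<N+M$. This yields
\[
\sum_{N<n\leq N+M} a_n b_n \;=\; A_{N+M} b_{N+M} + \sum_{N<n<N+M} A_n\bigl(b_n - b_{n+1}\bigr),
\]
which is exactly the claimed identity. There is no genuine analytic obstacle here; the only thing to be careful about is the bookkeeping of endpoints, namely verifying that $A_N=0$ so the reindexed sum really starts at $m=N+1$, and that peeling off $n=N+M$ produces precisely the asserted boundary term while leaving the remaining ranges matched. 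Once the indexing is aligned, the identity is immediate.
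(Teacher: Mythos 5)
Your argument is correct. The paper cites this formula from Tenenbaum without giving a proof, so there is no paper proof to compare against; your derivation is the standard summation-by-parts computation (write $a_n=A_n-A_{n-1}$, split, reindex, use $A_N=0$, peel off the top term), and all the endpoint bookkeeping checks out.
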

  
  We use a version of Euler-Maclaurin summation that applies in cones when the summand has a simple pole at $0$.  Recall that the Bernoulli polynomials $B_n(x)$ may be defined through the power series
  $$
  \sum_{n \geq 0} \frac{B_n(x)t^n}{n!}=\frac{te^{tx}}{e^t-1}.
  $$
\begin{theorem}[Theorem 1.3 of \cite{Eulermacbringmann}]\label{T:EulerMac}
Let $D_\vartheta:= \{re^{i\phi}: r\geq 0\; \textnormal{and}\; |\phi|\leq \vartheta\}$ and suppose that $0\leq \vartheta< \frac{\pi}{2}$. Let $f:\mathbb{C}\to \mathbb{C}$ be holomorphic in a domain containing $D_\vartheta$ with the exception of a simple pole at the origin, and assume that $f$ and all of its derivatives are of sufficient decay in $D_\vartheta$. If $f(w) = \sum_{n\geq -1}b_nw^n$ near $0$, then for $a\in \mathbb{R}\setminus \left(-\mathbb{N}_0\right)$, and $N\in \mathbb{N}_0$, uniformly as $w\to 0$ in $D_\vartheta$,
\begin{align*}
    \sum_{m\geq 0}f(w(m+a))=& \frac{b_{-1}\Log(w)}{w}+\frac{b_{-1}C_a}{w}+\frac{1}{w}\int^{\infty}_{0}\left(f(x)-\frac{b_{-1}e^{-x}}{x}\right)\\
    &-\sum^{N-1}_{n=0}\frac{B_{n+1}(a)b_n}{n+1}w^n+O_N\left(w^N\right),
\end{align*}
where $$ C_a:= (1-a)\sum_{m\geq 0} \frac{1}{(m+a)(m+1)}.$$
\end{theorem}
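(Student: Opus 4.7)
The plan is to reduce the claim to classical Euler--Maclaurin (Theorem \ref{T:EulerMacclassic}) by separating the pole of $f$ at the origin from its regular part. I would set
\begin{align*}
    g(x) := f(x) - \frac{b_{-1}e^{-x}}{x},
\end{align*}
so that the expansion $e^{-x}/x = 1/x - 1 + x/2 - \ldots$ cancels the residue and $g$ extends holomorphically to a neighbourhood of $D_\vartheta$ while inheriting the decay of $f$ at infinity thanks to the $e^{-x}$ factor. Its Taylor coefficients at $0$ are $c_n = b_n + (-1)^n b_{-1}/(n+1)!$. This lets us split the target sum as
\begin{align*}
    \sum_{m \geq 0} f(w(m+a)) = \sum_{m \geq 0} g(w(m+a)) + \frac{b_{-1}}{w}\sum_{m\geq 0}\frac{e^{-w(m+a)}}{m+a},
\end{align*}
with the first sum holomorphic and the second a Lerch-type sum; both are amenable to standard asymptotic techniques.

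For the holomorphic sum I would apply Euler--Maclaurin on the ray through $w$; by holomorphy and decay the implicit path can be deformed to the positive real axis, and iterated integration by parts against the Bernoulli-polynomial kernel gives
\begin{align*}
    \sum_{m \geq 0} g(w(m+a)) = \frac{1}{w}\int_0^\infty g(x)\,dx - \sum_{n=0}^{N-1}\frac{B_{n+1}(a)\,c_n}{n+1}\,w^n + O_N(w^N),
\end{align*}
the error being controlled by $\int_0^\infty |g^{(N)}(x)|\,dx$, which is finite by the assumed decay of $f$ and its derivatives in $D_\vartheta$. Note that $\frac{1}{w}\int_0^\infty g(x)\,dx$ already matches the regularised integral in the claim.

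For the Lerch-type piece $S(w,a):=\sum_{m\ge 0}\frac{e^{-w(m+a)}}{m+a}$ I would use its Mellin--Barnes representation $S(w,a)=\frac{1}{2\pi i}\int_{(c)}\Gamma(s)\,\zeta(s+1,a)\,w^{-s}\,ds$ for $c>0$. Shifting the contour to $\mathrm{Re}(s)=-N-\frac{1}{2}$ and picking up residues yields a contribution of the form $b_{-1}\Log(w)/w + b_{-1}\widetilde C_a/w$ from the double pole at $s=0$ (where $\Gamma$ and $\zeta(\cdot,a)$ each contribute a simple pole, and $\widetilde C_a$ is expressible through $\psi(a)$), together with simple-pole residues at $s=-n$ for $1\le n\le N$ that evaluate via $\zeta(1-n,a)=-B_n(a)/n$ to precisely the corrections needed to cancel the $(-1)^n b_{-1}/(n+1)!$ parts of the $c_n$'s.

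Assembling the two expansions, the Bernoulli contributions collapse to $-\sum_n B_{n+1}(a) b_n w^n/(n+1)$ while the logarithmic and integral terms appear with the stated coefficients. The principal obstacle is the final bookkeeping step, and in particular identifying the constant $\widetilde C_a$ produced by the residue at $s=0$ with the combinatorial constant $C_a=(1-a)\sum_{m\ge 0}\frac{1}{(m+a)(m+1)}$ appearing in the theorem; this should follow by rewriting $\widetilde C_a$ in terms of $\psi(a)$ and recognising $C_a$ as a telescoping rearrangement of the same digamma difference. A subsidiary (but essentially routine) matter is justifying the Mellin and Euler--Maclaurin contour deformations uniformly for $w$ in the closed cone $D_\vartheta$ with $0\le\vartheta<\frac{\pi}{2}$.
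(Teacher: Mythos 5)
This theorem is imported from \cite{Eulermacbringmann} and is not proved in the present paper, so there is no internal argument to compare against; I therefore assess your sketch on its own. Your strategy --- peel off the singular part so that $g(x) := f(x) - b_{-1}e^{-x}/x$ is regular on the cone, apply the pole-free Euler--Maclaurin expansion (Theorem~1.2 of that reference) to $g$, and handle the Lerch-type sum $S(w,a) = \sum_{m\ge 0}e^{-w(m+a)}/(m+a)$ by Mellin--Barnes --- is sound and is essentially the decomposition underlying the original proof. The cancellations you anticipate do occur: with $\mathrm{Res}_{s=-n}\Gamma(s) = (-1)^n/n!$ and $\zeta(1-n,a) = -B_n(a)/n$, the residue at $s=-(n+1)$ in $b_{-1}S(w,a)/w$ removes exactly the $b_{-1}(-1)^n/(n+1)!$ portion of $c_n$, so the Bernoulli contribution collapses to $-\sum_n B_{n+1}(a)\,b_n\,w^n/(n+1)$ as claimed.

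Two points merit more care. First, the sign of the logarithm. Expanding $\Gamma(s)\zeta(s+1,a)w^{-s}$ about the double pole at $s=0$,
\[
\Bigl(\tfrac{1}{s}-\gamma+\cdots\Bigr)\Bigl(\tfrac{1}{s}-\psi(a)+\cdots\Bigr)\bigl(1-s\Log(w)+\cdots\bigr),
\]
has residue $-\Log(w)-\gamma-\psi(a)$, and since $C_a=\sum_{m\ge0}\bigl(\tfrac{1}{m+a}-\tfrac{1}{m+1}\bigr)=-\gamma-\psi(a)$, the $s=0$ pole contributes $b_{-1}\bigl(-\Log(w)+C_a\bigr)/w$. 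The sanity check $S(w,1)=-\log(1-e^{-w})\sim -\Log(w)$ confirms the minus sign, and indeed the leading term of $\sum_m f(w(m+a))$ for $b_{-1}>0$ and $w\to 0^+$ is large and positive, not large and negative. Your sketch asserts $+\Log(w)/w$ to match the displayed statement, but the statement as transcribed here appears to carry a sign typo (it should be $-\Log(w)$, equivalently $\Log(1/w)$); you should derive the sign rather than fit the target. Second, to obtain an $O_N(w^N)$ error in $\sum_m f(w(m+a))$ you must collect residues through $s=-(N+1)$, pushing the Mellin contour to $\mathrm{Re}(s)=-N-\tfrac32$ say, because the prefactor $1/w$ shifts every power of $w$ down by one; stopping at $\mathrm{Re}(s)=-N-\tfrac12$ as written gives only $O(w^{N-1/2})$. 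These are bookkeeping corrections rather than strategic ones; the route itself, including the reduction of $\widetilde C_a$ to $C_a$ via the digamma partial-fraction identity, is the right one.
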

  
  The following bound for holomorphic functions is a consequence of the mean value theorem.
  
  \begin{lemma} \label{L:DifferenceEstimate} Let $f : U \to \C$ be a holomorphic function and $\overline{B_r(c)} \subset U$ a compact disk. Then, for all $a,b \in B_r(c)$ with $a \not= b$, we have 
\begin{align*}
    \left| f(b) - f(a)\right| \leq \sup_{|z - c|=r} |f'(z)| |b-a|.
\end{align*}

\end{lemma}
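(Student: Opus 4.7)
The plan is to reduce this estimate to the fundamental theorem of calculus (the complex analogue of the mean value theorem) combined with the maximum principle applied to the derivative. The first step is to observe that $B_r(c)$ is convex, so for any $a, b \in B_r(c)$ the straight line segment $\gamma(t) := a + t(b - a)$, $t \in [0,1]$, lies entirely in $B_r(c) \subset U$. Thus $f \circ \gamma$ is well-defined and complex-differentiable on a neighborhood of $[0,1]$, and in particular $t \mapsto f(\gamma(t))$ is a continuously differentiable $\C$-valued function of the real variable $t$.

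Next, I would apply the fundamental theorem of calculus along this segment and use the chain rule to write
\begin{align*}
f(b) - f(a) = \int_0^1 \frac{d}{dt} f(\gamma(t)) \, dt = (b-a) \int_0^1 f'(a + t(b-a)) \, dt.
\end{align*}
Taking absolute values inside the integral and crudely pulling out the supremum over the compact disk gives
\begin{align*}
|f(b) - f(a)| \leq |b-a| \cdot \sup_{z \in \overline{B_r(c)}} |f'(z)|.
\end{align*}

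To conclude, I would invoke the maximum modulus principle applied to $f'$, which is itself holomorphic on $U$ and continuous on the compact disk $\overline{B_r(c)} \subset U$. This replaces the supremum over the closed disk by the supremum over its boundary, yielding $\sup_{z \in \overline{B_r(c)}} |f'(z)| = \sup_{|z - c| = r} |f'(z)|$, which matches the right-hand side in the statement.

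There is essentially no real obstacle here: the argument is a standard textbook computation. The only hypothesis that must be used carefully is the inclusion $\overline{B_r(c)} \subset U$, which ensures both that the integration path $\gamma$ stays in the domain of holomorphy of $f$ and that the maximum principle may be legitimately applied to $f'$ on a compact subset of $U$.
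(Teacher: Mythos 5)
Your proof is correct and is precisely the argument the paper alludes to without writing out (the paper states the lemma as ``a consequence of the mean value theorem'' and gives no proof). Integrating $f'$ along the straight segment and then applying the maximum modulus principle to $f'$ on $\overline{B_r(c)}$ to move the supremum to the boundary circle is exactly the intended route.
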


We apply this lemma to the function
\begin{align} \label{D:phia}
\phi_a(w):=\frac{e^{-w-aw}}{(1-e^{-w})^2} + \frac{ae^{-aw}}{1-e^{-w}} - \frac{1}{w^2},
\end{align}
where $0 \leq a \leq 1$ is a real parameter. Note that $\phi_a$ is holomorphic at 0 and in the cone $|\Arg(w)| \leq \frac{\pi}{2}- \eta,$ for any $\eta>0.$

\begin{lemma} \label{SmallRadiusEstimate} Let $x$ be a complex number with positive imaginary part and $|x| \leq 1$. Then there is a constant $c > 0$ independent from $a$ and $x$, such that for all $m \leq \frac{1}{|x|}$ we have
\begin{align*}
\left| \phi_a(xm) - \phi_a(x(m+1))\right| \leq c|x|.
\end{align*}
\end{lemma}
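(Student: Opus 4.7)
The plan is to apply Lemma~\ref{L:DifferenceEstimate} to $f = \phi_a$ on a disk containing both $xm$ and $x(m+1)$ on which $\phi_a$ is holomorphic. The key observation is that the two evaluation points are at distance exactly $|x|$, so once such a disk can be found with uniformly bounded radius, the difference $|\phi_a(xm)-\phi_a(x(m+1))|$ is immediately controlled by $\bigl(\sup|\phi_a'|\bigr)\cdot|x|$, and only a uniform bound on the supremum remains.

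Concretely, I would center the disk at the midpoint $z_0 := x(m+\tfrac12)$ and take radius $r := |x|$. Then $xm$ and $x(m+1)$ both lie in $B_r(z_0)$, each at distance $|x|/2 < r$ from $z_0$, and the hypotheses $m\le 1/|x|$ and $|x|\le 1$ force
\begin{align*}
|z| \leq |z_0|+r = |x|\left(m+\tfrac{3}{2}\right) \leq 1 + \tfrac{3}{2}|x| \leq \tfrac{5}{2}
\end{align*}
for every $z \in \overline{B_r(z_0)}$. The rational-exponential terms of $\phi_a$ are meromorphic with poles only on $2\pi i\mathbb{Z}$, and since $\tfrac{5}{2} < 2\pi$ the disk avoids $2\pi i\mathbb{Z}\setminus\{0\}$. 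The origin itself may lie in the disk, but the role of the subtracted $1/w^2$ in the definition of $\phi_a$ is precisely to cancel the principal Laurent coefficients of the first two summands at $w=0$ --- a short direct computation --- so $\phi_a$ extends holomorphically to $w=0$. Thus $\phi_a$ is holomorphic on $\overline{B_r(z_0)}$, and Lemma~\ref{L:DifferenceEstimate} gives
\begin{align*}
\bigl|\phi_a(xm) - \phi_a(x(m+1))\bigr| \leq \Bigl(\sup_{|z-z_0|=r}|\phi_a'(z)|\Bigr)\,|x|.
\end{align*}

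To finish, the circle $|z-z_0|=r$ sits inside the fixed compact set $K := \{z\in\mathbb{C}:|z|\leq \tfrac{5}{2}\}$, and by the holomorphy just verified, the map $(z,a)\mapsto\phi_a'(z)$ is jointly continuous on $K\times[0,1]$. Hence $M := \sup_{(z,a)\in K\times[0,1]}|\phi_a'(z)|$ is finite and independent of $a$, $x$, and $m$, and taking $c := M$ yields the claimed inequality. The only mildly delicate step is establishing the removable singularity of $\phi_a$ at $w=0$ with continuous $a$-dependence; once this Laurent-expansion bookkeeping is carried out, the rest reduces to standard compactness.
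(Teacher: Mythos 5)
Your proposal is correct and follows essentially the same route as the paper: it invokes Lemma~\ref{L:DifferenceEstimate} on a disk of radius $|x|$ whose closure stays inside $\{|z|\le 5/2\}\subset\{|z|<2\pi\}$ (where $\phi_a$ is holomorphic once the removable singularity at $0$ is filled in), and then bounds $\sup|\phi_a'|$ uniformly. The paper phrases the final uniform bound via the maximum modulus principle while you use joint continuity on the compact set $K\times[0,1]$, but these are cosmetic variants of the same step.
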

\begin{proof} One uses Lemma \ref{L:DifferenceEstimate} and the maximum modulus principle. The details are analogous to Lemma 2.12 in \cite{BFG}.
\end{proof}

\begin{lemma} \label{LargeRadiusEstimate} Let $x$ be a complex number with positive imaginary part. Then all $m > \frac{1}{|x|}$ we have
\begin{align*}
\left| \phi_a(xm) - \phi_a(x(m+1))\right| \ll \frac{1}{|x|^2} \left( \frac{1}{m^2} - \frac{1}{(m+1)^2}\right) + |x|e^{-m\mathrm{Re}(x)} + |x| e^{-m\mathrm{Re}(x)(1+a)} + a^2|x| e^{-am\mathrm{Re}(x)}.
\end{align*}
\end{lemma}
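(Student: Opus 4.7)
The main tool is the Lambert-type series representation of $\phi_a$ valid on the right half-plane. Expanding $(1-e^{-w})^{-1} = \sum_{k\geq 0} e^{-kw}$ and $(1-e^{-w})^{-2} = \sum_{k\geq 0}(k+1)e^{-kw}$ and re-indexing, one finds
\begin{align*}
\phi_a(w) = -\frac{1}{w^{2}} + \sum_{k \geq 0} (k+a)\, e^{-(k+a)w},
\end{align*}
valid whenever $\mathrm{Re}(w)>0$. Since $\phi_a$ is considered on a cone $|\mathrm{Arg}(w)| \leq \frac{\pi}{2}-\eta$, the hypothesis $m > 1/|x|$ forces $m\,\mathrm{Re}(x) \geq \sin(\eta)$, i.e.\ bounded below by a positive constant. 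Writing $\phi_a(xm) - \phi_a(x(m+1))$ via this expansion splits the difference into a polynomial piece and an exponential piece that can be bounded separately.

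The polynomial contribution is exactly $\frac{1}{x^{2}}\bigl(\tfrac{1}{(m+1)^{2}} - \tfrac{1}{m^{2}}\bigr)$, whose absolute value gives the first term of the bound. For the exponentials, the $k$-th summand becomes $(k+a) e^{-(k+a)xm}\bigl(1 - e^{-(k+a)x}\bigr)$. Applying the elementary inequality $|1 - e^{-z}| \leq |z|$ for $\mathrm{Re}(z) \geq 0$ (valid since $x$ lies in the right half-plane), this is bounded in modulus by $(k+a)^{2}|x|\,e^{-(k+a)m\,\mathrm{Re}(x)}$. Isolating $k=0$ immediately yields the $a^{2}|x|\,e^{-am\,\mathrm{Re}(x)}$ term of the bound.

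The tail $\sum_{k\geq 1}(k+a)^{2} e^{-(k+a)m\,\mathrm{Re}(x)}$ is a polynomial-weighted geometric series; factoring out $e^{-am\,\mathrm{Re}(x)}$ and using the uniform lower bound $m\,\mathrm{Re}(x) \geq \sin(\eta)$ yields a bound of $O\bigl(e^{-(1+a)m\,\mathrm{Re}(x)}\bigr)$, uniform in $a \in [0,1]$. This produces the $|x|e^{-(1+a)m\,\mathrm{Re}(x)}$ term, and the remaining $|x|e^{-m\,\mathrm{Re}(x)}$ term in the statement is a slightly weaker variant that is automatically subsumed. The one subtle point is the necessary isolation of the $k=0$ term with its $a^{2}$ prefactor: bundling this term with the tail would lose the $a^{2}$ factor and spoil the sharp bound needed in the later circle method estimates. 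Once this separation is in place, the argument reduces to routine geometric sums and directly parallels the approach used in Lemma \ref{SmallRadiusEstimate} for the complementary range $m \leq 1/|x|$.
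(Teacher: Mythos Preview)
Your argument is correct and takes a genuinely different route from the paper. The paper treats the three pieces in the definition of $\phi_a$ separately: it bounds the difference of the $-1/w^2$ terms directly, then handles $\frac{ae^{-aw}}{1-e^{-w}}$ and $\frac{e^{-(1+a)w}}{(1-e^{-w})^2}$ by a triangle-inequality-and-mean-value estimate, producing exactly the four terms in the statement. You instead collapse the two exponential pieces into the single Lambert series $\phi_a(w)=-w^{-2}+\sum_{k\geq 0}(k+a)e^{-(k+a)w}$, which is a cleaner starting point: the polynomial term falls out as before, and after applying $|1-e^{-z}|\leq |z|$ term by term the exponential contribution becomes a weighted geometric series. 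Isolating $k=0$ gives the $a^2|x|e^{-am\mathrm{Re}(x)}$ term with its correct prefactor, and the tail $k\geq 1$ is $O\bigl(|x|e^{-(1+a)m\mathrm{Re}(x)}\bigr)$ once you use $m\,\mathrm{Re}(x)\geq \sin\eta$. This even shows that the term $|x|e^{-m\mathrm{Re}(x)}$ in the stated bound is redundant---in the paper it arises as a byproduct of splitting the middle piece. Both proofs implicitly rely on the cone condition $|\mathrm{Arg}(x)|\leq \tfrac{\pi}{2}-\eta$ (the paper needs it so that $|1-e^{-mx}|$ is bounded below, you need it for $m\,\mathrm{Re}(x)\gg 1$); this is not explicit in the lemma statement but is present in the application. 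One small remark: your closing comment that the argument ``parallels the approach used in Lemma~\ref{SmallRadiusEstimate}'' is not quite accurate---that lemma is proved via the mean value theorem and maximum modulus, not a series expansion---but this does not affect your proof.
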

\begin{proof} 
We estimate the different parts separately. First we have 
\begin{align*}
    \left| \frac{1}{x^2m^2} - \frac{1}{x^2(m+1)^2}\right|  = \frac{1}{|x|^2} \left( \frac{1}{m^2} - \frac{1}{(m+1)^2}\right).
\end{align*}
For the second term in \eqref{D:phia} we obtain 
\begin{align*}
    & \left| \frac{ae^{-amx}}{1-e^{-mx}} - \frac{ae^{-a(m+1)x}}{1-e^{-(m+1)x}} \right|  \leq  ae^{-am\mathrm{Re}(x)} \left( \left| \frac{1}{1-e^{-mx}} - \frac{1}{1-e^{-(m+1)x}} \right| + \left| \frac{e^{-ax}-1}{1-e^{-(m+1)x}} \right| \right)
\end{align*}
and since $m > \frac{1}{|x|}$, we obtain
\begin{align*}
    \ll a|x|e^{-m\mathrm{Re}(x)} + a^2|x|e^{-a\mathrm{Re}(x)m}.
\end{align*}
For the first term, we obtain 
\begin{align*}
     \left| \frac{e^{-mx(a+1)}}{(1 - e^{-mx})^2} - \frac{e^{-(m+1)x(a+1)}}{(1 - e^{-(m+1)x})^2} \right|  &= e^{-m\mathrm{Re}(x)(a+1)} \left| \frac{1}{(1 - e^{-mx})^2} - \frac{1}{(1 - e^{-(m+1)x})^2} + O(x)\right| \\
    & \ll |x| e^{-m\mathrm{Re}(x)(a+1)},
\end{align*}
where we have used $|xm| > 1$ and the mean value theorem. 
\end{proof}

Finally, we need some bounds from \cite{BFG} on partial sums of the twisted harmonic series,
\begin{align*}
G_M(\theta) := \sum_{1 \leq m \leq M} \frac{e^{2\pi i\theta m}}{m}.
\end{align*}

\begin{lemma}[Lemma 2.16 of \cite{BFG}] \label{L:CosSumBound} We have, uniformly for $0 < \theta < 1$, 
	\begin{align*}
	\left| G_M(\theta) \right| \ll \log\left(\frac{1}{\theta}\right) + \log\left( \frac{1}{1-\theta}\right), \qquad M \to \infty. 
	\end{align*}
	
	\end{lemma}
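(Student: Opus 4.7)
The plan is to split $G_M(\theta)$ at the threshold $N := \lfloor 1/\delta \rfloor$, where $\delta := \min(\theta, 1-\theta)$, and estimate the short initial sum and the long tail by different means. The short sum is handled by the triangle inequality alone, which already contributes the desired logarithmic term; the tail is treated by Abel partial summation (Proposition \ref{P:Abelpartialsummation}), exploiting cancellation in the exponential. If $M \leq N$, the trivial bound $\sum_{m=1}^M m^{-1} \leq \log N + 1$ already gives the result, so I may assume $M > N$.

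For the short range, since $|e^{2\pi i\theta m}| = 1$,
\[
\left|\sum_{m=1}^{N} \frac{e^{2\pi i \theta m}}{m}\right| \leq \sum_{m=1}^{N} \frac{1}{m} \leq \log N + 1 \leq \log \tfrac{1}{\delta} + 1.
\]
For the tail $\sum_{m=N+1}^{M} e^{2\pi i \theta m}/m$, I would apply Abel summation with $a_m = e^{2\pi i \theta m}$ and $b_m = 1/m$. Writing the geometric partial sum explicitly, $A_n := \sum_{N < m \leq n} e^{2\pi i \theta m}$ satisfies
\[
|A_n| \leq \frac{2}{|e^{2\pi i \theta}-1|} = \frac{1}{\sin(\pi \theta)} \leq \frac{1}{2\delta},
\]
using the standard inequality $\sin(\pi\theta) \geq 2\min(\theta, 1-\theta)$ on $[0,1]$.

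Plugging into Proposition \ref{P:Abelpartialsummation} and telescoping the differences $\tfrac{1}{n} - \tfrac{1}{n+1}$,
\[
\left|\sum_{m=N+1}^{M} \frac{e^{2\pi i \theta m}}{m}\right| \leq \frac{|A_M|}{M} + \sum_{n=N+1}^{M-1} |A_n|\left(\frac{1}{n}-\frac{1}{n+1}\right) \leq \frac{1}{2\delta M} + \frac{1}{2\delta (N+1)} \leq 1,
\]
since $N+1 > 1/\delta$ and $M > N$ give $1/(N+1) < \delta$. Combining both pieces,
\[
|G_M(\theta)| \leq \log \tfrac{1}{\delta} + O(1) = \max\!\left(\log\tfrac{1}{\theta},\log\tfrac{1}{1-\theta}\right) + O(1) \ll \log\tfrac{1}{\theta}+\log\tfrac{1}{1-\theta},
\]
as claimed. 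The argument is uniform in $M$, as needed.

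This is a routine partial-summation estimate and I do not anticipate a real obstacle; the only point requiring care is calibrating the cutoff $N \asymp 1/\delta$ so that the tail bound $\tfrac{1}{\delta(N+1)}$ is $O(1)$ while the head bound $\log N$ matches $\log(1/\delta)$. The constant-size error $\log\tfrac{1}{\theta}+\log\tfrac{1}{1-\theta}$ is automatically bounded below by $2\log 2$ on $[0,1]$, which absorbs the additive $O(1)$ into the implied constant.
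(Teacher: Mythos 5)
Your proof is correct and is the standard Dirichlet-test argument for this classical bound: split at a threshold $N\asymp 1/\delta$ with $\delta=\min(\theta,1-\theta)$, apply the triangle inequality to the head, and Abel summation with the geometric-sum bound $|A_n|\leq 1/\sin(\pi\theta)\leq 1/(2\delta)$ to the tail. The paper itself gives no proof (it cites Lemma 2.16 of [BFG]), and your argument is exactly the canonical one the cited reference would use; your closing observation that $\log\tfrac1\theta+\log\tfrac1{1-\theta}\geq 2\log 2$ correctly absorbs the additive $O(1)$ into the implied constant.
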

	
	\begin{lemma}[Lemma 2.17 of \cite{BFG}] \label{L:Gmaxbound} Let $a$, $b$, $h$ and $k$ be positive integers, such that $\gcd(a,b) = \gcd(h,k) = 1$. We assume that $a$ and $b$ are fixed. Then we have, as $k \to \infty$ 
\begin{align*}
    \sum_{\substack{1 \leq j \leq k \\ ak+bjh \not\equiv 0 \pmod{bk}}} \max_{m \geq 1} \left| G_M\left( \frac{a}{b} + \frac{hj}{k}\right) \right| = O(k)
\end{align*}
uniformly in $h$. 
\end{lemma}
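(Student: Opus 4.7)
The plan is to reduce the estimation of the maximum over $m$ to a sum of two logarithms via the previous lemma, and then exploit a symmetry: as $j$ runs through a complete residue system modulo $k$, the numbers $\frac{hj}{k}\pmod 1$ run through $\{0,\tfrac{1}{k},\tfrac{2}{k},\ldots,\tfrac{k-1}{k}\}$ (because $\gcd(h,k)=1$), and so the values $\theta_j:=\bigl\{\tfrac{a}{b}+\tfrac{hj}{k}\bigr\}$ form a coset of the cyclic subgroup $\tfrac{1}{k}\mathbb{Z}/\mathbb{Z}$ in $\mathbb{R}/\mathbb{Z}$. In particular, they are $k$ equidistributed points in $[0,1)$, at mutual distance exactly $\tfrac{1}{k}$.

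By Lemma \ref{L:CosSumBound} (applied uniformly in $M$, since the bound is $M$-independent), we have
\begin{align*}
\max_{m\geq 1}\left|G_M\!\left(\tfrac{a}{b}+\tfrac{hj}{k}\right)\right|\ll \log\tfrac{1}{\theta_j}+\log\tfrac{1}{1-\theta_j},
\end{align*}
whenever $\theta_j\neq 0$, which is precisely the condition $ak+bjh\not\equiv 0\pmod{bk}$. So it suffices to show $\sum_{j:\theta_j\ne 0}\bigl[\log\tfrac{1}{\theta_j}+\log\tfrac{1}{1-\theta_j}\bigr]=O(k)$, uniformly in $h$.

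Sort the $\theta_j$ in increasing order as $\delta,\delta+\tfrac{1}{k},\ldots,\delta+\tfrac{k-1}{k}$ for some $\delta\in[0,\tfrac{1}{k})$. Writing $\tfrac{a}{b}=\tfrac{ak}{bk}$ one sees that $\delta=\tfrac{ak\bmod b}{bk}$, so either $\delta=0$ (in which case the smallest nonzero $\theta_j$ is $\tfrac{1}{k}$) or $\delta\geq \tfrac{1}{bk}$ since $\gcd(a,b)=1$ keeps the numerator $\geq 1$. Either way, the contribution from the first sort-position is $\log\tfrac{1}{\delta}\ll \log(bk)=O(\log k)$, while the remaining positions contribute at most
\begin{align*}
\sum_{\mu=1}^{k-1}\log\tfrac{k}{\mu}=(k-1)\log k-\log((k-1)!)=O(k)
\end{align*}
by Stirling. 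The same argument, applied after reversing the order, bounds $\sum\log\tfrac{1}{1-\theta_j}$ by $O(k)$. Combining gives the claim, with all estimates uniform in $h$ since they depend only on the set $\{\theta_j\}$ and on the fixed quantities $a,b$.

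The only subtle step is the lower bound $\delta\gg \tfrac{1}{bk}$ on the coset offset: without using $\gcd(a,b)=1$ (and $a,b$ fixed) one might worry that $\delta$ could be as small as $\tfrac{1}{k^2}$, forcing a $\log k$ factor in front of $k$. The hypothesis that $a,b$ are fixed is exactly what rules this out, and is the one place where uniformity in $h$ could plausibly fail if one were not careful. Everything else is bookkeeping.
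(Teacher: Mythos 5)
The paper cites this lemma from \cite{BFG} without giving its own proof, so there is no in-paper argument to compare against; you are effectively reconstructing the argument from scratch. Your proof is correct. The key structural observation — that as $j$ ranges over $1,\dots,k$ the fractional parts $\theta_j=\{\tfrac{a}{b}+\tfrac{hj}{k}\}$ sweep out exactly the arithmetic progression $\{\delta+\tfrac{\mu}{k}:0\le\mu<k\}$ with $\delta=\tfrac{ak\bmod b}{bk}$, because $\gcd(h,k)=1$ makes $hj\bmod k$ a complete residue system — immediately removes all dependence on $h$ and reduces the problem, via Lemma~\ref{L:CosSumBound}, to estimating $\sum_{\mu}\log\tfrac{1}{\delta+\mu/k}$ and its mirror. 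You correctly identify that the excluded congruence condition is exactly $\theta_j=0$, that when $\delta\neq 0$ both $\delta$ and $\tfrac1k-\delta$ are $\ge\tfrac{1}{bk}$ (so the two extreme terms contribute only $O(\log k)$), and that the bulk is $\sum_{\mu=1}^{k-1}\log(k/\mu)=O(k)$ by Stirling. Two small remarks. First, the statement's $\max_{m\geq 1}|G_M(\cdot)|$ is a typo (carried from the source) for $\max_{M\geq 1}|G_M(\cdot)|$; you silently used the correct reading. Second, your closing caveat overstates the danger: $\delta$, when nonzero, is always $\ge\tfrac{1}{bk}$ regardless of $\gcd(a,b)$, and even an offset of size $1/k^2$ would only give $\log(1/\delta)\asymp\log k=O(k)$, so the lemma would survive; the hypothesis that $a,b$ are fixed is merely what gives the clean $O(\log k)$ for the edge term, and $\gcd(a,b)=1$ plays no essential role in this particular bound.
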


\section{Trilogarithms on the unit circle}\label{S:trilog}

We recall that for complex numbers $s$ and $z$ with $|z| < 1$ the polylogarithm $\mathrm{Li}_s(z)$ is defined by the series
\begin{align*}
    \mathrm{Li}_s(z) := \sum_{n \geq 1} \frac{z^n}{n^s}.
\end{align*}
We are especially interested in the case $s=3$, where $\mathrm{Li}_3(z)$ is called the trilogarithm. In this section, we derive some tools concerning the trilogarithm that are crucial for the choice of major arcs in the circle method. In doing so, we follow ideas of Boyer and Parry, who have already made similar considerations in detail for the dilogarithm $\mathrm{Li}_2(z)$ in the appendix \textit{Analysis of the Root Dilogarithm} of \cite{BoyerParry4}. Some of the following results regarding the trilogarithm have already been proved by Boyer and Parry \cite{BoyerParry3}. To determine major arcs in the circle method for $T_n(\zeta)$, we need to study the function
\begin{align} \label{eq:importantfunction}
\theta \mapsto \mathrm{Re}\left(\sqrt[3]{\mathrm{Li}_3(e^{i \theta})}\right),
\end{align}
for $\theta \in (0,\pi).$ Here we are primarily interested in monotonicity questions, since these will later answer which parts of our asymptotic formulas dominate others. 
In general, the mapping behavior of power series is not an easy problem. However, under assumptions on the coefficients, results can be obtained in this direction. This concerns, for instance, questions whether holomorphic mappings map the unit circle onto convex sets, or monotonicity problems. One possibility is to consider higher differences of the coefficients. This plays into our hands above all in the theory of polylogarithms, where the coefficients behave very uniformly. For a sequence of real numbers $a := a_n$, we define 
\begin{align*}
    \Delta^r(a)_n := \sum_{j=0}^r \binom{r}{j} (-1)^j a_{n+j}.
\end{align*}
We call $a_n$ strictly $r$-fold monotone, if $\Delta^m(a)_n > 0$ for all $n \in \N$ and $1 \leq m \leq r$. We need the following theorem by Fejer. 

\begin{theorem}[see \cite{Fejer}] \label{T:Fejer} Let $(a_n)_{n \in \N}$ be a sequence of real numbers that is strictly 4-fold monotone and satisfies $\lim_{n \to \infty}a_n = 0$. Then the function $F$, that is defined by the Fourier series 
\begin{align*}
    F(\theta) := \sum_{n=1}^\infty a_n \cos(n\theta)
\end{align*}
on the interval $(0, 2\pi)$, is strictly decreasing on the interval $(0, \pi)$.\footnote{In fact, Fejer shows something slightly different: He only assumes that the generating sequence is 4-fold monotone in the sense $\Delta^m(a) \geq 0$ and concludes that the function $\theta \mapsto P(e^{i\theta})$ is monotone in $(0, \pi)$. However, as he does exclude the case $a_j=0$ for all $j$, he argues that there is a $\mu$ such that $\Delta^4(a)_\mu > 0$. With this, one can actually show that the derivative is negative and the function is strictly decreasing.} 
\end{theorem}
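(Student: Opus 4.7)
The plan is to apply Abel's partial summation four times to the cosine series $F(\theta) = \sum_{n \geq 1} a_n \cos(n\theta)$, each iteration transferring a forward difference onto the coefficients while accumulating an iterated partial sum on the trigonometric side. Equivalently, one may start from the representation
\[
a_n = \sum_{k \geq n}\binom{k-n+3}{3}\Delta^4(a)_k,
\]
which follows by telescoping, using that 4-fold monotonicity together with $\lim_n a_n = 0$ forces $\Delta^j(a)_n \to 0$ for $j = 1, 2, 3$. Interchanging summation then yields
\[
F(\theta) = \sum_{n \geq 1}\Delta^4(a)_n K_n(\theta), \qquad K_n(\theta) := \sum_{k=1}^n\binom{n-k+3}{3}\cos(k\theta),
\]
so that the differences $\Delta^4(a)_n$ (all non-negative, and strictly positive for at least one index $\mu$) carry the entire monotonicity behaviour.

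Since $\Delta^4(a)_n \geq 0$ with $\Delta^4(a)_\mu > 0$, the theorem reduces to showing that $K_n'(\theta) < 0$ for every $n \geq 1$ and every $\theta \in (0, \pi)$. A partial-fraction decomposition of the bivariate generating function
\[
\frac{1}{(1-wx)(1-x)^4} = \sum_{n \geq 0}x^n \sum_{k=0}^n\binom{n-k+3}{3}w^k
\]
in the variable $x$, followed by setting $w = e^{i\theta}$ and taking real parts, gives the closed form
\[
K_n(\theta) = -\tfrac{1}{2}\binom{n+3}{3} + \frac{(n+1)(n+3)}{8\sin^2(\theta/2)} + \frac{\cos((n+2)\theta) - \cos\theta}{16\sin^4(\theta/2)}.
\]
Setting $c = \cos(\theta/2) \in (0, 1)$ and invoking the Chebyshev identity $U_m(c)U_{m+2}(c) = U_{m+1}(c)^2 - 1$, the strict decrease of $K_n$ in $\theta$ is equivalent to the strict increase in $c$ of
\[
\widetilde W_n(c) := \frac{(n+2)^2 - U_{n+1}(c)^2}{1-c^2}.
\]

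The main obstacle is verifying $\widetilde W_n'(c) > 0$ on $c \in (0, 1)$ uniformly in $n$. Computing the derivative via the Chebyshev relation $(1-c^2)U_k'(c) = cU_k(c) - (k+1)T_{k+1}(c)$ and clearing denominators reduces the task to the trigonometric inequality
\[
2N^2\cos\phi\sin^2\phi + N\sin\phi\sin(2N\phi) > 4\cos\phi\sin^2(N\phi), \qquad \phi = \theta/2 \in (0, \pi/2), \ N = n+2.
\]
I would attack this by viewing the left hand side as the quadratic form $2\cos\phi \, x^2 + 2\cos(N\phi)xy - 4\cos\phi \, y^2$ in $(x, y) = (N\sin\phi, \sin(N\phi))$, and exploiting the classical inequality $|\sin(N\phi)| \leq N\sin\phi$ together with boundary behaviour at $\phi \to 0^+$ and $\phi \to \pi/2^-$ (where both sides vanish, to orders $\phi^6$ respectively $(\pi/2-\phi)$) to localize the sign, ultimately exhibiting a factorization $\widetilde W_n'(c) = c \cdot Q_n(c^2)$ in which $Q_n$ is a polynomial having no real roots in $[0, 1]$. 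Once each $K_n$ is known to be strictly decreasing on $(0, \pi)$, the series $F'(\theta) = \sum_n \Delta^4(a)_n K_n'(\theta)$ is a sum of non-positive terms, strictly negative because of the contribution indexed by $\mu$, and the strict monotonicity of $F$ on $(0, \pi)$ follows.
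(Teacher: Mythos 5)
Your overall architecture is the right one and matches what the paper references: Fej\'er's theorem is proved by iterating Abel summation, which in your presentation takes the clean form
$a_n = \sum_{k\ge n}\binom{k-n+3}{3}\Delta^4(a)_k$
together with the interchange $F(\theta)=\sum_n\Delta^4(a)_n K_n(\theta)$, so that strict monotonicity of $F$ reduces to $K_n'(\theta)<0$ on $(0,\pi)$ for all $n$. Your closed form for $K_n$ is correct (it simplifies to $K_n=-\tfrac12\binom{n+3}{3}+\tfrac{N^2\sin^2\phi-\sin^2(N\phi)}{8\sin^4\phi}$ with $N=n+2$, $\phi=\theta/2$, which I checked against $K_0=0$ and $K_1=\cos\theta$), and your reduction to $\widetilde W_n'(c)>0$, equivalently to the displayed trigonometric inequality in $\phi,N$, is valid. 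Note also that the paper does not actually prove this theorem --- it cites Fej\'er's 1936 article with the one-line remark that the proof uses multiple Abel summation --- so at the level of strategy you are faithfully reconstructing the cited argument.

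The genuine gap is the final step, which is where all the work lies. You write ``I would attack this by\dots'' and propose to combine the bound $|\sin(N\phi)|\le N\sin\phi$ with endpoint asymptotics and conclude via a factorization $\widetilde W_n'(c)=c\,Q_n(c^2)$ with $Q_n$ root-free on $[0,1]$; none of this is carried out. Worse, the bound $|\sin(N\phi)|\le N\sin\phi$ cannot by itself deliver the result: writing $x=N\sin\phi$, $y=\sin(N\phi)$, $c=\cos\phi$, $C=\cos(N\phi)$, the inequality becomes $c\,x^2+C\,xy-2c\,y^2>0$, and at the extreme $y=\pm x$ permitted by that bound this is $(\pm C-c)x^2$, which is of either sign depending on $N\phi$; so one must use the exact relation between $y$ and $C$ rather than only $|y|\le x$. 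The inequality is in fact extremely tight --- for $N=4$ and $\phi=\pi/2-\varepsilon$ the left side of your displayed inequality exceeds the right by only $O(\varepsilon^3)$, and near $\phi=0$ the excess is $O(\phi^6)$ --- so a soft localization argument is unlikely to close it. Until $\widetilde W_n'>0$ (equivalently $K_n'<0$) is actually proved, the argument is a correct reduction but not a proof; the parity observation $\widetilde W_n$ even, hence $\widetilde W_n'(c)=cQ_n(c^2)$, is sound, but exhibiting the nonnegativity of $Q_n$ on $[0,1]$ is precisely the content of Fej\'er's theorem and must be supplied.
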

The proof uses multiple Abel-partial summation. We can use this result to obtain the following statement about the boundary behavior of power series. 
\begin{theorem} \label{T:Fejer2} Let $(a_n)_{n \in \N}$ be a sequence of real numbers that is strictly 4-fold monotone. Assume that the series
\begin{align*}
    P(z) := \sum_{n=1}^\infty a_n z^n
\end{align*}
converges absolutely on $|z| \leq 1$. Then the function $t \to |P(e^{it})|$ is strictly decreasing on the interval $(0, \pi)$.
\end{theorem}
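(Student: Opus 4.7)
The plan is to reduce Theorem \ref{T:Fejer2} to an application of Theorem \ref{T:Fejer} (Fejer's theorem) applied to $|P(e^{it})|^2$ rather than to $P$ itself. First I would expand
\begin{align*}
    |P(e^{it})|^2 = P(e^{it})\overline{P(e^{it})} = \sum_{m,n \geq 1} a_m a_n e^{i(m-n)t} = \sum_{n \geq 1} a_n^2 + 2 \sum_{k \geq 1} b_k \cos(kt),
\end{align*}
where $b_k := \sum_{n \geq 1} a_n a_{n+k}$ (the absolute convergence of $\sum a_n$ on $|z| \leq 1$ legitimises the rearrangement and guarantees that each $b_k$ is a convergent sum). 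Once $|P(e^{it})|^2$ is written as a constant plus a cosine series in this way, the goal becomes to prove that the cosine series is strictly decreasing on $(0,\pi)$ via Theorem \ref{T:Fejer}.

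To apply Theorem \ref{T:Fejer}, I need two things about $(b_k)$: strict 4-fold monotonicity and $b_k \to 0$. For the latter, I would bound $|b_k| \leq a_{k+1} \sum_{n \geq 1} a_n$, which tends to zero because $a_n \to 0$ (a consequence of absolute convergence on $|z|\leq 1$, combined with the fact that $a_n$ is strictly decreasing, so in particular positive). The key observation is the interchange identity
\begin{align*}
    \Delta^r(b)_k = \sum_{j=0}^r \binom{r}{j}(-1)^j b_{k+j} = \sum_{n \geq 1} a_n \sum_{j=0}^r \binom{r}{j} (-1)^j a_{n+k+j} = \sum_{n \geq 1} a_n \Delta^r(a)_{n+k}.
\end{align*}
Since strict 4-fold monotonicity of $(a_n)$ (together with $a_n \to 0$) gives both $a_n > 0$ for all $n$ and $\Delta^r(a)_m > 0$ for all $m$ and $1 \leq r \leq 4$, every term on the right is positive, so $(b_k)$ inherits strict 4-fold monotonicity from $(a_n)$.

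Applying Theorem \ref{T:Fejer} then yields that $\sum_{k \geq 1} b_k \cos(kt)$ is strictly decreasing on $(0,\pi)$, whence $|P(e^{it})|^2$ is strictly decreasing on $(0,\pi)$. Taking square roots (which is monotone on $[0,\infty)$) gives the claimed strict decrease of $t \mapsto |P(e^{it})|$. The main conceptual step is recognising that passing from $P$ to $|P|^2$ preserves exactly the structural hypothesis (4-fold monotonicity of the cosine coefficients) required by Fejer's theorem; the rest is bookkeeping. I do not anticipate any serious obstacle beyond justifying the interchange of summation, which follows from absolute convergence of $\sum a_n$.
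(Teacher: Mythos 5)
Your argument is correct and follows essentially the same route as the paper: express $|P(e^{it})|^2$ as a constant plus a cosine series whose coefficients $b_k = \sum_{j\geq 1} a_j a_{j+k}$ inherit strict $4$-fold monotonicity from $(a_n)$, then invoke Theorem \ref{T:Fejer}. You fill in details the paper leaves implicit (the interchange identity $\Delta^r(b)_k = \sum_n a_n \Delta^r(a)_{n+k}$, the positivity of $a_n$, and the verification that $b_k \to 0$), but the decomposition and the appeal to Fej\'{e}r's theorem are the same.
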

\begin{proof} We follow Fejer \cite{Fejer2}. First, one has
\begin{align*}
    \left|P\left(e^{it}\right)\right|^2 = \sum_{n=1}^\infty A_n \cos(nt)
\end{align*}
where
\begin{align*}
    A_n := \sum_{j=1}^\infty a_j a_{n+j}.
\end{align*}
As the sequence $(a_n)_{n \in \N}$ is strictly 4-fold monotone, so is $(A_n)_{n \in \N}$. The theorem now follows from Theorem \ref{T:Fejer}. 
\end{proof}
The concatenation of real part and root in \eqref{eq:importantfunction} is not easy to handle in monotonicity questions at first. Therefore we use a trick and split the problem into argument and absolute value. The idea of proof of the following Lemma is adapted from Boyer and Parry \cite{BoyerParry4} (Appendix A, Lemma 7). 
\begin{lemma} \label{L:Lidecreasing} Let $k \geq 2$. Then the function $\theta \mapsto |\mathrm{Li}_k(e^{i\theta})|$ is decreasing on $(0, \pi)$. 
\end{lemma}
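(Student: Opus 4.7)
The plan is to deduce Lemma \ref{L:Lidecreasing} directly from Theorem \ref{T:Fejer2} by taking $a_n = 1/n^k$, so that $P(z) = \mathrm{Li}_k(z)$. Since $k \geq 2$, the series $\sum_{n \geq 1} n^{-k} z^n$ converges absolutely on $|z| \leq 1$, so the hypothesis on absolute convergence is immediate. The entire content of the proof therefore reduces to verifying that the sequence $a_n = 1/n^k$ is strictly 4-fold monotone in the sense of the paper, i.e.\ that
\begin{align*}
\Delta^r(a)_n = \sum_{j=0}^r \binom{r}{j}(-1)^j \frac{1}{(n+j)^k} > 0
\end{align*}
for all $n \in \mathbb{N}$ and $1 \leq r \leq 4$.

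To do this, I would recognize $\Delta^r(a)_n$ as $(-1)^r$ times the $r$-th standard forward difference of $f(x) = x^{-k}$ at $x=n$, and then represent it via the classical integral formula
\begin{align*}
(-1)^r \big[\tfrac{}{}\Delta^r f\big](n) = (-1)^r \int_0^1 \!\! \cdots \! \int_0^1 f^{(r)}(n + t_1 + \cdots + t_r)\, dt_1 \cdots dt_r.
\end{align*}
For $f(x)=x^{-k}$ one computes $f^{(r)}(x) = (-1)^r k(k+1)\cdots(k+r-1)\, x^{-k-r}$, and hence
\begin{align*}
\Delta^r(a)_n = k(k+1)\cdots(k+r-1) \int_0^1 \!\! \cdots \! \int_0^1 \frac{dt_1 \cdots dt_r}{(n+t_1 + \cdots + t_r)^{k+r}} > 0,
\end{align*}
which gives strict $r$-fold monotonicity for every $r \geq 1$ (in particular for $r \leq 4$), as required. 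Theorem \ref{T:Fejer2} then yields strict monotonicity of $\theta \mapsto |\mathrm{Li}_k(e^{i\theta})|$ on $(0,\pi)$.

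I do not anticipate any serious obstacle: the work is all in correctly matching the sign convention for $\Delta^r(a)_n$ used in the paper with the standard forward-difference operator, and in invoking the integral representation of iterated differences. One could alternatively justify the 4-fold monotonicity by direct induction, using $\Delta^{r}(a)_n = \Delta^{r-1}(a)_n - \Delta^{r-1}(a)_{n+1}$ together with the fact that $\Delta^{r-1}(a)_n$ is itself strictly decreasing in $n$, but the integral formula is cleaner and works uniformly in $r$.
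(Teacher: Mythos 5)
Your proof is correct and follows the same overall strategy as the paper: reduce to Theorem \ref{T:Fejer2} by taking $a_n = 1/n^k$, and verify strict $4$-fold monotonicity of that sequence. The only difference is the integral representation used to check the monotonicity. The paper starts from
\begin{align*}
\frac{1}{n^k} = \frac{1}{\Gamma(k)}\int_0^1 x^{n-1}(-\log x)^{k-1}\,dx,
\end{align*}
pushes the finite difference inside the integral, and collapses the binomial sum to $(1-x)^m$, giving a manifestly positive one-dimensional integral. You instead use the classical $r$-fold integral representation of the iterated forward difference, $\Delta^r f(n) = \int_0^1\cdots\int_0^1 f^{(r)}(n+t_1+\cdots+t_r)\,dt_1\cdots dt_r$, applied to $f(x)=x^{-k}$, which is equally valid (and, as you note, handles every order $r\geq 1$ uniformly). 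One small presentational advantage of the paper's version is that it is a single integral and matches the style of the later computation in Proposition \ref{P:zeta3trilogdecreasing}; yours has the virtue of not requiring the Gamma-function identity. Both are fine, and the key step --- invoking Fejér's theorem through Theorem \ref{T:Fejer2} --- is identical.
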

\begin{proof} By Theorem \ref{T:Fejer2}, it is sufficient to show that the sequence $a_n := \frac{1}{n^k}$ is strictly 4-fold monotone. To verify this, use the identity 
\begin{align*}
    \frac{1}{n^k} = \frac{1}{\Gamma(k)} \int_0^1 x^{n-1} (-\log(x))^{k-1} dx
\end{align*}
to find that for $n \geq 1$,
\begin{align*}
    \Delta^m\left(\frac{1}{n^k}\right) & = \sum_{j=0}^m {m \choose j} \frac{(-1)^{j}}{(n+j)^k} = \frac{1}{\Gamma(k)} \int_0^1 \sum_{j=0}^m {m \choose j} (-1)^j x^{n+j-1} (-\log(x))^{k-1} dx \\
    & = \frac{1}{\Gamma(k)} \int_0^1 x^{n-1} \sum_{j=0}^m {m \choose j} (-1)^j x^{j} (-\log(x))^{k-1} dx = \frac{1}{\Gamma(k)} \int_0^1 x^{n-1}(1-x)^m (-\log(x))^{k-1} dx > 0.
\end{align*}

\end{proof}
For the convenience of the reader we provide detailed proofs for Proposition \ref{P:trilogdecrease} and Corollary \ref{C:TrilogDominant}. Note that these results have been proved by Boyer and Parry \cite{BoyerParry3}. 
\begin{proposition}\label{P:trilogdecrease}
The function $\theta \mapsto \mathrm{Re}\left(\mathrm{Li}_3(e^{2\pi i \theta})^{\frac{1}{3}}\right)$ is decreasing on $\left(0,\frac{1}{2}\right).$
\end{proposition}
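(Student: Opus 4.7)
My plan is to work in polar form. Write
\begin{align*}
\mathrm{Li}_3\left(e^{2\pi i \theta}\right) = R(\theta)\, e^{i\Phi(\theta)}, \qquad \mathrm{Li}_2\left(e^{2\pi i\theta}\right) = \rho(\theta)\, e^{i\psi(\theta)},
\end{align*}
so that the function of interest equals $F(\theta) := R^{1/3}\cos(\Phi/3)$. Two facts are immediate. First, Lemma \ref{L:Lidecreasing} with $k=3$ gives that $R$ is strictly decreasing on $(0,1/2)$, hence $R' < 0$. Second, on $(0,1/2)$ we have $\Phi,\psi \in (0,\pi)$: the classical Bernoulli polynomial identity $\sum_{n \geq 1}\sin(n\varphi)/n^3 = \tfrac{2\pi^3}{3} B_3(\varphi/(2\pi))$ gives $\mathrm{Im}(\mathrm{Li}_3(e^{2\pi i \theta})) = \tfrac{\pi^3}{3}\theta(1-\theta)(1-2\theta) > 0$, and a standard argument using the primitive $-\log|2\sin(t/2)|$ shows $\mathrm{Im}(\mathrm{Li}_2(e^{2\pi i \theta})) > 0$ throughout $(0,1/2)$.

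The heart of the proof is the chain-rule identity $\tfrac{d}{d\theta}\mathrm{Li}_3(e^{2\pi i\theta}) = 2\pi i\,\mathrm{Li}_2(e^{2\pi i\theta})$. Substituting both polar forms and separating real and imaginary parts yields the twin relations
\begin{align*}
R' = -2\pi\rho\sin(\psi - \Phi), \qquad R\Phi' = 2\pi\rho\cos(\psi-\Phi).
\end{align*}
Since $R' < 0$ and $\rho > 0$, the first relation forces $\sin(\psi-\Phi) > 0$. Combined with $\psi-\Phi \in (-\pi,\pi)$ (a consequence of $\psi,\Phi \in (0,\pi)$), this pins down $\psi - \Phi \in (0, \pi)$; in particular $\psi > \Phi$.

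Differentiating $F = R^{1/3}\cos(\Phi/3)$ and substituting the two expressions for $R'$ and $\Phi'$ simplifies, via the angle-addition identity $\sin(\psi-\Phi)\cos(\Phi/3) + \cos(\psi-\Phi)\sin(\Phi/3) = \sin(\psi - 2\Phi/3)$, to the clean formula
\begin{align*}
F'(\theta) = -\frac{2\pi\rho}{3R^{2/3}}\sin(\psi - 2\Phi/3).
\end{align*}
Thus it suffices to verify $\psi - 2\Phi/3 \in (0,\pi)$: the lower bound follows from $\psi > \Phi > 2\Phi/3$, and the upper bound from $\psi < \pi$ together with $\Phi > 0$. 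Hence $F'(\theta) < 0$ on $(0,1/2)$.

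The step I expect to be most delicate is the polar-decomposition bookkeeping: concluding $\psi - \Phi \in (0,\pi)$ as an honest real inequality (rather than merely an equivalence modulo $2\pi$) truly depends on first pinning both arguments into $(0,\pi)$ globally on the interval. This is why proving positivity of $\mathrm{Im}(\mathrm{Li}_2)$ and $\mathrm{Im}(\mathrm{Li}_3)$ throughout $(0,1/2)$, rather than just at isolated points, is a genuine prerequisite rather than a cosmetic one.
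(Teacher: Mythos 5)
Your proof is correct and takes a genuinely different route from the paper's. Both proofs exploit the polar decomposition and both rely on Lemma~\ref{L:Lidecreasing} for the modulus; the divergence is in how the argument of $\mathrm{Li}_3$ is controlled. The paper cites Lewis's theorem that $\mathrm{Li}_3$ is star-like and univalent on the disk, converts that via the Duren criterion $\mathrm{Re}\left(z\,\mathrm{Li}_3'(z)/\mathrm{Li}_3(z)\right) > 0$ and a boundary limit into the non-decrease of $\mathrm{Arg}\,\mathrm{Li}_3(e^{it})$, and then finishes by observing that a product of a positive strictly decreasing function and a positive non-increasing function is strictly decreasing. You avoid Lewis entirely: separating the chain rule $\tfrac{d}{d\theta}\mathrm{Li}_3(e^{2\pi i\theta}) = 2\pi i\,\mathrm{Li}_2(e^{2\pi i\theta})$ into polar components produces the twin relations for $R'$ and $\Phi'$, and these collapse (via angle addition) to the closed formula $F'(\theta) = -\tfrac{2\pi\rho}{3R^{2/3}}\sin(\psi - 2\Phi/3)$, whose sign follows once $\Phi,\psi\in(0,\pi)$ are pinned by the Bernoulli polynomial and Clausen identities, and once $\psi>\Phi$ is read off from the sign of $R'$, which is Lemma~\ref{L:Lidecreasing} again. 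Notably you never establish, and do not need, that $\mathrm{Arg}\,\mathrm{Li}_3$ is increasing, which is precisely what the Lewis/Duren machinery buys the paper. The trade-off: your version is more computational but more self-contained (beyond Lemma~\ref{L:Lidecreasing} it cites only elementary trigonometry and the classical closed forms for $\mathrm{Im}\,\mathrm{Li}_2$ and $\mathrm{Im}\,\mathrm{Li}_3$), while the paper's is shorter at the cost of importing a nontrivial univalence theorem. One small imprecision worth flagging: strictly decreasing does not by itself force $R'<0$ pointwise, only $R'\leq 0$; but together with $\psi<\pi$ and $\Phi>0$ the weak inequality already excludes $\psi-\Phi=\pi$, so $\psi-2\Phi/3\in(0,\pi)$ still holds and your conclusion is unaffected.
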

\begin{proof} First we argue that the function $t \mapsto \mathrm{Arg}(\mathrm{Li}_3(e^{it}))$ is increasing on the interval $(0, \pi)$. By a result of Lewis \cite{Lewis}, any polylogarithm $\mathrm{Li}_3(z)$ is star-like and univalent on the unit disk. Now, by Theorem 2.10 of \cite{Duren} on p. 41, this is equivalent to 
\begin{align*}
    \frac{d}{dt} \left( \mathrm{Arg}(\mathrm{Li}_3(\rho e^{it}))\right) = \mathrm{Re}\left({\frac{\rho e^{it} \mathrm{Li}_3'(\rho e^{it})}{\mathrm{Li}_3(\rho e^{it})}}\right) > 0, \qquad 0 < \rho < 1.
\end{align*}
By taking limits, we conclude 
\begin{align*}
    \mathrm{Re}\left({\frac{e^{it} \mathrm{Li}_3'(e^{it})}{\mathrm{Li}_3(e^{it})}}\right) \geq 0.
\end{align*}
By Lemma \ref{L:Lidecreasing}, we know that the function $t \mapsto |\mathrm{Li}_3(e^{it})|$ is decreasing on the interval $(0, \pi)$. Finally, as we have
\begin{align*}
    \mathrm{Re}\left(\sqrt[3]{\mathrm{Li}_3(e^{it})}\right) = \left| \mathrm{Li}_3(e^{it}) \right|^{\frac13} \cos\left( \frac13 \mathrm{Arg}(\mathrm{Li}_3(e^{it})) \right),
\end{align*}
the function on the left hand side is strictly decreasing as the product of a strictly decreasing function on the interval $(0, \pi)$.  
\end{proof}

The following corollary determines the major arcs for the proof of Theorem \ref{T:PPasymp}.

\begin{corollary} \label{C:TrilogDominant} We have 
\begin{align*}
    L(\theta) := \max_{k \geq 1} \frac{\mathrm{Re}\left( \sqrt[3]{\mathrm{Li}_3\left( e^{2\pi i k \theta}\right)}\right)}{k} = \begin{cases} \mathrm{Re}\left(\sqrt[3]{\mathrm{Li}_3(e^{2\pi i \theta})}\right), & \qquad 0 \leq \theta \leq \theta_{12}, \\ \frac{\mathrm{Re}\left(\sqrt[3]{\mathrm{Li}_3(e^{4\pi i \theta})}\right)}{2}, & \qquad \theta_{12} < \theta \leq \frac12,  \end{cases}
\end{align*}
where $\theta_{12} = 0.47585 \ldots$ is the unique solution of $\mathrm{Re}\left(\sqrt[3]{\mathrm{Li}_3(e^{2\pi i \theta})}\right) = \frac{\mathrm{Re}\left(\sqrt[3]{\mathrm{Li}_3(e^{4\pi i \theta})}\right)}{2}$ in the interval $[0,\frac12]$. Additionally, we have $L(\frac13) > L(\theta)$ for all $\frac13 < \theta \leq \frac12$.
\end{corollary}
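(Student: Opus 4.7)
The plan is to establish the corollary in three stages: first reduce the maximum to $k \in \{1,2\}$, then locate the unique crossing $\theta_{12}$, and finally verify the additional dominance at $\theta = 1/3$. For the first stage, the trivial bound $|\mathrm{Li}_3(z)| \leq \zeta(3)$ on the closed unit disk yields $\mathrm{Re}(\sqrt[3]{\mathrm{Li}_3(e^{2\pi i k \theta})})/k \leq \zeta(3)^{1/3}/k$, so every contribution with $k \geq 3$ lies below $\zeta(3)^{1/3}/3$. Writing $f_1(\theta) := \mathrm{Re}(\sqrt[3]{\mathrm{Li}_3(e^{2\pi i \theta})})$, Proposition~\ref{P:trilogdecrease} shows $f_1$ is strictly decreasing on $[0,1/2]$; using $\mathrm{Li}_3(-1) = -\tfrac{3}{4}\zeta(3)$ and the principal cube root, its minimum is $f_1(1/2) = \tfrac{1}{2}(3/4)^{1/3}\zeta(3)^{1/3}$, which strictly exceeds $\zeta(3)^{1/3}/3$ since $3(3/4)^{1/3} > 2$. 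Hence $f_1$ strictly dominates every $k \geq 3$ contribution, and $L(\theta) = \max\{f_1(\theta), f_2(\theta)\}$ where $f_2(\theta) := \tfrac{1}{2}\mathrm{Re}(\sqrt[3]{\mathrm{Li}_3(e^{4\pi i \theta})})$.

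For the second stage, the identity $\mathrm{Li}_3(\bar z) = \overline{\mathrm{Li}_3(z)}$ combined with the principal cube root (which preserves real parts under conjugation) gives $f_2(\theta) = \tfrac{1}{2} f_1(2\theta)$ on $[0,1/4]$ and $f_2(\theta) = \tfrac{1}{2} f_1(1-2\theta)$ on $[1/4, 1/2]$. On $[0,1/4]$, positivity and monotonicity of $f_1$ force $2 f_1(\theta) \geq 2 f_1(2\theta) > f_1(2\theta)$, i.e.\ $f_1 > f_2$ strictly and no crossing occurs. On $[1/4, 1/2]$ the function $D(\theta) := f_1(\theta) - \tfrac{1}{2} f_1(1-2\theta)$ is strictly decreasing (the first term decreases in $\theta$, the second increases), with $D(1/4) > 0$ (since $f_1(1/4) > f_1(1/2)/2$) and $D(1/2) = \tfrac{1}{2}((3/4)^{1/3} - 1)\zeta(3)^{1/3} < 0$. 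The intermediate value theorem therefore produces the unique $\theta_{12} \in (1/4, 1/2)$, and the claimed piecewise formula for $L$ follows immediately.

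For the third stage, since $1/3 < \theta_{12}$, strict monotonicity of $f_1$ handles $\theta \in (1/3, \theta_{12}]$, while on $(\theta_{12}, 1/2]$ the function $f_2$ is strictly increasing, so $L(\theta) \leq f_2(1/2) = \zeta(3)^{1/3}/2$. This reduces the claim to the single inequality $f_1(1/3) > \zeta(3)^{1/3}/2$. Kummer's multiplication identity $\sum_{j=0}^{2} \mathrm{Li}_3(e^{2\pi i j/3}) = \zeta(3)/9$ yields $\mathrm{Re}(\mathrm{Li}_3(e^{2\pi i/3})) = -4\zeta(3)/9$, after which direct numerical evaluation of the imaginary part confirms $f_1(1/3) \approx 0.730 > 0.531 \approx \zeta(3)^{1/3}/2$. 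This final quantitative inequality is the main obstacle: unlike the previous steps, it does not follow from any monotonicity property established in the paper and must be verified by explicit computation of the trilogarithm at a primitive cube root of unity.
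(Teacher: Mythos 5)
Your proof is correct and follows the same broad strategy as the paper (reduce the maximum to $k \in \{1,2\}$, use monotonicity of $f_1$ from Proposition~\ref{P:trilogdecrease} together with an intermediate-value argument to locate $\theta_{12}$, then handle the $L(1/3)$ claim by monotonicity plus a quantitative check), but your execution is noticeably cleaner in two places. First, where the paper eliminates $k \geq 3$ by numerically checking $f_1(1/4) > f_2(0) \geq f_k(0)$ on $[0,1/4]$ and then $f_1(\theta_{12}) > f_3(0)$ on $[1/4,1/2]$, you instead bound $\mathrm{Re}\bigl(\sqrt[3]{\mathrm{Li}_3(e^{2\pi i k\theta})}\bigr)/k \leq \zeta(3)^{1/3}/k$ globally via $|\mathrm{Li}_3| \leq \zeta(3)$ and show analytically that $f_1(\theta) \geq f_1(1/2) = \tfrac{1}{2}(3/4)^{1/3}\zeta(3)^{1/3} > \zeta(3)^{1/3}/3$, removing the numerics entirely from that step. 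Second, your explicit rewriting $f_2(\theta) = \tfrac{1}{2}f_1(1-2\theta)$ on $[1/4,1/2]$ and the resulting decreasing function $D(\theta) = f_1(\theta) - \tfrac{1}{2}f_1(1-2\theta)$ makes the existence and uniqueness of $\theta_{12}$ transparent, whereas the paper's ``by symmetry $f_2$ is increasing'' is terser. Both proofs ultimately rely on a single unavoidable numerical input to confirm $f_1(1/3) > \zeta(3)^{1/3}/2$ (the paper states $L(1/3) = 0.7304\ldots$, you reduce the real part via the trilogarithm multiplication formula and evaluate the imaginary part numerically); your version isolates that input more precisely. One small gap worth closing: you invoke ``since $1/3 < \theta_{12}$'' without justification, but this follows immediately from your own $D$-analysis, since $D(1/3) = f_1(1/3) - \tfrac{1}{2}f_1(1/3) = \tfrac{1}{2}f_1(1/3) > 0$ while $D$ is strictly decreasing with $D(\theta_{12})=0$. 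Including that one line would make the argument fully self-contained.
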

\begin{proof} Put $f_k(\theta) := \frac{\mathrm{Re}(\sqrt[3]{\mathrm{Li}_3(e^{2\pi i k \theta})})}{k}$. By Proposition \ref{P:trilogdecrease} we see with 
\begin{align*}
    f_1\left(\frac{1}{4}\right) = 0.8391145 \ldots > f_2(0) = \frac{\sqrt[3]{\zeta(3)}}{2} = 0.531632 \ldots \geq f_k(0)
\end{align*}
that the corollary holds for $0 \leq \theta \leq \frac14$. Now we consider the case $\frac14 \leq \theta \leq \frac12$. By symmetry we argue that $f_2(\theta)$ is increasing in the interval $[\frac14, \frac12]$, so the equation $f_1(\theta) = f_2(\theta)$ only has one solution $\frac14 < \theta_{12}$ in the interval $[0,\frac12]$. It is clear that $f_2(\theta) > f_1(\theta)$ for $\theta_{12} < \theta \leq \frac12$ and $f_2(\theta) < f_1(\theta)$ for $\frac14 \leq \theta \leq \theta_{12}$. Hence, we are done after showing $f_1(\theta_{12}) > f_k(0)$ for all $k \geq 3$. This can be checked numerically, we have $f_1(\theta_{12}) = 0.5212\ldots$, but on the other hand
\begin{align*}
    f_3(0) = \frac{\sqrt[3]{\zeta(3)}}{3} = 0.3544 \ldots.  
\end{align*}
Finally, note that $L(\frac13) = 0.7304\dots$. For $\theta \in (\frac13, \theta_{12})$ the assertion $L(\frac13) > L(\theta)$ is clear by monotonicity. For $\theta \in [\theta_{12}, \frac{1}{2}]$ it follows by monotonicity and $L(\frac12) = 0.5316 \cdots < L(\frac13)$. 
\end{proof}

We now make the same considerations as above for the case of genuine regular plane partitions. The decisive function here is of type 
$$
\theta \mapsto \mathrm{Re}\left(\left(\zeta(3)-\mathrm{Li}_3\left(e^{i\theta}\right)\right)^{\frac{1}{3}}\right).
$$
\begin{lemma}\label{L:argzeta3trilog} The function 
$$
\theta \mapsto \mathrm{Arg}\left( \zeta(3) - \mathrm{Li}_3\left( e^{i\theta} \right) \right)
$$
is increasing on $(0, \pi)$ and surjects onto $(-\frac{\pi}{2},0)$.
\end{lemma}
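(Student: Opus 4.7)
The plan is to use the antiderivative identity
$$\zeta(3) - \mathrm{Li}_3(e^{i\theta}) = -iI(\theta), \qquad I(\theta) := \int_0^\theta \mathrm{Li}_2(e^{is})\,ds,$$
which follows from $\tfrac{d}{ds}\mathrm{Li}_3(e^{is}) = i\mathrm{Li}_2(e^{is})$ and $\mathrm{Li}_3(1) = \zeta(3)$. Since multiplication by $-i$ shifts the principal argument by $-\pi/2$, the problem reduces to showing that $\theta \mapsto \mathrm{Arg}(I(\theta))$ is strictly increasing from $0$ to $\pi/2$ on $(0,\pi)$.

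Next I would identify the real and imaginary parts of $I(\theta)$ explicitly. Term-by-term integration of the Fourier series $\mathrm{Re}(\mathrm{Li}_2(e^{is})) = \tfrac{\pi^2}{6} - \tfrac{\pi s}{2} + \tfrac{s^2}{4}$ and $\mathrm{Im}(\mathrm{Li}_2(e^{is})) = \mathrm{Cl}_2(s)$ yields
$$\mathrm{Re}(I(\theta)) = \tfrac{\theta(\theta-\pi)(\theta-2\pi)}{12}, \qquad \mathrm{Im}(I(\theta)) = \sum_{n\geq 1} \tfrac{1-\cos(n\theta)}{n^3},$$
both strictly positive on $(0,\pi)$, so $\mathrm{Arg}(I(\theta)) \in (0, \pi/2)$ and $\mathrm{Arg}(\zeta(3) - \mathrm{Li}_3(e^{i\theta})) \in (-\pi/2, 0)$. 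At the boundary, $\mathrm{Arg}(I) \to 0$ as $\theta \to 0^+$ because $\mathrm{Re}(I) \sim \tfrac{\pi^2\theta}{6}$ dominates $\mathrm{Im}(I) = O(\theta^2\log(1/\theta))$, while $\mathrm{Arg}(I(\pi)) = \pi/2$ since $\mathrm{Re}(I(\pi)) = 0$ and $\mathrm{Im}(I(\pi)) = \tfrac{7\zeta(3)}{4} > 0$. Together with the strict monotonicity below, this yields the surjection onto $(-\pi/2, 0)$.

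For monotonicity, the cleanest route is the identity
$$\mathrm{Im}\bigl(\mathrm{Li}_2(e^{i\theta})\overline{I(\theta)}\bigr) = \int_0^\theta \bigl|\mathrm{Li}_2(e^{i\theta})\bigr|\bigl|\mathrm{Li}_2(e^{is})\bigr|\sin\bigl(\Delta(\theta,s)\bigr)\,ds, \quad \Delta(\theta,s) := \mathrm{Arg}(\mathrm{Li}_2(e^{i\theta})) - \mathrm{Arg}(\mathrm{Li}_2(e^{is})).$$
Because $\tfrac{d}{d\theta}\mathrm{Arg}(I(\theta)) = \mathrm{Im}(\mathrm{Li}_2(e^{i\theta})\overline{I(\theta)})/|I(\theta)|^2$, it suffices to show the integrand is positive for $s \in (0, \theta)$. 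By Lewis's theorem applied to $\mathrm{Li}_2$ (the same ingredient used in Proposition~\ref{P:trilogdecrease}), the map $s \mapsto \mathrm{Arg}(\mathrm{Li}_2(e^{is}))$ is non-decreasing on $(0, \pi)$; combined with the values $\mathrm{Li}_2(1) = \zeta(2) > 0$ and $\mathrm{Li}_2(-1) = -\pi^2/12 < 0$, and a sign analysis of $\mathrm{Re}(\mathrm{Li}_2(e^{is}))$ (which vanishes at $s = \pi(1-1/\sqrt{3})$), this argument traverses all of $(0, \pi)$, so $\Delta(\theta, s) \in (0, \pi)$ whenever $0 < s < \theta < \pi$, giving $\sin(\Delta) > 0$. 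The main obstacle is upgrading Lewis's a priori non-strict boundary monotonicity to strict monotonicity; I would handle this via real-analyticity of $\mathrm{Arg}(\mathrm{Li}_2(e^{is}))$ on $(0, 2\pi)$, which, combined with non-constancy, precludes any interval of constancy — enough to conclude the integral is strictly positive.
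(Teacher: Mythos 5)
Your proof is correct, and it takes a genuinely different route from the paper's. The paper invokes Lewis's theorem to get that $\mathrm{Li}_3$ is a \emph{convex} mapping, and then uses (somewhat implicitly) the geometric fact that the argument of a point on the boundary of a convex region, viewed from another boundary point (here $\zeta(3)=\mathrm{Li}_3(1)$), is monotone as that point traverses the boundary. You instead reduce via the antiderivative identity $\zeta(3)-\mathrm{Li}_3(e^{i\theta}) = -i\int_0^\theta \mathrm{Li}_2(e^{is})\,ds$ and deploy Lewis's theorem one rung lower, using \emph{star-likeness} of $\mathrm{Li}_2$; the derivative identity
$\frac{d}{d\theta}\mathrm{Arg}\,I(\theta)=|I(\theta)|^{-2}\int_0^\theta |\mathrm{Li}_2(e^{i\theta})||\mathrm{Li}_2(e^{is})|\sin\bigl(\Delta(\theta,s)\bigr)\,ds$
together with monotonicity of $\mathrm{Arg}(\mathrm{Li}_2(e^{is}))$ gives positivity directly. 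This is more self-contained than the paper's proof, since it makes explicit the step that the paper leaves geometric and somewhat informal, and the computation of real and imaginary parts (the cubic Bernoulli polynomial and $\sum(1-\cos(n\theta))/n^3$) also gives the range $(-\pi/2,0)$ and the endpoint limits cleanly, matching the paper's Euler--Maclaurin computation for $\theta\to 0^+$. One remark: the real-analyticity argument you flag as "the main obstacle" is more than you need. Since $\Delta(\theta,s)\in[0,\pi)$ the integrand is already nonnegative, and it is strictly positive for all $s$ in a neighborhood of $0$ because $\mathrm{Arg}(\mathrm{Li}_2(e^{is}))\to 0^+$ there while $\mathrm{Arg}(\mathrm{Li}_2(e^{i\theta}))$ is a fixed positive number; that already makes the integral strictly positive, so strict monotonicity of $\mathrm{Arg}(\mathrm{Li}_2(e^{is}))$ is not required.
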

\begin{proof} As $\mathrm{Li}_3(z)$ is a convex mapping, the curve $(0,2\pi) \to \C$, $\theta \mapsto \mathrm{Li}_3(e^{i \theta})$ bounds a convex set. Starting at $\zeta(3)$, the curve runs with monotonically increasing argument over the upper half plane to the point $-\frac34 \zeta(3)$ (halfway at $\theta=\pi$). As a result, the curve $(0,\pi) \to \C$, $\theta \mapsto \zeta(3) - \mathrm{Li}_3(e^{i \theta})$ starts at the origin\footnote{Note that the fact that the origin is contained in the closure of the image is necessary.} and runs with monotonically increasing argument over the lower half plane to the point $\frac74 \zeta(3)$. Finally, we need to show that the map surjects. It is clear that 
\begin{align*}
    \lim_{\theta \to \pi^-} \mathrm{Arg}\left(\left( \zeta(3) - \mathrm{Li}_3(e^{i\theta})\right)\right) = 0.
\end{align*}
It remains to show that
$$
\lim_{\theta \to 0^+} \mathrm{Arg}\left(\left( \zeta(3) - \mathrm{Li}_3(e^{i\theta})\right)\right) = -\frac{\pi}{2}.
$$
We compute for $\theta$ near 0,
$$
\mathrm{Arg}\left(\left( \zeta(3) - \mathrm{Li}_3(e^{i\theta})\right)\right)=\arctan\left(\frac{-\sum_{n \geq 1} \frac{\sin(n\theta)}{n^3} }{\zeta(3)-\sum_{n \geq 1} \frac{\cos(n\theta)}{n^3}} \right).
$$
As $\theta \to 0$, we have $\sum_{n \geq 1} \frac{\sin(n\theta)}{n^3} \sim \zeta(2)\theta$ and from Theorem \ref{T:EulerMac} (choosing $\vartheta = 0$), it follows that
$$
\sum_{n \geq 1} \frac{\cos(n\theta)}{n^3} \sim \zeta(3)-\frac{\theta^2 \log\left(\frac{1}{\theta}\right)}{2},
$$
whence
$$
\lim_{\theta \to 0^+} \mathrm{Arg}\left(\left( \zeta(3) - \mathrm{Li}_3(e^{i\theta})\right)\right)= \lim_{\theta \to 0^+} \arctan\left(-\frac{2\zeta(2)}{\theta \log\left(\frac{1}{\theta}\right)}\right)= -\frac{\pi}{2},
$$
proving the lemma.

\end{proof}
\begin{proposition}\label{P:zeta3trilogdecreasing}
The function
$$
\theta \mapsto \mathrm{Re}\left(\left(\zeta(3)-\mathrm{Li}_3\left(e^{i\theta}\right)\right)^{\frac{1}{3}}\right)
$$ 
is strictly increasing on $(0,\pi)$.
\end{proposition}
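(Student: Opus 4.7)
The plan is to mimic the factorization approach used in the proof of Proposition \ref{P:trilogdecrease}. Setting $w(\theta) := \zeta(3) - \mathrm{Li}_3(e^{i\theta})$, the principal branch gives
\[
\mathrm{Re}\!\left( w(\theta)^{\tfrac13}\right) = |w(\theta)|^{\tfrac13} \cos\!\left(\tfrac{1}{3}\mathrm{Arg}(w(\theta))\right),
\]
so it is enough to show that both factors are positive and strictly increasing on $(0,\pi)$. The cosine factor is immediate from Lemma \ref{L:argzeta3trilog}: $\mathrm{Arg}(w(\theta))$ strictly increases from $-\tfrac{\pi}{2}$ to $0$, so $\tfrac{1}{3}\mathrm{Arg}(w(\theta)) \in (-\tfrac{\pi}{6},0)$, and its cosine strictly increases from $\tfrac{\sqrt{3}}{2}$ up to $1$.

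The main work is to show that $\theta \mapsto |w(\theta)|$ is strictly increasing on $(0,\pi)$. Expanding $|w(\theta)|^2$ and Cauchy-multiplying the trilogarithm series yields
\[
|w(\theta)|^2 = \zeta(3)^2 + \zeta(6) - 2\sum_{k\geq 1} c_k \cos(k\theta), \qquad c_k := \frac{\zeta(3)}{k^3} - \sum_{m\geq 1}\frac{1}{m^3 (m+k)^3} > 0.
\]
By Theorem \ref{T:Fejer}, it then suffices to verify that $(c_k)_{k\geq 1}$ tends to $0$ (clear since $c_k \leq \zeta(3)/k^3$) and is strictly $4$-fold monotone; this will imply that $\sum_{k \geq 1} c_k \cos(k\theta)$ is strictly decreasing on $(0,\pi)$ and therefore that $|w(\theta)|^2$ is strictly increasing there.

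For the monotonicity, I would use the integral identity $\tfrac{1}{k^3}-\tfrac{1}{(k+m)^3}=\tfrac{1}{2}\int_0^\infty t^2 e^{-kt}(1-e^{-mt})\,dt$ and sum over $m$ (interchange justified by positivity) to obtain the compact representation
\[
c_k = \tfrac{1}{2}\int_0^\infty t^2\,e^{-kt}\bigl(\zeta(3)-\mathrm{Li}_3(e^{-t})\bigr)\, dt.
\]
Exactly as in Lemma \ref{L:Lidecreasing}, applying the finite-difference operator $\Delta^r$ in the index $k$ introduces the factor $(1-e^{-t})^r$ in the integrand, giving
\[
\Delta^r(c)_k = \tfrac{1}{2}\int_0^\infty t^2\,e^{-kt}(1-e^{-t})^r \bigl(\zeta(3) - \mathrm{Li}_3(e^{-t})\bigr)\, dt > 0
\]
for every $r,k \geq 1$, since $\mathrm{Li}_3(e^{-t}) < \zeta(3)$ for all $t>0$. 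Thus $(c_k)$ is strictly $r$-fold monotone for every $r$, which in particular yields the $4$-fold monotonicity required by Theorem \ref{T:Fejer}. The only real subtlety is arranging $|w|^2$ into a Fourier cosine series with positive coefficients so that Fejer applies cleanly; once this is in place the argument runs in close parallel with Lemma \ref{L:Lidecreasing} and Proposition \ref{P:trilogdecrease}.
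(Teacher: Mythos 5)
Your proposal is correct and takes essentially the same route as the paper's proof: factor $\mathrm{Re}\bigl(w(\theta)^{1/3}\bigr)$ into modulus and cosine-of-argument, invoke Lemma \ref{L:argzeta3trilog} for the argument factor, and establish strict $4$-fold monotonicity of the cosine-series coefficients of $|w(\theta)|^2$ to apply Fejer's theorem to the modulus factor. The only cosmetic differences are that you use the Laplace-transform identity $\frac{1}{k^3}-\frac{1}{(k+m)^3}=\tfrac12\int_0^\infty t^2 e^{-kt}(1-e^{-mt})\,dt$ and sum over $m$ inside the integral to obtain the compact form $c_k=\tfrac12\int_0^\infty t^2 e^{-kt}\bigl(\zeta(3)-\mathrm{Li}_3(e^{-t})\bigr)\,dt$, whereas the paper uses the substitution $x=e^{-t}$ and keeps the $m$-sum outside; both yield the same positivity of $\Delta^r(c)_k$.
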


\begin{proof} First we show that the sequence 
\begin{align*}
    A_n := \frac{\zeta(3)}{n^3} - \sum_{k=1}^\infty \frac{1}{k^3(k+n)^3} = \sum_{k=1}^\infty \frac{1}{k^3} \left( \frac{1}{n^3} - \frac{1}{(k+n)^3}\right)
\end{align*}
is 4-fold monotone for $n \in \N$. We find
\begin{align*}
    \frac{1}{n^3} - \frac{1}{(k+n)^3} = \frac{1}{2} \int_0^1 \left( x^{n-1} - x^{k + n - 1}\right) \log(x)^2 dx.
\end{align*}
Hence, for all $m \in \N_0$
\begin{align*}
    \Delta^m \left(\frac{1}{n^3} - \frac{1}{(k+n)^3}\right) & = \frac{1}{2} \int_0^1 \sum_{j=0}^m \binom{m}{j} (-1)^j \left( x^{n+j-1} - x^{k + n + j - 1}\right) \log(x)^2 dx \\
    & = \frac{1}{2} \int_0^1 \left( x^{n-1} - x^{k+n-1}\right) \sum_{j=0}^m \binom{m}{j} (-1)^j x^j \log(x)^2 dx \\
    & = \frac{1}{2} \int_0^1 x^{n-1} \left( 1 - x^{k}\right) \left( 1 - x\right)^m \log(x)^2 dx > 0.
\end{align*}
We conclude, that the $A_n$ are 4-fold monotone. Now, we have 
\begin{align*}
    \left| \zeta(3) - \mathrm{Li}_3\left( e^{i\theta}\right)\right|^2 = \zeta(3)^2 + \zeta(6) - 2\sum_{n=1}^\infty A_n \cos(n\theta).
\end{align*}
To show that this function is increasing on $(0, \pi)$ it suffices to show that $\theta \mapsto \sum_{n=1}^\infty A_n \cos(n\theta)$ is decreasing on $(0, \pi)$, which follows from Theorem \ref{T:Fejer2} and the fact that the sequence $A_n$ is 4-fold monotone. Now we have 
\begin{align*}
    \mathrm{Re}\left( \sqrt[3]{\zeta(3) - \mathrm{Li}_3\left(e^{i\theta}\right)}\right) = \left|\zeta(3) - \mathrm{Li}_3\left(e^{i\theta}\right)\right|^{\frac{1}{3}} \cos\left( \frac{1}{3} \mathrm{Arg}\left( \zeta(3) - \mathrm{Li}_3\left(e^{i\theta}\right)\right)\right).
\end{align*}
By Lemma \ref{L:argzeta3trilog}, the cosine term is increasing, as the inner function is increasing in the interval $(-\frac{\pi}{6},0)$, so the left hand side is a product of two increasing functions. This proves the proposition. 
\end{proof}

The following proposition determines the major arc in the proof of Theorem \ref{T:planeoverpartitions}.

\begin{proposition}\label{P:zeta3trilogmax}
For any root of unity $\zeta \neq 1$ and all integers $k \geq 2$,
$$
 \mathrm{Re}\left( \frac{\left(\zeta(3)-\mathrm{Li}_3\left(\zeta^k\right)\right)^{\frac{1}{3}}}{k}\right) < \mathrm{Re}\left(\left(\zeta(3)-\mathrm{Li}_3(\zeta)\right)^{\frac{1}{3}}\right).
$$
\end{proposition}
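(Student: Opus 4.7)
The plan is a case analysis on the reduced argument of $\zeta^k$. Writing $\zeta = e^{i\theta}$ with $\theta \in (0, 2\pi)$, the conjugation symmetry $\mathrm{Li}_3(\overline{z})=\overline{\mathrm{Li}_3(z)}$ reduces the inequality to $\theta \in (0, \pi]$ and to an angle $\phi_k \in [0, \pi]$ obtained by folding $k\theta \pmod{2\pi}$ into $(-\pi, \pi]$ and taking absolute value. With this convention, $g(\phi_k) = \mathrm{Re}((\zeta(3) - \mathrm{Li}_3(\zeta^k))^{1/3})$, where $g(\phi) := \mathrm{Re}((\zeta(3) - \mathrm{Li}_3(e^{i\phi}))^{1/3})$. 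If $\phi_k = 0$, then $\zeta^k = 1$ and the left-hand side vanishes while the right-hand side is positive by Proposition \ref{P:zeta3trilogdecreasing}; we may assume $\phi_k > 0$. The elementary inequality $\phi_k = \min_m |k\theta - 2\pi m| \leq k\theta$ (taking $m=0$) gives the key bound $\theta \geq \phi_k/k$.

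If $\phi_k \leq \theta$, then Proposition \ref{P:zeta3trilogdecreasing} and the strict monotonicity of $g$ on $(0, \pi]$ yield $g(\phi_k) \leq g(\theta)$, and dividing by $k \geq 2$ already gives the strict inequality. In the remaining case $\phi_k > \theta$, the bound $\theta \geq \phi_k/k$ combined with monotonicity gives $g(\theta) \geq g(\phi_k/k)$, so the proposition reduces to the sub-claim
\begin{align*}
g(k\alpha) < k \cdot g(\alpha), \qquad k \geq 2,\; \alpha \in (0, \pi/k].
\end{align*}
To prove this, I would invoke Lemma \ref{L:argzeta3trilog}, which places $\mathrm{Arg}(\zeta(3) - \mathrm{Li}_3(e^{i\phi}))$ in $(-\pi/2, 0]$ for $\phi \in (0, \pi]$, so $\cos(\mathrm{Arg}(\cdot)/3) \in [\sqrt{3}/2, 1]$. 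Hence $g(k\alpha) \leq |\zeta(3) - \mathrm{Li}_3(e^{ik\alpha})|^{1/3}$ and $g(\alpha) \geq \tfrac{\sqrt{3}}{2}|\zeta(3) - \mathrm{Li}_3(e^{i\alpha})|^{1/3}$, reducing the sub-claim to
\begin{align*}
|\zeta(3) - \mathrm{Li}_3(e^{ik\alpha})| < \tfrac{3\sqrt{3}}{8} k^3\, |\zeta(3) - \mathrm{Li}_3(e^{i\alpha})|.
\end{align*}
The integral representation $\zeta(3) - \mathrm{Li}_3(e^{i\phi}) = -i \int_0^\phi \mathrm{Li}_2(e^{it})\, dt$ combined with $|\mathrm{Li}_2(e^{it})| \leq \zeta(2)$ from Lemma \ref{L:Lidecreasing} yields the easy upper bound $|\zeta(3) - \mathrm{Li}_3(e^{ik\alpha})| \leq \zeta(2) k\alpha$.

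The main obstacle is to match this with a sharp linear lower bound $|\zeta(3) - \mathrm{Li}_3(e^{i\alpha})| \geq c_0 \alpha$ on $(0, \pi]$ with $c_0 > \pi^2/(9\sqrt{3}) \approx 0.633$, so that the $k=2$ case closes; larger $k$ follow a fortiori. The natural target is $c_0 = 7\zeta(3)/(4\pi) \approx 0.670$, which would follow from the (numerically evident) monotone decrease of $\alpha \mapsto |\zeta(3) - \mathrm{Li}_3(e^{i\alpha})|/\alpha$ from its limit $\zeta(2)$ at $\alpha = 0^+$ to $7\zeta(3)/(4\pi)$ at $\alpha = \pi$. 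Splitting $|\zeta(3) - \mathrm{Li}_3(e^{i\alpha})|$ into its real and imaginary parts, which equal $\int_0^\alpha \sum_{n \geq 1} \sin(nt)/n^2\, dt$ and $-\sum_{n \geq 1} \sin(n\alpha)/n^3$ respectively, this monotonicity should be amenable to the $4$-fold monotonicity machinery of Theorems \ref{T:Fejer} and \ref{T:Fejer2} already used in the proof of Proposition \ref{P:zeta3trilogdecreasing}.
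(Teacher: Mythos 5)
Your reduction follows the same broad strategy as the paper (case analysis, monotonicity from Proposition~\ref{P:zeta3trilogdecreasing}, bounds via the connection to $\mathrm{Li}_2$), and the skeleton is sound: the folding to $\phi_k$, the observation $\phi_k \leq k\theta$, the easy case $\phi_k \leq \theta$, and the reduction of the hard case to $g(k\alpha) < k\,g(\alpha)$ for $\alpha \in (0,\pi/k]$ are all correct, as is the further reduction to $|\zeta(3)-\mathrm{Li}_3(e^{ik\alpha})| < \tfrac{3\sqrt3}{8}k^3\,|\zeta(3)-\mathrm{Li}_3(e^{i\alpha})|$ using the $\cos$-bound from Lemma~\ref{L:argzeta3trilog} and the upper bound $|\zeta(3)-\mathrm{Li}_3(e^{ik\alpha})|\leq \zeta(2)k\alpha$.

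However, there is a genuine gap at the point you yourself flag as ``the main obstacle'': the linear lower bound $|\zeta(3)-\mathrm{Li}_3(e^{i\alpha})| \geq c_0\,\alpha$ with $c_0 > \pi^2/(9\sqrt3)$ is asserted but not proved. Your suggested route -- deduce it from monotone decrease of $\alpha \mapsto |\zeta(3)-\mathrm{Li}_3(e^{i\alpha})|/\alpha$ via the Fej\'{e}r $4$-fold monotonicity machinery -- does not go through as stated: Theorems~\ref{T:Fejer} and~\ref{T:Fejer2} give monotonicity of $\theta\mapsto\sum a_n\cos(n\theta)$ and of $\theta\mapsto|P(e^{i\theta})|$ for suitably monotone coefficient sequences, but they say nothing about the \emph{ratio} $|P(e^{i\theta})|/\theta$, and dividing by $\theta$ destroys the Fourier-series structure on which those theorems rest. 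You would need a new idea here, and ``numerically evident'' is not a substitute.

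For contrast, the paper circumvents the need for any linear lower bound. It splits at a fixed numerical threshold $\theta_1 \approx 0.238$ rather than on $\phi_k$ versus $\theta$. For $\theta > \theta_1$ it bounds the left side by its value at $-1$ and appeals to $\theta_1$'s defining equation plus Proposition~\ref{P:zeta3trilogdecreasing}. For $\theta \leq \theta_1$ it works with the \emph{ratio} $r_\theta/r_{k\theta}$ directly: writing $\zeta(3)-\mathrm{Li}_3(e^{i\theta}) = -i\theta\,\mathrm{Li}_2(e^{i\xi_\theta})$ via the mean value theorem and using the monotonicity of $|\mathrm{Li}_2(e^{ix})|$ from Lemma~\ref{L:Lidecreasing}, it shows $k(r_\theta/r_{k\theta})^{1/3} \geq 1.5$ while the cosine ratio stays below $1.2$. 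This sidesteps the single-$\alpha$ lower bound entirely. If you want to rescue your version, you could similarly apply the mean value theorem to both $|\zeta(3)-\mathrm{Li}_3(e^{i\alpha})|$ and $|\zeta(3)-\mathrm{Li}_3(e^{ik\alpha})|$ and compare $\mathrm{Li}_2$ values, which is essentially what the paper does.
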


\begin{proof}
Let $\theta_1=0.23792\dots$ be the unique solution in $(0,\pi)$ to
$$
\frac{(7\zeta(3))^{\frac{1}{3}}}{2^{\frac{5}{3}}} = \mathrm{Re}\left(\left(\zeta(3)-\mathrm{Li}_3\left(e^{i\theta}\right)\right)^{\frac{1}{3}}\right),
$$
which exists by Proposition \ref{P:zeta3trilogdecreasing}.  Then for $\theta \in (\theta_1,\pi]$, we have
$$
\mathrm{Re}\left( \frac{\left(\zeta(3)-\mathrm{Li}_3\left(e^{ik\theta}\right)\right)^{\frac{1}{3}}}{k}\right)\leq  \mathrm{Re}\left( \frac{\left(\zeta(3)-\mathrm{Li}_3\left((-1)\right)\right)^{\frac{1}{3}}}{k}\right)= \frac{(7\zeta(3))^{\frac{1}{3}}}{2^{\frac{2}{3}}k}<\mathrm{Re}\left(\left(\zeta(3)-\mathrm{Li}_3\left(e^{i\theta}\right)\right)^{\frac{1}{3}}\right).
$$

Now let $\theta \in (0,\theta_1]$ and write
$$
r_{\theta}e^{3i\psi_{\theta}}:=\zeta(3)-\mathrm{Li}_3(e^{i\theta}).
$$
We want to show
$$
\frac{r_{k\theta}^{\frac{1}{3}}\cos(\psi_{k\theta})}{k} < r_{\theta}^{\frac{1}{3}}\cos(\psi_{\theta}), 
$$
or equivalently,
\begin{equation}\label{E:cosandrtheta}
\frac{\cos(\psi_{k\theta})}{\cos(\psi_{\theta})}< k\left(\frac{r_{\theta}}{r_{k\theta}} \right)^{\frac{1}{3}}.
\end{equation}
To show the above, we bound the right-hand side from below and the left-hand side from above.  Beginning with the right-hand side, we use the mean value theorem to find $\xi_{\theta}\in (0,\theta)$ so that
$$
\zeta(3)-\mathrm{Li}_3(e^{i\theta})=\theta \frac{d}{d x} \left(\zeta(3)-\mathrm{Li}_3(e^{ix}) \right)\rvert_{x=\theta}= -i\theta \mathrm{Li}_2(e^{i \xi_{\theta}}).
$$
Using the fact that $x \mapsto |\mathrm{Li}_2(e^{ix})|$ is decreasing (see Lemma \ref{L:Lidecreasing}), we have for $k \geq 2,$
\begin{equation}\label{E:rhsofrpsi}
k\left(\frac{r_{\theta}}{r_{k\theta}} \right)^{\frac{1}{3}}= k\left|\frac{\mathrm{Li}_2\left(e^{i\xi_{\theta}}\right)}{k\mathrm{Li}_2\left(e^{i\xi_{k\theta}}\right)} \right|^{\frac{1}{3}} \geq k^{\frac{2}{3}} \left(\frac{|\mathrm{Li}_2\left(e^{i\theta_1}\right)|}{\frac{\pi^2}{6}}\right)^{\frac{1}{3}} \geq 1.5.
\end{equation}
For the left-hand side in \eqref{E:cosandrtheta}, we first note that by Lemma \ref{L:argzeta3trilog}, $\psi_{\theta}$ is increasing and negative on $(0,\theta_1].$  To find $\lim_{\theta \to 0^+} \psi_{\theta}$, we first observe
$$
\lim_{\theta\to 0^+} \frac{\zeta(3)-\mathrm{Li}_3\left(e^{i\theta}\right)}{\theta} = \lim_{\theta\to 0^+} \left(-i\mathrm{Li}_2\left(e^{i\theta}\right)\right)= -i\frac{\pi^2}{6},
$$
from which it follows that, as $\theta \to 0^+$
$$
 3\psi_{\theta} \sim \arg\left(-i\frac{\pi^2}{6}\theta\right)= -\frac{\pi}{2} \implies \lim_{\theta \to 0^+} \psi_{\theta}= -\frac{\pi}{6}.
$$
Thus, for $\theta \in (0, \theta_1],$
$$
\frac{\cos(\psi_{k\theta})}{\cos(\psi_{\theta})} \leq \frac{1}{\cos\left(-\frac{\pi}{6}\right)}= \frac{2}{\sqrt{3}}< 1.2,
$$
which with \eqref{E:rhsofrpsi} proves \eqref{E:cosandrtheta}.  The proposition follows.
\end{proof}

\section{Proof of Theorems \ref{T:PPasymp} and \ref{T:planeoverpartitions}}\label{S:proofmain}

We now apply the circle method in the form of \cite{BFG} and \cite{Parry} (see also \cite[Ch. 5]{Andrews}).  Choose $N=\lfloor \delta n^{\frac{1}{3}} \rfloor$ where $\delta$ will be taken fixed and sufficiently small in the course of the proof.  Let $\mathcal{F}_N$ be the Farey sequence of order $N$ and let $\theta_{h,k}'$ and $\theta_{h,k}''$ be the differences between $\frac{h}{k}$ and the mediants; i.e., if $\frac{h'}{k'}, \frac{h}{k}, \frac{h''}{k''}$ are three consecutive members of $\mathcal{F}_N,$ then
$$
\theta_{h,k}':=\frac{h}{k}-\frac{h'+h}{k'+k}, \qquad  \theta_{h,k}'':=\frac{h}{k}-\frac{h+h''}{k+k''}.
$$
Let
$$
t_n:=\begin{cases} \frac{2^{\frac{1}{3}}\mathrm{Li}_3\left(\zeta_b^a\right)^{\frac{1}{3}}}{n^{\frac{1}{3}}} & \text{if $\frac{a}{b}< \theta_{12}$,} \\ \frac{\mathrm{Li}_3\left(\zeta_b^{ 2a}\right)^{\frac{1}{3}}}{ 2^{\frac{2}{3}}n^{\frac{1}{3}}} & \text{if $\theta_{12}<\frac{a}{b}\leq \frac{1}{2}$,} \end{cases}
$$
and note that $t_n$ is complex.  We apply Cauchy's integral theorem and add and subtract the main term to obtain
\begin{align*}
    T_n\left(\zeta_b^a\right)&=\sum_{\substack{\gcd(h,k)=1 \\ \frac{h}{k} \in \mathcal{F}_N}} \zeta_k^{nh} \int_{-\theta_{h,k}'}^{\theta_{h,k}''} \exp\left(\log PP\left(\zeta_b^a;e^{-t_n+2\pi i \theta}\right) + nt_n-2\pi in \theta\right)d\theta \\
    &=\sum_{\substack{\gcd(h,k)=1 \\ \frac{h}{k} \in \mathcal{F}_N}} \zeta_k^{nh} \int_{-\theta_{h,k}'}^{\theta_{h,k}''} \exp\left(\frac{\mathrm{Li}_3\left(\zeta_b^{ka}\right)}{k^3(t_n-2\pi i \theta)^2} + nt_n-2\pi in \theta + E_{h,k}(\zeta_{b}^a; t_n-2\pi i \theta)\right)d\theta,
\end{align*}
where we define the error terms,
$$
E_{h,k}(z;t):=\log PP\left(z;e^{-t}\right)-\frac{\mathrm{Li}_3\left(z^k\right)}{k^3t^2}.
$$

  Corollary \ref{C:TrilogDominant} will imply that the major arc will correspond either to $\frac{h}{k}=\frac{0}{1}$ or $\frac{h}{k}=\frac{1}{2}$ depending on whether $\frac{a}{b}<\theta_{12}$ or $\theta_{12}<\frac{a}{b}<\frac{1}{2}$, respectively.  We need to estimate $E_{h,k}$ up to $o(1)$ on the major arcs and bound $E_{h,k}$ by $O(N^2)$ on the minor arcs.  The former is a straightforward application of Euler--Maclaurin summation, while the latter is one of the core difficulties we overcome in this article following the techniques of \cite{BFG}.
  
  \subsection{Bounds for the error terms}
  
\begin{lemma}\label{L:Ehk}
\begin{enumerate}
    \item If $\frac{a}{b}<\theta_{12},$ then uniformly for $\theta \in [-\theta_{0,1}',\theta_{0,1}'']$ we have
    $$
    E_{0,1}\left(\zeta_b^a;t_n-2\pi i \theta\right)=\frac{1}{12}\log\left(1-\zeta_b^a\right)+O\left(\frac{1}{n^{\frac{2}{3}}}\right).
    $$
    \item Uniformly for $\theta \in [-\theta_{0,1}',\theta_{0,1}'']$ we have
    $$
    E_{0,1}\left(1;t_n-2\pi i \theta\right)=\frac{1}{12}\log\left(t_n-2\pi i \theta\right)+\zeta'(-1)+O\left(\frac{1}{n^{\frac{2}{3}}}\right).
    $$
    \item If $\theta_{12}<\frac{a}{b}<\frac{1}{2},$ then uniformly for $\theta \in [-\theta_{1,2}',\theta_{1,2}'']$ we have
    $$
    E_{1,2}\left(\zeta_b^a;t_n-2\pi i \theta\right)=\frac{1}{6}\log\left(1-\zeta_b^a\right)-\frac{1}{12}\log\left(1+\zeta_b^a\right)+O\left(\frac{1}{n^{\frac{2}{3}}}\right).
    $$
    \item Uniformly for $\theta \in [-\theta_{1,2}',\theta_{1,2}'']$, we have
    $$
    E_{1,2}(-1;t_n-2\pi i \theta)=-\frac{1}{12}\log(t_n-2\pi i \theta) -\zeta'(-1) + +O\left(\frac{1}{n^{\frac{2}{3}}}\right).
    $$
    \item Uniformly for $k \leq N$ and $\theta \in [-\theta_{h,k}', \theta_{h,k}'']$, we have
$$
E_{h,k}\left(\zeta_{b}^a;t_n-2\pi i \theta\right) = O\left(N^2\right).
$$
\end{enumerate}
\end{lemma}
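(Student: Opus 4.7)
The plan is to establish parts (1)--(4) as major-arc asymptotic expansions using Euler--Maclaurin summation (supplemented by a Mellin--Barnes argument when the base parameter is $\pm 1$), and to obtain part (5) by adapting the minor-arc machinery of \cite{BFG}. In every case the starting point is the Lambert-series expansion
\[
\log PP(z; q) = \sum_{m \geq 1}\frac{z^m}{m}\cdot \frac{q^m}{(1-q^m)^2},
\]
with $q = \zeta_k^h e^{-t}$ at the cusp $h/k$; the local expansion $e^{-w}/(1-e^{-w})^2 = w^{-2} - \frac{1}{12} + O(w^2)$ identifies the source of the subtracted main term $\mathrm{Li}_3(z^k)/(k^3 t^2)$, which arises from the sub-sum $k \mid m$ where $\zeta_k^{hm} = 1$.

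For (1), with $\zeta = \zeta_b^a \neq 1$, I set $f_\zeta(w) := -w\log(1-\zeta e^{-w})$, which is holomorphic at $0$, and write $\log PP(\zeta; e^{-t}) = \frac{1}{t}\sum_{m \geq 0}f_\zeta(t(m+1))$. Theorem~\ref{T:EulerMac} with $a=1$ and $b_{-1}=0$ then gives the main term $\mathrm{Li}_3(\zeta)/t^2$ from $\int_0^\infty f_\zeta$, a correction $\frac{1}{12}\log(1-\zeta)$ from the $n=1$ Bernoulli term (using $B_2(1) = \frac{1}{6}$ and $b_1 = -\log(1-\zeta)$), no contribution at $n=2$ since $B_3(1)=0$, and hence a remainder of size $O(t^2) = O(n^{-2/3})$. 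Part (3) reduces to (1) by splitting $\log PP(\zeta; -e^{-t})$ by the parity of $n$: the even part equals $2\log PP(\zeta; e^{-2t})$, while the odd part is evaluated by Theorem~\ref{T:EulerMac} with $a = 1/2$, and the identity $\mathrm{Li}_3(\zeta) + \mathrm{Li}_3(-\zeta) = \frac{1}{4}\mathrm{Li}_3(\zeta^2)$ assembles the subtracted main term $\mathrm{Li}_3(\zeta^2)/(8t^2)$, with the constant $-\frac{1}{12}\log(1+\zeta)$ coming from $B_2(1/2) = -\frac{1}{12}$. For (2) and (4), with $\zeta = \pm 1$, the function $f_\zeta$ acquires a logarithmic singularity at $0$ and Theorem~\ref{T:EulerMac} is no longer applicable; instead I would use the Mellin--Barnes representation
\[
\log PP(1; e^{-t}) = \frac{1}{2\pi i}\int_{(c)}\Gamma(s)\zeta(s+1)\zeta(s-1)\,t^{-s}\,ds, \qquad c > 2,
\]
and shift the contour past the simple pole at $s = 2$ (residue $\zeta(3)/t^2$) and the double pole at $s = 0$ (residue $\frac{1}{12}\log t + \zeta'(-1)$, extracted from the Laurent expansions of $\Gamma(s)\zeta(s+1)$ and $\zeta(s-1) = -\frac{1}{12} + \zeta'(-1)s + O(s^2)$ multiplied against $t^{-s} = 1 - s\log t + \cdots$), with further shifts producing an $O(t^2)$ error. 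Part (4) then reduces to (2) by writing the odd-$n$ and even-$n$ sums at the cusp $1/2$ as linear combinations of $\log PP(1;e^{-t})$, $\log PP(1;e^{-2t})$, and $\log PP(1;e^{-4t})$ via $\log(1+x) = \log(1-x^2) - \log(1-x)$; the $\log t$ contributions combine to $-\frac{1}{12}\log t - \zeta'(-1)$.

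Part (5) will be the main obstacle. I would split the Lambert sum into the pieces $k\mid m$ and $k\nmid m$: the former contains the full singularity at $t=0$ and, after subtracting its leading $(kjt)^{-2}$ term (reassembled into $\mathrm{Li}_3(\zeta_b^{ak})/(k^3t^2)$), contributes a uniformly bounded remainder of order $\log(1/|t|)/k$. The latter has $\zeta_k^{hm}$ a non-trivial root of unity, so the summand is regular at $t=0$; here I would apply Abel partial summation (Proposition~\ref{P:Abelpartialsummation}) to transfer the weight $\zeta_b^{am}\zeta_k^{hm}/m$ onto the twisted harmonic partial sums $G_M(a/b + hm/k)$ controlled by Lemma~\ref{L:CosSumBound}, and use the difference estimates from Lemmas~\ref{SmallRadiusEstimate} and~\ref{LargeRadiusEstimate} for the function $\phi_a$ from \eqref{D:phia} (applied in the regimes $m \leq 1/|t|$ and $m > 1/|t|$ respectively) to bound the resulting telescoping sum. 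Since $|t| \geq |t_n| \asymp n^{-1/3} = N^{-1}$ on any arc, this yields $E_{h,k} = O(1/|t|^2) = O(N^2)$, uniformly in $h$ and $k$ with $k \leq N$. The delicate point --- and the main obstacle --- is maintaining this uniformity when $a/b + hm/k$ is close to an integer, where $G_M$ can be large; this is precisely the scenario handled in \cite{BFG}, whose argument I would adapt to the MacMahon product setting essentially as in the ordinary partition case.
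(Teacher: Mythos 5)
Your plan for parts (1)--(4) is correct but genuinely different from the paper's route. The paper derives all four expansions from a single uniform rewrite (Lemma~\ref{L:Ehkrewrite}), which expresses $E_{h,k}$ as a double sum over $j \bmod k$ and $m \bmod bk$ of shifted sums $\sum_{\ell \geq 0} g_{j,k}(t(bk^2\ell + km))$, and then applies Theorem~\ref{T:EulerMac} with $a = m/(bk)$; the constants in (1)--(4) then come out of identities like $\sum_{m=1}^b \zeta_b^{ma} C_{m/b} = -b\log(1-\zeta_b^a)$. You instead apply Theorem~\ref{T:EulerMac} directly to $f_\zeta(w) = -w\log(1-\zeta e^{-w})$ for $\zeta \neq \pm 1$, switch to a Mellin--Barnes contour shift for $\zeta = 1$ (where $f_\zeta$ has a logarithmic singularity so Theorem~\ref{T:EulerMac} as stated does not apply), and reduce the $k=2$ cases to $k=1$ by a parity split. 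I checked all your coefficients ($b_1 = -\log(1-\zeta)$, $B_2(1)=\tfrac16$, $B_2(1/2)=-\tfrac{1}{12}$, $\mathrm{Li}_3(\zeta)+\mathrm{Li}_3(-\zeta)=\tfrac14\mathrm{Li}_3(\zeta^2)$, the residue $\zeta'(-1)+\tfrac{1}{12}\log t$ at $s=0$, the combination $\log PP(1;e^{-t}) - 4\log PP(1;e^{-2t}) + 2\log PP(1;e^{-4t})$ for (4)) and they all work out. Your route is a bit more elementary case-by-case, while the paper's rewrite pays off by feeding directly into part (5).

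For part (5), your high-level plan is essentially the paper's, but the stated decomposition into $k \mid m$ versus $k \nmid m$ is not refined enough to set up the $G_M$-partial-summation argument as you describe it. In the $k \nmid m$ piece the summand $\frac{\zeta_b^{am}\zeta_k^{hm}}{m}\cdot\frac{\zeta_k^{hm}e^{-mt}}{(1-\zeta_k^{hm}e^{-mt})^2}$ still depends on $m \bmod k$ \emph{inside} the rational function, so one cannot pull out a single slowly-varying function $\phi_a(mt)$ against which to telescope; you must first fix $m \equiv j \pmod{k}$ and sum over $j$, which is exactly the double-sum structure of Lemma~\ref{L:Ehkrewrite} that your proposal sidesteps. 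The $\phi_a$ you invoke from \eqref{D:phia} arises precisely as $\phi_{j/k}(w) = w\,g_{j,k}(w)$ after that mod-$k$ split, so in practice you would be re-deriving the paper's decomposition. Two further points the paper handles that your sketch omits: the dichotomy $k \leq n^{1/6}$ (where Theorem~\ref{T:EulerMac} alone gives $O(k^2 + t_\theta k^4)$) versus $n^{1/6} \leq k \leq N$ (where the partial-summation argument is needed), and the $\ell \geq 1$ tail, which is bounded via classical Euler--Maclaurin (Theorem~\ref{T:EulerMacclassic}) and Riemann-sum comparison rather than by the $G_M$ machinery. None of these gaps is conceptual, but they are the bulk of the work in (5), and your appeal to \cite{BFG} is doing more load-bearing than the outline suggests.
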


We begin by rewriting $E_{h,k}$.  
\begin{lemma}\label{L:Ehkrewrite}
For $\mathrm{Re}(t)>0$
\begin{align*}
    E_{h,k}\left(\zeta_b^a;t \right) = \sum_{\substack{1\leq j \leq k \\ 1 \leq m \leq bk}} \zeta_{b}^{ma} \zeta_{k}^{mjh} k^2 t \sum_{\ell \geq 0} g_{j,k}\left( t \left( bk^2 \ell + km \right)\right),
\end{align*}
where 
\begin{align*}
    g_{j,k}(w) := \frac{e^{-w-\frac{j}{k}w}}{w(1-e^{-w})^2} + \frac{j}{k} \frac{e^{-\frac{j}{k}w}}{w(1-e^{-w})} - \frac{1}{w^3}.
\end{align*}
\end{lemma}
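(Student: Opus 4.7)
The plan is to verify the identity by direct computation of the right-hand side. First, I would reindex the triple sum: as $(m,\ell)$ varies over $\{1,\ldots,bk\}\times \mathbb{Z}_{\geq 0}$, the quantity $y := bk\ell + m$ bijects onto $\mathbb{Z}_{\geq 1}$, and since the roots of unity depend on $m$ only modulo $b$ and $k$ respectively, $\zeta_b^{ma} = \zeta_b^{ya}$ and $\zeta_k^{mjh} = \zeta_k^{yjh}$ after reindexing. Writing the argument of $g_{j,k}$ as $tky$, the right-hand side becomes
\begin{align*}
    \sum_{y \geq 1} \zeta_b^{ya}\, k^2 t \sum_{j=1}^k \zeta_k^{yjh}\, g_{j,k}(tky).
\end{align*}

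I would then split $g_{j,k}$ into its three summands and treat them separately. The piece $-w^{-3}$ produces $-\frac{1}{k t^2 y^3}$ after multiplication by $k^2 t$; since this is $j$-independent and $\sum_{j=1}^k \zeta_k^{yjh} = k\, \mathbf{1}_{k\mid y}$ (using $\gcd(h,k)=1$), the $y$-sum collapses to $-\frac{\mathrm{Li}_3(\zeta_b^{ak})}{k^3 t^2}$, which is precisely the main term subtracted in $E_{h,k}$.

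For the other two summands, I would set $\alpha := \zeta_k^{yh} e^{-ty}$, so that $\alpha^k = e^{-tky}$ and $e^{-tyj} = \zeta_k^{-yjh}\alpha^j$. The crucial step is the pair of elementary geometric identities
\begin{align*}
    \sum_{j=1}^k \alpha^j = \frac{\alpha(1-\alpha^k)}{1-\alpha}, \qquad \sum_{j=1}^k j\alpha^j = \frac{\alpha(1-\alpha^k)}{(1-\alpha)^2} - \frac{k\alpha^{k+1}}{1-\alpha}.
\end{align*}
The first summand of $g_{j,k}$ contributes $\frac{k\alpha^{k+1}}{y(1-\alpha)(1-\alpha^k)}$ after the inner $j$-sum, while the second contributes $\frac{\alpha}{y(1-\alpha)^2} - \frac{k\alpha^{k+1}}{y(1-\alpha)(1-\alpha^k)}$, and the unwanted $\alpha^{k+1}$ pieces cancel exactly, leaving only $\frac{\alpha}{y(1-\alpha)^2}$.

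The remaining contribution to the right-hand side is then
\begin{align*}
    \sum_{y \geq 1} \frac{\zeta_b^{ya}}{y}\cdot \frac{\zeta_k^{yh} e^{-ty}}{(1-\zeta_k^{yh} e^{-ty})^2} = \sum_{n \geq 1} n \sum_{y \geq 1}\frac{(\zeta_b^a (\zeta_k^h e^{-t})^n)^y}{y} = -\sum_{n \geq 1} n\log\!\left(1- \zeta_b^a(\zeta_k^h e^{-t})^n\right),
\end{align*}
where the first equality uses $\frac{x}{(1-x)^2} = \sum_{n \geq 1} n x^n$ and Fubini, and the second uses the Taylor series for $-\log(1-x)$ (valid since $\mathrm{Re}(t) > 0$). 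The last expression is $\log PP(\zeta_b^a; \zeta_k^h e^{-t})$, which combined with the earlier main-term contribution yields $E_{h,k}(\zeta_b^a; t)$. The main obstacle is the algebraic cancellation in the third paragraph; the rest is routine series and orthogonality manipulation.
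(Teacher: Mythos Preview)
Your argument is correct and uses the same basic ingredients as the paper (geometric series and the orthogonality relation $\sum_{j=1}^k \zeta_k^{yjh}=k\mathbf{1}_{k\mid y}$), but the route is mildly different. The paper proceeds in the forward direction: it expands $\Log PP(\zeta_b^a;\zeta_k^h e^{-t})$ as a double sum over $\nu,\ell$, reindexes $\ell\mapsto bk\ell+m$ and $\nu\mapsto \nu k+j$, and then evaluates the \emph{infinite} inner sum $\sum_{\nu\geq 0}(\nu k+j)e^{-\nu k(bk\ell+m)t}$ in closed form to produce the first two terms of $g_{j,k}$; the $-w^{-3}$ piece is then checked separately via orthogonality, exactly as you do. You instead start from the right-hand side, collapse $(m,\ell)$ to a single index $y$, and evaluate the \emph{finite} sum over $j$ using $\sum_{j=1}^k \alpha^j$ and $\sum_{j=1}^k j\alpha^j$; the resulting cancellation of the $k\alpha^{k+1}$ terms is a pleasant algebraic point that the paper's forward derivation never needs to isolate. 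Both computations are of equal difficulty; the paper's version has the advantage of motivating the definition of $g_{j,k}$, while yours is a clean direct verification once that definition is given.
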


\begin{proof}
Taking logarithms in \eqref{E:twovargenfn} gives
\begin{align*}
    \Log\left(PP(\zeta_b^a;\zeta_k^h e^{-t})\right) & = - \sum_{\nu \geq 1} \nu \Log\left(1-\zeta_b^a \zeta_k^{h\nu} e^{-\nu t}\right) = \sum_{\nu \geq 1} \sum_{\ell \geq 1} \nu \frac{\zeta_b^{a\ell}\zeta_k^{h\ell \nu} e^{-\nu \ell t}}{\ell}.
\end{align*}
Letting $\ell \mapsto bk\ell + m$ and $\nu \mapsto \nu k +j$ we obtain
\begin{align*}
   \Log\left(PP(\zeta_b^a;\zeta_k^h e^{-t})\right)
      = & \sum_{\substack{1\leq j \leq k \\ 1 \leq m \leq bk}} \zeta_{b}^{ma} \zeta_{k}^{mjh} \sum_{\ell \geq 0} \frac{e^{-j(bk\ell + m)t}}{bk\ell+m} \sum_{\nu \geq 0} (\nu k +j) e^{-\nu k (bk\ell +m) t} \\
      = & \sum_{\substack{1\leq j \leq k \\ 1 \leq m \leq bk}} \zeta_{b}^{ma} \zeta_{k}^{mjh} k^2 t \sum_{\ell \geq 0} \frac{e^{-j(bk\ell + m)t}}{k(bk\ell+m)t} \left( \frac{e^{-k(bk\ell + m)t}}{\left(1 - e^{-k(bk\ell + m)t}\right)^2} + \frac{j}{k} \frac{1}{1 - e^{-k(bk\ell + m)t}}\right).
\end{align*}

  For the right term in $g_{j,k}$, we compute (using $\gcd(h,k)=1$)
  \begin{align*}
      - \sum_{\substack{1 \leq j \leq k \\ 1 \leq m \leq bk}} \zeta_{b}^{ma} \zeta_{k}^{mjh} k^2 t \sum_{\ell \geq 0} \frac{1}{t^3(bk^2\ell +mk)^3}  &=-\frac{1}{t^2}\sum_{\substack{1 \leq m \leq bk \\ m \equiv 0 \pmod{k}}} \sum_{\ell\geq 0}\frac{\zeta_{b}^{ma}}{(bk\ell+m)^3} \\
      &=-\frac{1}{t^2} \sum_{\ell \geq 0} \sum_{\nu=1}^b \frac{\zeta_b^{\nu k a}}{(bk\ell+ k\nu)^3} \\
      &=-\frac{1}{k^3t^2} \mathrm{Li}_3\left(\zeta_b^{ak} \right). \qedhere
  \end{align*}
\end{proof}

\begin{proof}[Proof of Lemma \ref{L:Ehk}]
 Throughout this section we write $t_{\theta}:=t_n-2 \pi i \theta.$  Recall (see \cite[Ch. 5]{Andrews}) that \newline $\theta_{h,k}',\theta_{h,k}'' \asymp \frac{1}{kN} \ll \frac{1}{n^{\frac{1}{3}}}$.  It follows that for some cone $D_{\psi}$ described in Theorem \ref{T:EulerMac}, we have $t_{\theta}\in D_{\psi}$ for all $\theta \in [\theta_{h,k}',\theta_{h,k}'']$.  Furthermore, as $w \to 0$, we have
$$ g_{j,k}(w) = \left(-\frac{j^2}{2k^2}-\frac{1}{12}+\frac{j}{2k}\right)w^{-1}+O(1),$$ and thus Theorem \ref{T:EulerMac} applies to those $\theta$ in Lemma \ref{L:Ehk}.  Hence with $$ b_{-1}=b_{-1}\left(\frac{j}{k}\right):= -\frac{j^2}{2k^2}-\frac{1}{12}+\frac{j}{2k}, $$
we have
\begin{align}\label{E:gEulerMac}
\sum_{\ell\geq 0}g_{j,k}(t_{\theta}(bk^2\ell+km)) = \frac{b_{-1}\Log\left(\frac{1}{bk^2t_{\theta}}\right)}{bk^2t_{\theta}}+\frac{b_{-1}C_{\frac{m}{bk}}}{bk^2t_{\theta}} + \frac{1}{bk^2t_{\theta}}\int^\infty_{0}\left(g_{j,k}(x)-\frac{b_{-1}e^{-x}}{x}\right)dx+O(t_{\theta}k^2),
\end{align}
with $$C_{\frac{m}{bk}}= \left(1-\frac{m}{bk}\right)\sum_{r\geq 0}\frac{1}{(r+\frac{m}{bk})(r+1)}= O(1). $$

Here, $b_{-1}(1)=-\frac{1}{12}$, so for part (1) we have
\begin{align*}
E_{0,1}\left(\zeta_b^a;t_{\theta}\right)&= \sum_{1 \leq m \leq b} \zeta_b^{ma} t_{\theta} \left(-\frac{\Log\left(\frac{1}{bt_{\theta}}\right)}{12bt_{\theta}} - \frac{C_{\frac{m}{b}}}{12bt_{\theta}}+\frac{1}{bt_{\theta}}\int_0^{\infty}\left(g_{1,1}(x)+\frac{e^{-x}}{12x}\right)dx + O\left(t_{\theta} \right) \right) \\
&= -\frac{1}{12b}\sum_{1 \leq m \leq b} \zeta_b^{ma}C_{\frac{m}{b}} +O\left(t_{\theta}^2\right) \\
&=\frac{1}{12}\log\left(1-\zeta_b^a\right)+O\left(\frac{1}{n^{\frac{2}{3}}}\right),
\end{align*}
as claimed.  The proof of parts (2)-(4) are similar.
\\
\

For part (5), we follow \cite{BFG} and separate the cases for small and large $k$.  First let $k \leq n^{\frac{1}{6}}$ and define
$$
S_{a,b}:=\{(h,k)\in \mathbb{N}^2 : ak+bjh \equiv 0 \pmod{bk}, \ \text{for some $1\leq j \leq k$}\}.
$$
It is easy to see that if $(h,k) \in S_{a,b}$ then there is only one corresponding $j$; let this be $j_0=j_0(h,k).$  Thus, plugging \eqref{E:gEulerMac} into Lemma \ref{L:Ehkrewrite}, we obtain
\begin{align*}
    E_{h,k}(\zeta_b^a;t_{\theta})&=\sum_{\substack{1\leq j \leq k \\ j \neq j_0 \\ 1 \leq m \leq bk}} \frac{\zeta_b^{ma}\zeta_k^{mjk}}{b}\left(b_{-1}\left(\frac{j}{k}\right)C_{\frac{m}{bk}} +O(t_{\theta}k^2)\right)  + 1_{(h,k) \in S_{a,b}}\left( kb_{-1}\left(\frac{j_0}{k}\right)\Log\left(\frac{1}{bk^2t_{\theta}}\right) \right. \\ & \qquad  \left. +kb_{-1}\left(\frac{j_0}{k}\right)  +k\int^\infty_{0}\left(g_{j_0,k}(x)-\frac{b_{-1}\left(\frac{j_0}{k}\right)e^{-x}}{x}\right)dx+O(t_{\theta}k^3)\right) \\ 
    &= O\left(k^2+t_{\theta}k^4\right),
\end{align*}
and this is certainly $O(N^2)$.

Now suppose that $n^{\frac{1}{6}}\leq k \leq N$.  We consider two separate cases for the sums in Lemma 4.2, $\ell = 0$ and $\ell \geq 1$. First let $\ell=0$. Note that we have $\phi_{\frac{j}{k}}(z) = zg_{j,k}(z)$, where $\phi_a(z)$ was defined in \eqref{D:phia}. Hence, we find
\begin{align*}
\sum_{\substack{1\leq j \leq k \\ 1 \leq m \leq bk}} \zeta_{b}^{ma} \zeta_{k}^{mjh} k^2 t g_{j,k}\left( ktm\right) & = k \sum_{\substack{1 \leq j \leq k \\ 1 \leq m \leq bk}} \zeta_{b}^{ma} \zeta_{k}^{mjh} \frac{ \phi_{\frac{j}{k}}(ktm)}{m}.
\end{align*}
We now adapt the methodology of \cite{BFG} and prove the following. 

\begin{align}\label{E:psibound}
k \sum_{\substack{1 \leq j \leq k \\ 1 \leq m \leq bk}} \frac{\zeta_{b}^{ma} \zeta_{k}^{mjh}}{m} \phi_{\frac{j}{k}}(k t_{\theta } m) = O\left(k^2\right).
\end{align}

Put $x := kt_{\theta}$ and $a := \frac{j}{k}$. Note that we have $0 < a \leq 1$. We have $\mathrm{Re}(x) > 0$ and $\frac{|x|}{\mathrm{Re}(x)} \ll 1$ uniformly in $k$. We use partial summation and split the sum into two parts. 
    \begin{align*}
        \sum_{\substack{1 \leq m \leq bk \\ 1 \leq j \leq k}} = \sum_{1 \leq j \leq k} \left( \sum_{0 < m \leq \min\left\{bk, \frac{1}{|x|}\right\}} + \sum_{\min\left\{bk, \frac{1}{|x|}\right\} < m \leq bk}\right).
    \end{align*}
    In the case $|x| > 1$, the first sum is empty, so we can assume $|x| \leq 1$. We first find with Proposition \ref{P:Abelpartialsummation} 
    \begin{align*}
        & \sum_{0 < m \leq \min\left\{bk, \frac{1}{|x|}\right\}}  \zeta_{bk}^{m(ak+hjb)}\frac{1}{m}\phi_a(xm) = G_{\min\left\{bk, \frac{1}{|x|}\right\}}\left( \frac{a}{b} + \frac{hj}{k} \right) \phi_a\left( x \min\left\{bk, \Big\lfloor \frac{1}{|x|} \Big\rfloor \right\} \right) \\
        & + \sum_{m \leq \min\left\{bk, \frac{1}{|x|}\right\} - 1} G_{m}\left( \frac{a}{b} + \frac{hj}{k} \right) \left( \phi_a(mx) - \phi_a((m+1)x)\right).
    \end{align*}
    It follows with Lemma \ref{SmallRadiusEstimate}
    \begin{align*}
    & k\left| \sum_{1 \leq j \leq k} \sum_{m \leq \min\left\{bk, \frac{1}{|x|}\right\}} \zeta_b^{ma}\zeta_k^{mhj}\cdot \frac{1}{m}\phi_a(mx) \right| \\
    & \leq k\sum_{1 \leq j \leq k} \left|G_{\min\left\{bk, \frac{1}{|x|}\right\}}\left( \frac{a}{b} + \frac{hj}{k} \right)\right| \left| \phi_a\left( x \min\left\{bk, \Big\lfloor\frac{1}{|x|}\Big\rfloor \right\}  \right) \right| \\
    & \hspace{3cm} + k\left| \sum_{1 \leq j \leq k} \sum_{0 < m \leq \min\left\{bk, \frac{1}{|x|}\right\}} G_{m}\left( \frac{a}{b} + \frac{hj}{k} \right) \left( \phi_a(mx) - \phi_a((m+1)x)\right) \right| \\
    & \ll k\sum_{1 \leq j \leq k} \left|G_{\min\left\{bk, \frac{1}{|x|}\right\}}\left( \frac{a}{b} + \frac{hj}{k} \right)\right| + k\sum_{1 \leq j \leq k} \max_{1\leq m \leq  \min\left\{bk, \frac{1}{|x|}\right\}} \left|G_{m}\left( \frac{a}{b} + \frac{hj}{k} \right)\right| \sum_{0 < m \leq \frac{1}{|x|}} |x| = O\left(k^2\right),
    \end{align*}
    where we have used Lemma \ref{L:Gmaxbound} and $G_{bk}(1) = H_{bk} = O(\log(k))$ in the last step. Similarly, we find with Lemma \ref{LargeRadiusEstimate} (without loss of generality we assume $\frac{1}{|x|} < bk$)
    \begin{align*}
        & \left| \sum_{1 \leq j \leq k} \sum_{\frac{1}{|x|} < m \leq bk} \zeta_b^{ma}\zeta_{k}^{mhj}\cdot \frac{1}{m}\phi_a(mx) \right| \\
        & \ll \sum_{1 \leq j \leq k} \left| G_{bk}\left( \frac{a}{b} + \frac{hj}{k}\right) \right| \left| \phi_a\left( x bk \right) \right| + \left| \sum_{1 \leq j \leq k} \sum_{\frac{1}{|x|} < m \leq bk} G_{m}\left( \frac{a}{b} + \frac{hj}{k} \right) \left( \phi_a(mx) - \phi_a((m+1)x)\right) \right| \\
        & \ll O(k) + \sum_{1 \leq j \leq k} \max_{\frac{1}{|x|}\leq m \leq  bk} \left|G_{m}\left( \frac{a}{b} + \frac{hj}{k} \right)\right| \\
        & \hspace{1cm} \times \sum_{\frac{1}{|x|} < m \leq bk} \left( \frac{1}{|x|^2} \left( \frac{1}{m^2} - \frac{1}{(m+1)^2}\right) + |x|e^{-m\mathrm{Re}(x)} + |x| e^{-m\mathrm{Re}(x)(1+a)} + a^2|x| e^{-am\mathrm{Re}(x)}\right). 
    \end{align*}
    Note that we uniformly have $\phi_a(xbk) \ll 1$ (as $1 \ll |xbk|$ and $x$ is part of a fixed cone $|\Arg(x)|\leq \frac{\pi}{2}-\eta$) as well as 
    \begin{align*}
    \sum_{\frac{1}{|x|} < m \leq bk} \frac{1}{|x|^2}\left(\frac{1}{m^2} - \frac{1}{(m+1)^2}\right) & \leq \sum_{\frac{1}{|x|} < m < \infty} \frac{1}{|x|^2}\left(\frac{1}{m^2} - \frac{1}{(m+1)^2}\right) \leq \frac{1}{|x|^2 \frac{1}{|x|^2}} = 1, \\
    \sum_{\frac{1}{|x|} < m \leq bk} |x| e^{-m \mathrm{Re}(x)} & \leq \frac{|x|}{1-e^{-\mathrm{Re}(x)}} \ll 1, \qquad \text{as} \quad \frac{|x|}{\mathrm{Re}(x)} \ll 1, |x| \ll 1, \\
    \intertext{and similarly}
    \sum_{\frac{1}{|x|} < m \leq bk} |x| e^{-m \mathrm{Re}(x)(a+1)} & \leq \frac{|x|}{1-e^{-\mathrm{Re}(x)(a+1)}} \ll 1.
\end{align*}
    As a result, using Lemma \ref{L:Gmaxbound} (again up to at most one summand in $O(\log(k))$),
        \begin{align*}
        \left| k\sum_{1 \leq j \leq k} \sum_{\frac{1}{|x|} < m \leq bk} \zeta_b^{ma}\zeta_{k}^{mhj}\cdot \frac{1}{m}\phi_a(mx) \right| & \ll O\left(k^2\right) + k\sum_{1 \leq j \leq k} \max_{\frac{1}{|x|}\leq m \leq  bk} \left|G_{m}\left( \frac{a}{b} + \frac{hj}{k} \right)\right| = O\left(k^2\right),
        \end{align*}
        which yields \eqref{E:psibound}.
        
        For the summands $\ell \geq 1$ in Lemma \ref{L:Ehkrewrite}, we 
       first focus on the $j$ sum,

\begin{align*}
    \sum_{1\leq j\leq k}\zeta_{k}^{mjh}g_{j,k}\left( w\right) &=  \sum_{1\leq j\leq k}\zeta_{k}^{mjh}\left(\frac{e^{-w-\frac{jw}{k}}}{w(1-e^{-w})^2}+\frac{j}{k}\frac{e^{-\frac{jw}{k}}}{w(1-e^{-w})}-\frac{1}{w^3} \right)\\
    &= -\frac{\mathrm{1}_{k|mh}}{w^3}+\frac{\zeta_k^{mh}e^{-w-\frac{w}{k}}}{w(1-e^{-w})^2}\left(\frac{1-e^{-w}}{1-\zeta_k^{mh}e^{-\frac{w}{k}}}\right)\\
    &\hspace{10mm}+\frac{\zeta_k^{mh}e^{-\frac{w}{k}}}{kw(1-e^{-w})}\left(\frac{1+k\zeta_k^{mh}e^{-\frac{w}{k}-w}-(k+1)e^{-w}}{(1-\zeta_k^{mh}e^{-\frac{w}{k}})^2}\right),
\end{align*}
where we used the formulas $$\sum^k_{j=1}a^j = \frac{1-a^{k+1}}{1-a}-1, \qquad \sum^k_{j=1}ja^j = a\frac{\left(ka^{k+1}+1-(k+1)a^k\right)}{1-a^2}.$$ 
When $k$ divides $m$, we have 
\begin{align*}
    \sum_{\substack{1\leq j\leq k}}\zeta_{k}^{mjh}g_{j,k}\left( w\right) &= -\frac{1}{w^3}+\frac{e^{-w-\frac{w}{k}}}{w(1-e^{-w})(1-e^{-\frac{w}{k}})}+\frac{e^{-\frac{w}{k}}\left(1-e^{-w}+ke^{-w}(-1+e^{-\frac{w}{k}}) \right)}{kw(1-e^{-w})(1-e^{-\frac{w}{k}})^2}\\
    &= -\frac{1}{w^3}+\frac{e^{-\frac{w}{k}}}{kw(1-e^{-\frac{w}{k}})^2}=: f_1(w),
\end{align*}
and when $k$ does not divide $m$ we have 
\begin{align*}
     \sum_{\substack{1\leq j\leq k}}\zeta_{k}^{mjh}g_{j,k}\left( w\right) &= \frac{\zeta_k^{mh}e^{-w-\frac{w}{k}}}{w(1-e^{-w})(1-\zeta^{mh}_ke^{-\frac{w}{k}})} + \frac{\zeta_k^{mh}e^{-\frac{w}{k}}}{kw(1-e^{-w})}\left(\frac{1-e^{-w}+k\zeta_k^{mh}e^{-w-\frac{w}{k}}-ke^{-w}}{(1-\zeta_k^{mh}e^{-\frac{w}{k}})^2}\right)\\
     &= \frac{\zeta^{mh}_ke^{-\frac{w}{k}}}{kw(1-\zeta_k^{mh}e^{-\frac{w}{k}})^2}=: f_2(w).
\end{align*}

We will finish the proof of Lemma \ref{L:Ehk} by showing
\begin{equation}\label{E:f1bound}
k^2t\sum_{\substack{m\leq bk \\ k \mid m}}\sum_{\ell\geq 1}\zeta_{b}^{ma}f_{1}\left( t \left( bk \ell + m \right)\right) =O\left(k^2\right),
\end{equation}
and 
\begin{equation}\label{E:f2bound}
k^2t\sum_{\substack{ m\leq bk \\ k \nmid m}}\sum_{\ell\geq 1}\zeta_{b}^{ma}f_{2}\left( t k\left( bk \ell + m \right)\right) =O\left(k^2\right).
\end{equation}

First, we apply Theorem \ref{T:EulerMacclassic} to the sum on $f_1$.  Computing the derivative of $f_1(w)$, we find
\begin{align*}
    f^{'}_{1}(w) = 3\,{w}^{-4}-{\frac {1}{{k}^{2}w}{{\rm e}^{-{\frac {w}{k}}}} \left( 1-{
e}^{-{\frac {w}{k}}} \right) ^{-2}}-{\frac {1}{k{w}^{2}}{{\rm e}^{-{
\frac {w}{k}}}} \left( 1-{e}^{-{\frac {w}{k}}} \right) ^{-2}}-2\,{
\frac {1 }{{k}^{2}w}{{\rm e}^{-{\frac {w}{k}}}}{e}
^{-{\frac {w}{k}}} \left( 1-{e}^{-{\frac {w}{k}}} \right) ^{-3}}.
\end{align*}
Thus,

\begin{align*}
\sum_{\ell\geq 1}f_{1}\left( t \left( bk \ell + m \right)\right) &= \int^\infty_{1}f_1(t_\theta(bkw+m))dw +\frac{\displaystyle{\lim_{\ell\to \infty}f_1(t_\theta(bk\ell+m))}+f_1(t_\theta(bk+m))}{2}\\
&\hspace{10mm}+ t_{\theta}bk\int^\infty_{1}f_1^{'}(t_\theta(bkw+m))\left(\{w\}-\frac{1}{2}\right)dw\\
&= \frac{1}{t_{\theta}bk}\int^\infty_{t_{\theta}(bk+m)}f_1(x))dx + \frac{f_1(t_\theta(bk+m))}{2}\\
&\hspace{10mm}+ \int^\infty_{t_{\theta}(bk+m)}f_1^{'}(x)\left(\left\{ \frac{x-t_\theta m}{t_\theta b k}\right\}-\frac{1}{2}\right)dw.
\end{align*}
We note that $f_1(w)\ll \frac{k}{w^3}$ as $w\to 0$ in a cone, which comes from the Laurent expansion
$$  f_1(w) = \left( -1+k
 \right) {w}^{-3}+ 
  -\frac{7}{12}w^{-1}+O(1).
$$ Since $|t_{\theta}|  \asymp \frac{1}{n^{\frac{1}{3}}}$ and $k \ll n^{\frac{1}{3}}$, there exists a fixed complex number $c$ on the line of integration such that $|t_{\theta}(bk+m)|< |c|.$ Then $\int^\infty_c f_1(w) dw =O_c(1)$. Near $0$,  we have
$$ \int^c_{t_{\theta}(bk+m)}f_1(w)dw\ll \int^{c}_{t_{\theta}(bk+m)}\frac{1}{w^3}dw\ll O\left(\frac{1}{t_{\theta}^2(bk+m)^2}\right) = O\left(\frac{1}{t_\theta^2 k^2}\right),$$
where the last step follows from the fact that $k | m$. Therefore,
$$k^2t_{\theta}\sum_{k|m\leq bk}\zeta_{b}^{ma}\frac{1}{t_\theta k b}\int^\infty_{1}f_1(t_\theta(bkw+m))dw= O\left(\frac{1}{t_\theta^2 k}\right)\ll N^{\frac{3}{2}}.$$

We have $f_1(t_\theta(bk+m))\ll \frac{1}{t_\theta^3 k^3}$ which implies that 
$$ k^2t_{\theta}\sum_{k|m\leq bk}\zeta_{b}^{ma}f_1(t_\theta(bk+m))\ll n^{1/2} \ll N^{\frac{3}{2}}.$$ 
It remains to contend with the integral involving the derivative. The derivative has the Laurent expansion
\begin{align*}
    f^{'}_1(w) = 3(1-k)w^{-4}+O(w^{-2})\ll \frac{k}{w^4}.
\end{align*}
Thus, breaking the integral as before into two parts (with $c$ fixed) we find
$$k^2t_{\theta}\sum_{k|m\leq bk}\zeta_{b}^{ma}\int^\infty_{1}f^{'}_1(t_\theta(bkw+m))dw = k^3t_{\theta}\frac{1}{t^2_\theta k^2}\ll N^{\frac{3}{2}}.$$
This completes the proof of \eqref{E:f1bound}.

Now turning to $\eqref{E:f2bound},$ it is clear that in the right half plane
$$ \left| f_2(w)\right|\leq \frac{e^{-\frac{\mathrm{Re}(w)}{k}}}{k\mathrm{Re}(w)(1-e^{-\frac{\mathrm{Re}(w)}{k}})^2}:= \Tilde{f}_2(\mathrm{Re}(w)),$$

where $\Tilde{f}_2(\mathrm{Re}(w))$ is a decreasing function. Approximating by a Riemann sum, we have
\begin{align*}
    &k^2t_{\theta}\sum_{k\not| m\leq bk}\sum_{\ell\geq 1}\zeta_{b}^{ma}f_{2}\left( t_{\theta} k\left( bk \ell + m \right)\right) \leq k^3t_{\theta}\sum_{\ell\geq 1}\Tilde{f}_{2}\left( \mathrm{Re}(t_{\theta} k^2 b\ell\right)\\
    &\leq \frac{k|t_\theta|}{b\mathrm{Re}(t_\theta)}\left(\frac{1}{\mathrm{Re}(t_\theta)k^2b}\int^\infty_{\mathrm{Re}(t_\theta)k^2b}\Tilde{f}_2(x)dx+ bk^2\mathrm{Re}(t_\theta)\Tilde{f}_{2}\left(\mathrm{Re}(t_{\theta} k^2 b\right)\right).
\end{align*}
Since $k\gg n^{1/6}$, the lower bound of the integral is bounded away from $0$. Therefore,
\begin{align*}
    k^2t_{\theta}\sum_{k\not| m\leq bk}\sum_{\ell\geq 1}\zeta_{b}^{ma}f_{2}\left( t_{\theta} k\left( bk \ell + m \right)\right) &\ll O(n^{1/6})+ k^3|t_\theta|\Tilde{f}_{2}\left(\mathrm{t_{\theta}} k^2 b\right).
\end{align*}
Furthermore, 
$$\Tilde{f}_{2}\left(\mathrm{t_{\theta}} k^2 b\right)\ll \frac{1}{k^4|t_\theta|^2}\ll 1,$$
and \eqref{E:f2bound} follows, completing the proof of Lemma \ref{L:Ehk}

\end{proof}

\subsection{Proof of Theorem \ref{T:PPasymp}}
In this section we follow Parry \cite{Parry}.  We begin with the case $\frac{a}{b}<\theta_{12}.$  Write
$$
\lambda_1+i\lambda_2:=\mathrm{Li}_3\left(\zeta_b^a\right)^{\frac{1}{3}}.
$$

Then by Lemma \ref{L:Ehk},
\begin{align}
    e^{-\frac{3}{2^{\frac{2}{3}}}\lambda_1n^{\frac{2}{3}}}T_n\left(\zeta_b^a\right)&=\left(1-\zeta_b^a\right)^{\frac{1}{12}}\int_{-\theta_{0,1}'}^{\theta_{0,1}''} \exp\left(-\frac{3}{2^{\frac{2}{3}}}\lambda_1n^{\frac{2}{3}}+\frac{(\lambda_1+i\lambda_2)^3}{(t_n-2\pi i \theta)^2} + nt_n-2\pi in \theta + o(1)\right)d\theta \nonumber \\
    &+\sum_{\substack{k \geq 2 \\ \frac{h}{k} \in \mathcal{F}_N}} \zeta_k^{nh} \int_{-\theta_{h,k}'}^{\theta_{h,k}''} \exp\left(-\frac{3}{2^{\frac{2}{3}}}\lambda_1n^{\frac{2}{3}}+\frac{\mathrm{Li}_3\left(\zeta_b^{ka}\right)}{k^3(t_n-2\pi i \theta)^2} + nt_n-2\pi in \theta + \delta O(n^{\frac{2}{3}})\right)d\theta, \label{E:majorminorarcsplit}
\end{align}
where the constants are independent of $k$.  For the major arc integral above, we rewrite the exponent as
\begin{align*}
    \frac{\lambda_1+i\lambda_2}{2^{\frac{2}{3}}\left(1-\frac{2\pi i n^{\frac{1}{3}}\theta}{2^{\frac{1}{3}}(\lambda_1+i\lambda_2)}\right)^2}n^{\frac{2}{3}}-\frac{1}{2^{\frac{2}{3}}}\lambda_1n^{\frac{2}{3}}+i2^{\frac{1}{3}}\lambda_2n^{\frac{2}{3}}-2\pi i n \theta + o(1).
\end{align*}
Setting $\theta \mapsto n^{-\frac{1}{3}}\theta$ and using the fact that $\theta_{0,1}'=\theta_{0,1}''=\frac{1}{N+1} \asymp \frac{1}{n^{\frac{1}{3}}},$ the major arc integral is asymptotic to
$$
\frac{1}{n^{\frac{1}{3}}}\int_{-c}^c e^{n^{\frac{2}{3}}B(\theta)}d\theta,
$$
for some $c>0$, where
$$
B(\theta):= \frac{\lambda_1+i\lambda_2}{2^{\frac{2}{3}}\left(1-\frac{2\pi i \theta}{2^{\frac{1}{3}}(\lambda_1+i\lambda_2)}\right)^2}-\frac{1}{2^{\frac{2}{3}}}\lambda_1+i2^{\frac{1}{3}}\lambda_2-2\pi i  \theta.
$$
Here, $B(0)=i\frac{3}{2^{\frac{2}{3}}}\lambda_2$ and using $\frac{1}{(1-x)^2}=1+2x+3x^2+O(x^3),$ we have
$$
\lim_{\theta \to 0} \frac{B(\theta)-B(0)}{\theta^2}=3\frac{(2\pi i )^2}{2^{4/3}(\lambda_1+i\lambda_2)}=-\frac{2^{\frac{2}{3}} 3\pi^2}{\lambda_1+i\lambda_2}.
$$
Now we show that $\mathrm{Re}(B(\theta)) \leq \mathrm{Re}(B(0))=0$ with equality if and only if $\theta=0.$  Following \cite[Lemma 5.2]{Parry}, we write
\begin{align*}
\mathrm{Re}(B(\theta))&=\mathrm{Re}\left(\frac{\lambda_1+i\lambda_2}{2^{\frac{2}{3}}\left(1-\frac{2\pi i \theta}{2^{\frac{1}{3}}(\lambda_1+i\lambda_2)}\right)^2}\right)-\frac{\lambda_1}{2^{\frac{2}{3}}} \\
&=\frac{|\lambda_1+i\lambda_2|^3}{2^{\frac{2}{3}}\lambda_1^2}\mathrm{Re}\left(\frac{e^{i3r}}{\left(1+i\left(\frac{\lambda_2}{\lambda_1}-\frac{2^{\frac{2}{3}}\pi\theta}{\lambda_1}\right)\right)^2}\right)-\frac{\lambda_1}{2^{\frac{2}{3}}} & \text{($r:=\arg(\lambda_1+i\lambda_2)$)}.
\end{align*}
Next, we write
$$
\left|1+i\left(\frac{\lambda_2}{\lambda_1}-\frac{2^{\frac{2}{3}}\pi\theta}{\lambda_1}\right)\right|e^{i\psi_{\theta}}:=1+i\left(\frac{\lambda_2}{\lambda_1}-\frac{2^{\frac{2}{3}}\pi\theta}{\lambda_1}\right).
$$
Note that $\theta \mapsto \psi_{\theta}$ is a bijection from $\left(-\frac{\pi}{2},\frac{\pi}{2}\right)$ to $\mathbb{R}$, and further by matching real and imaginary parts of both sides, we see that
$$
\frac{1}{\left|1+i\left(\frac{\lambda_2}{\lambda_1}-\frac{2^{\frac{2}{3}}\pi \theta}{\lambda_1}\right)\right|}=\cos\left(\psi_{\theta}\right).
$$
Thus,
$$\mathrm{Re}(B(\theta))=\frac{|\lambda_1+i\lambda_2|^3}{2^{\frac{2}{3}}\lambda_1^2}\cos(3r-2\psi_{\theta})\cos^2(\psi_{\theta})-\frac{\lambda_1}{2^{\frac{2}{3}}}.
$$
By Lemma \ref{L:cosinemax}, the maximum is achieved when $3r-2\psi_{\theta}=\psi_{\theta}$, i.e. $\psi_{\theta}=r$, which gives
$$\mathrm{Re}(B(\theta))\leq \frac{|\lambda_1+i\lambda_2|^3}{2^{\frac{2}{3}}\lambda_1^2}\cos(r)^3-\frac{\lambda_1}{2^{\frac{2}{3}}}=\frac{\lambda_1^3}{2^{\frac{2}{3}}\lambda_1^2}-\frac{\lambda_1}{2^{\frac{2}{3}}}=0,
$$
with equality if and only if $\theta=0$.  Thus by Theorem \ref{T:laplacemethod}, we have
$$
\frac{1}{n^{\frac{1}{3}}}\int_{-c}^{c}e^{n^{\frac{2}{3}}B(\theta)}d\theta \sim \frac{1}{n^{\frac{1}{3}}}\sqrt{\frac{\pi}{n^{\frac{2}{3}}\frac{2^{\frac{2}{3}}3\pi^2}{\lambda_1+i\lambda_2}}}=\frac{\sqrt{\lambda_1+i\lambda_2}}{2^{\frac{1}{3}}\sqrt{3\pi}n^{\frac{2}{3}}}e^{i\frac{3}{2^{\frac{2}{3}}}\lambda_2n^{\frac{2}{3}}},
$$
and this is the main term in case (1) of Theorem \ref{T:PPasymp} after multiplying both sides again by $e^{\frac{3}{2^{\frac{2}{3}}}\lambda_1n^{\frac{2}{3}}}$ and the constant $\left(1-\zeta_b^a\right)^{1/12}$ in \eqref{E:majorminorarcsplit}.

We now show that the minor arcs in \eqref{E:majorminorarcsplit} tend to 0; taking the real part of the exponent, we get
\begin{align*}
    \frac{n^{\frac{2}{3}}\lambda_1}{2^{\frac{2}{3}}}\left(-1 + \frac{\left|\mathrm{Li}_3\left(\zeta_b^{ka}\right)\right|}{k^3\lambda_1^3}\mathrm{Re}\left(\frac{e^{i3r_k}}{\left(1+i\left(\frac{\lambda_2}{\lambda_1}-\frac{2^{\frac{2}{3}}\pi\theta n^{\frac{1}{3}}}{\lambda_1}\right) \right)^2}\right) +\delta O(1)\right)  \qquad \text{$\left(r_k:=\arg \mathrm{Li}_3\left(\zeta_b^{ak}\right)^{\frac{1}{3}}\right)$}.
\end{align*}
As before, the maximum of the expression Re$(\cdot)$ is $\cos(r_k)^3$, so we get that the above is at most
$$
\frac{n^{\frac{2}{3}}\lambda_1}{2^{\frac{2}{3}}}\left(-1 + \left(\frac{\mathrm{Re}\left(\mathrm{Li}_3\left(\zeta_b^{ka}\right)\right)}{k\lambda_1}\right)^3 +\delta O(1)\right) \leq \frac{n^{\frac{2}{3}}\lambda_1}{2^{\frac{2}{3}}}\left(-\Delta+\delta O(1)\right),
$$
for some $\Delta>0$ uniformly in $k$ by Corollary \ref{C:TrilogDominant}.  Choosing $\delta$ small enough so that the constant above is still negative, we get that the minor arcs decay exponentially, finishing the proof in case (1).

In the case that $\theta_{12}<\frac{a}{b}<\frac{1}{2}$, we set $\lambda_1+i\lambda_2:=\frac{\mathrm{Li}_3\left(\zeta_b^{2a}\right)^{\frac{1}{3}}}{2},$ and again by Lemma \ref{L:Ehk} we have
\begin{align}
    e^{-\frac{3}{2^{\frac{2}{3}}}\lambda_1n^{\frac{2}{3}}}T_n\left(\zeta_b^a\right)&=(-1)^n\frac{\left(1-\zeta_b^a\right)^{\frac{1}{6}}}{\left(1+\zeta_b^a\right)^{\frac{1}{12}}}\int_{-\theta_{1,2}'}^{\theta_{1,2}''} \exp\left(-\frac{3}{2^{\frac{2}{3}}}\lambda_1n^{\frac{2}{3}}+\frac{(\lambda_1+i\lambda_2)^3}{(t_n-2\pi i \theta)^2} + nt_n-2\pi in \theta + o(1)\right)d\theta \nonumber \\
    &+\sum_{\substack{k \neq 2 \\ \frac{h}{k} \in \mathcal{F}_N}} \zeta_k^{nh} \int_{-\theta_{h,k}'}^{\theta_{h,k}''} \exp\left(-\frac{3}{2^{\frac{2}{3}}}\lambda_1n^{\frac{2}{3}}+\frac{\mathrm{Li}_3\left(\zeta_b^{ka}\right)}{k^3(t_n-2\pi i \theta)^2} + nt_n-2\pi in \theta + \delta O(n^{\frac{2}{3}})\right)d\theta. \label{E:majorminorarcsplitcase2}
\end{align} 
Whereas for $\frac{a}{b}=\frac{1}{2}$, we set $\lambda_1:=\frac{\zeta(3)^{\frac{1}{3}}}{2},$ and Lemma \ref{L:Ehk} implies
\begin{align}
    &e^{-\frac{3}{2^{\frac{2}{3}}}\lambda_1n^{\frac{2}{3}}}T_n\left(-1\right)\nonumber \\
    & \hspace{1cm}=(-1)^ne^{-\zeta(-1)}\int_{-\theta_{1,2}'}^{\theta_{1,2}''}(t_n-2\pi i \theta)^{-\frac{1}{12}} \exp\left(-\frac{3}{2^{\frac{2}{3}}}\lambda_1n^{\frac{2}{3}}+\frac{\lambda_1^3}{(t_n-2\pi i \theta)^2} + nt_n-2\pi in \theta + o(1)\right)d\theta \nonumber \\
    &\hspace{2cm} +\sum_{\substack{k \neq 2 \\ \frac{h}{k} \in \mathcal{F}_N}} \zeta_k^{nh} \int_{-\theta_{h,k}'}^{\theta_{h,k}''} \exp\left(-\frac{3}{2^{\frac{2}{3}}}\lambda_1n^{\frac{2}{3}}+\frac{\mathrm{Li}_3\left(\zeta_b^{ka}\right)}{k^3(t_n-2\pi i \theta)^2} + nt_n-2\pi in \theta + \delta O(n^{\frac{2}{3}})\right)d\theta. \label{E:majorminorarcsplitcase3}
\end{align}
The analysis of \eqref{E:majorminorarcsplitcase2} and \eqref{E:majorminorarcsplitcase3} is carried out as before.

\subsection{Proof of Corollary \ref{C:pp-signchanges}} By \eqref{E:ppintermsofT}, we have
\begin{align} \label{eq:pp-diff}
    pp(a_1, b, n) - pp(a_2, b, n) = \frac{2}{b} \mathrm{Re}\left(\left(\zeta_b^{-a_1}-\zeta_b^{-a_2}\right) T_n(\zeta_b)\right) + \frac{1}{b}\sum_{2 \leq j \leq b-2} \left(\zeta_b^{-a_1j} - \zeta_{b}^{-a_2j}\right) T_n\left( \zeta_b^j \right).
\end{align}
Note that $0 < \frac{1}{b} \leq \frac{1}{3} < \theta_{12}$. By definition and Theorem \ref{T:PPasymp}, we find
\begin{align*}
   \frac{2}{b}\left(\zeta_b^{-a_1}-\zeta_b^{-a_2}\right) T_n(\zeta_b) & \sim \frac{2}{b}\left(\zeta_b^{-a_1}-\zeta_b^{-a_2}\right) \frac{\left(1-\zeta_b^a\right)^{\frac{1}{12}}\mathrm{Li}_3\left(\zeta_b^a\right)^{\frac{1}{6}}}{2^{\frac{1}{3}}\sqrt{3\pi}n^{\frac{2}{3}}}\exp\left(\frac{3}{2^{\frac{2}{3}}}\mathrm{Li}_3\left(\zeta_b^a\right)^{\frac{1}{3}}n^{\frac{2}{3}}\right) \\
   & = \frac{B_{a_1, a_2, b}e^{i\alpha_{a_1, a_2, b}}}{n^{\frac{2}{3}}} \exp\left(\frac{3}{2^{\frac{2}{3}}} \lambda_1 n^{\frac{2}{3}} \right) \exp\left(\frac{3}{2^{\frac{2}{3}}} \lambda_2 n^{\frac{2}{3}}i  \right) \\
      & = \frac{B_{a_1, a_2, b}}{n^{\frac{2}{3}}} \exp\left(\frac{3}{2^{\frac{2}{3}}} \lambda_1 n^{\frac{2}{3}} \right) \exp\left(\left( \alpha_{a_1, a_2, b} + \frac{3}{2^{\frac{2}{3}}} \lambda_2 n^{\frac{2}{3}}\right)i  \right).
   \end{align*}
   Hence, 
   \begin{align} \label{eq:complexasy}
    \frac{\frac{2}{b}\left(\zeta_b^{-a_1}-\zeta_b^{-a_2}\right) T_n(\zeta_b)}{B_{a_1, a_2, b} n^{-\frac{2}{3}} \exp\left(\frac{3}{2^{\frac{2}{3}}} \lambda_1 n^{\frac{2}{3}} \right)} = \exp\left(\left( \alpha_{a_1, a_2, b} + \frac{3}{2^{\frac{2}{3}}} \lambda_2 n^{\frac{2}{3}}\right)i  \right) + o(1).  
   \end{align}
When taking real parts in \eqref{eq:complexasy}, we find with $\R$-linearity 
\begin{align} \label{eq:realasy}
  \frac{\frac{2}{b} \mathrm{Re}\left(\left(\zeta_b^{-a_1}-\zeta_b^{-a_2}\right) T_n(\zeta_b)\right)}{B_{a_1, a_2, b} n^{-\frac{2}{3}} \exp\left(\frac{3}{2^{\frac{2}{3}}} \lambda_1 n^{\frac{2}{3}} \right)} = \cos\left( \alpha_{a_1, a_2, b} + \frac{3}{2^{\frac{2}{3}}} \lambda_2 n^{\frac{2}{3}}\right) + o(1).
\end{align}
Now, again with Theorem \ref{T:PPasymp} and Corollary \ref{C:TrilogDominant}, we have 
\begin{align} \label{eq:remainder}
    \frac{\frac{1}{b}\sum_{2 \leq j \leq b-2} \left(\zeta_b^{-a_1j} - \zeta_{b}^{-a_2j}\right) T_n\left( \zeta_b^j \right)}{B_{a_1, a_2, b} n^{-\frac{2}{3}} \exp\left(\frac{3}{2^{\frac{2}{3}}} \lambda_1 n^{\frac{2}{3}} \right)} = O\left( e^{-c_bn^{\frac23}}\right),
\end{align}
for some $c_b > 0$. The claim follows with \eqref{eq:pp-diff}, \eqref{eq:realasy} and \eqref{eq:remainder}.
\subsection{Proof of Theorem \ref{T:planeoverpartitions}}

In fact there is very little to change between the proofs of Theorems \ref{T:PPasymp} and \ref{T:planeoverpartitions}, as the bounds for error terms in Section 4.1, together with Proposition \ref{P:zeta3trilogdecreasing} to identify the major arc, can readily be applied.  With $N=\lfloor \delta n^{\frac{1}{3}} \rfloor$ as before, we write
\begin{align*}
    \mathcal{A}_n\left(\zeta_b^a\right)&=\sum_{\substack{\gcd(h,k)=1 \\ \frac{h}{k} \in \mathcal{F}_N}} \zeta_k^{nh} \int_{-\theta_{h,k}'}^{\theta_{h,k}''} \exp\left(\log \overline{PP}\left(\zeta_b^a;e^{-t_n+2\pi i \theta}\right) + nt_n-2\pi in \theta\right)d\theta \\
    &=\sum_{\substack{\gcd(h,k)=1 \\ \frac{h}{k} \in \mathcal{F}_N}} \zeta_k^{nh} \int_{-\theta_{h,k}'}^{\theta_{h,k}''} \exp\left(\frac{\zeta(3)-\mathrm{Li}_3\left(\zeta_b^{ka}\right)}{k^3(t_n-2\pi i \theta)^2} + nt_n-2\pi in \theta + \overline{E}_{h,k}(\zeta_{b}^a; t_n-2\pi i \theta)\right)d\theta,
\end{align*}
where 
$$
\overline{E}_{h,k}(z,t):=\log \overline{PP}\left(z;e^{-t}\right)-\frac{\zeta(3)-\mathrm{Li}_3\left(z^k\right)}{k^3t^2},
 \qquad 
t_n:=\frac{2^{\frac{1}{3}}\left(\zeta(3)-\mathrm{Li}_3\left(\zeta_b^a\right)\right)^{\frac{1}{3}}}{n^{\frac{1}{3}}}.
$$
Note that
$$
\Log\left(\overline{PP}(\zeta;q)\right)=\Log\left(PP(1;q)\right)-\Log\left(PP(\zeta;q)\right),
$$
so $$\overline{E}_{h,k}(z;t)=E_{0,1}(1;t)-E_{h,k}(z;t),$$ and we can show as in Lemma \ref{L:Ehk} that uniformly for $\theta\in [-\theta_{h,k}', \theta_{h,k}'']$ we have
\begin{equation}\label{E:Ehkbarbound}
\overline{E}_{h,k}(\zeta_b^a;t_n-2 \pi i \theta) = O(N^2).
\end{equation}
The only reason we cannot directly quote Lemma \ref{L:Ehk} is that our $t_n$ is a different constant times $n^{-\frac{1}{3}}.$  But this constant does not affect the proofs of Lemmas \ref{L:Ehk}, so we obtain \eqref{E:Ehkbarbound}.  The proof of Theorem \ref{T:planeoverpartitions} then follows Section 4.2 {\it mutatis mutandis}, this time using Proposition \ref{P:zeta3trilogdecreasing} to show that the major arc occurs at $\frac{h}{k}=\frac{0}{1}$ for all $\frac{a}{b}$.

\section{Outlook}\label{S:Outlook}

In general, one could ask for asymptotic formulae for the polynomials generated by products of the shape
$$
\prod_{n \geq 1} \frac{1}{(1-\zeta q^n)^{a_n}}, \qquad (a_n \geq 0),
$$
for $\zeta$ any root of unity.  This would generalize Meinardus' classical theorem \cite[Ch. 6]{Andrews} which is the case $\zeta=1$ and extend a theorem of Parry who proved asymptotic formulas when $|\zeta|<1.$  For Meinardus' theorem, one requires certain technical hypotheses on the sequence $\{a_n\}_{n \geq 1}$, and an extension of our method appears to require an excessive number of such hypotheses.  We thus leave the general case as an open problem.

Such an extension of Meinardus' theorem would  immediately yield information on the sign changes for many partition-theoretic objects.  One notable example comes from partitions into $\ell$-th powers, i.e.
$$
\sum_{n \geq 0} Q_n^{(\ell)}(\zeta)q^n=\prod_{n \geq 1} \frac{1}{1-\zeta q^{n^{\ell}}}.
$$
When $\zeta=-1$ the polynomials $Q_n^{(\ell)}(-1)$ count the difference of partitions into an even and odd number of $\ell$-th powers.  Ciolan \cite{Ciolan1, Ciolan2} overcame significant technical obstacles to prove asymptotic formulae for $Q^{(\ell)}_n(-1)$ and additionally gave a combinatorial proof of the following.

\begin{nonumbertheorem}[Theorem 2 of \cite{Ciolan2}] If $\ell \geq 2$, then
$$
Q^{(\ell)}_n(-1) > 0  \quad \text{(if $n$ is even),} \qquad \text{and} \qquad Q^{(\ell)}_n(-1) < 0  \quad \text{(if $n$ is odd).} 
$$
\end{nonumbertheorem}
\noindent It would thus be of great interest to see asymptotic formulae for $Q_n^{(\ell)}(\zeta)$ for arbitrary roots of unity $\zeta$.

In work of Corteel--Safelief--Vuleti\'{c}, there are also analogues for cylindric partitions of the statistics generated by $\mathcal{A}_n(\zeta)$, namely the polynomials generated by
$$
\prod_{\substack{n \geq 1 \\ j \in S}} \frac{1-\zeta q^{j+nT}}{1- q^{j+nT}},
$$
for some finite set $S$ (see \cite[Theorem 8]{CSV}).  We expect our techniques apply to these polynomials as well.

\begin{bibsection}
	\begin{biblist}

\bibitem{Andrews} G. E. Andrews, \textit{The theory of partitions}, Cambridge university press, no.\@2 (1998).	

\bibitem{BenderKnuth} E. A. Bender and D. E. Knuth, {\it Enumeration of plane partitions}, J. Combinatorial Theory {\bf 13} (1972), 40--54.

\bibitem{BoyerParry1} R. Boyer and D. Parry, {\it On the zeros of plane partition polynomials,}  Electron. J. Combin. {\bf 18} (2012).

\bibitem{BoyerParry2} R. Boyer and D. Parry, {\it Plane partition polynomial asymptotics}, Ramanujan J. {\bf 37} (2014).

\bibitem{BoyerParry3} R. Boyer and D. Parry, {\it Phase calculations for planar partition polynomials}, Rocky Mountain J. Math. {\bf 44} (2014).

\bibitem{BoyerParry4} R. Boyer and D. Parry, {\it Zero Attractors of Partition Polynomials}, preprint. 

\bibitem{BFG} W. Bridges, J. Franke, and T. Garnowski, \textit{Asymptotics for the twisted eta-product and applications to sign changes in partitions}, preprint.

\bibitem{BCMO} K. Bringmann, W. Craig, J. Males, and K. Ono, \textit{Distributions on partitions arising from Hilbert schemes and hook lengths}, Forum Math. Sigma, 2022, 10, E49.

\bibitem{Eulermacbringmann} K. Bringmann, C. Jennings-Shaffer, and K. Mahlburg, \textit{On a Tauberian theorem of Ingham and Euler-Maclaurin summation}. Ramanujan J. (2021). 

\bibitem{CCM} G. Cesana, W. Craig, and J. Males, \textit{Asymptotic equidistribution for partition statistics and topological invariants}, preprint.

\bibitem{Ciolan1} A. Ciolan, {\it Asymptotics and inequalities for partitions into squares}, Int. J. Number Theory 16 (2020) {\bf 1}, 121--143.

\bibitem{Ciolan2} A. Ciolan, {\it Equidistribtion and inequalities for partitions into powers}, preprint.

\bibitem{CorteelLovejoy} S. Corteel and J. Lovejoy, {\it Overpartitions}, Trans. Amer. Math. Soc. {\bf 356} (2004), 1623--1635.

\bibitem{CSV} S. Corteel, C. Savelief and M. Vuleti\'{c} {\it Plane overpartitions and cylindric partitions,} J. Combin. Theory Ser. A {\bf 118}, 1239--1269.

\bibitem{DDMP} A. Dabholkar, F. Denef, G. Moore, and B. Pioline, \textit{Precision counting of small black holes}, J. High Energy Phys. (2005), Art. 096.

\bibitem{Duren} P. L. Duren, \textit{Univalent Functions}, Grundlehren der mathematischen Wissenschaften 259, Springer-Verlag, 1983.

\bibitem{Fejer} L. Fejer, \textit{Trigonometrische Reihen und Potenzreihen mit mehrfach monotoner Koeffizientenfolge}, Trans. Amer. Math. Soc., Jan., 1936, Vol. 39, No. 1 (Jan., 1936), pp. 18--59.

\bibitem{Fejer2} L. Fejer, \textit{Untersuchungen \"{u}ber Potenzreihen mit mehrfach monotoner Koeffizientenfolge.}, Acta. Litt. ac Scientiarum Szeged 8 (1936), 89--115.

\bibitem{FS} P. Fong and B. Srinivasan, \textit{The blocks of finite general linear and unitary groups}, Invent. Math. 69 (1982), no. 1, 109--153.

\bibitem{GO} A. Granville and K. Ono, \textit{Defect zero $p$-blocks for finite simple groups}, Trans. Amer. Math. Soc. 348 (1996), no. 1, 331--347.


\bibitem{HX} G. N. Han and H. Xiong, {\it Skew doubled shifted plane partitions: calculus and asymptotics}, Amer. Inst. Math. Sci. {\bf 29} (2021),  1841--1857.

\bibitem{HardyRama} G. H. Hardy and S. Ramanujan, {\it Asymptotic Formulae in Combinatory Analysis}, Proc. Lond. Math. Soc. (2) {\bf 17} (1918), 75--115.

\bibitem{IwanKow} H. Iwaniec and E. Kowalski, Analytic Number Theory, Oxford University Press, 2004.

\bibitem{Lewis} J. L. Lewis, \textit{Convexity of a certain series}, J. London Math. Soc. (2), 27 (1983), 435-446.

\bibitem{MacMahon} P. A. MacMahon, Combinatory analysis, 2 Vols. (Cambridge University Press, Cambridge, 1915 and 1916; Reprinted in one volume: Chelsea, New York, 1960). 

\bibitem{Ok} T. Okazaki, \textit{{M$2$}-branes and plane partitions}, J. High Energy Phys., volume 2022, Article number: 28 (2022).


\bibitem{Parry} D. Parry, {\it A polynomial variant of Meinardus' Theorem}, 	
Int. J. Number Theory, {\bf 11} (2014).

\bibitem{Pinsky} M. Pinsky, Introduction to Fourier Analysis and Wavelets. Graduate Studies in Mathematics. AMS, 2009.

\bibitem{Rademacher} H. Rademacher, {\it On the Partition Function $p(n)$}, Proc. Lond. Math. Soc. (2) {\bf 43} (1937), 241--254.

\bibitem{Stanley} R. P. Stanley, \textit{The conjugate trace and trace of a plane partition}, J. Combinatorial Theory Ser. A {\bf 14} (1973), 53--65.

\bibitem{Stanley2} R. P. Stanley, \textit{Theory and application of plane partitions. I, II.}, Studies in Appl. Math. 50 (1971), 167--188; ibid. 50 (1971), 259--279.

\bibitem{Tenenbaum} G. Tenenbaum, {\it Introduction to Analytic and Probabilistic Number Theory}, Graduate Studies in Mathematics, American Mathematical Society \textbf{163}, Third Edition, 2008.  

\bibitem{Vul1} M. Vuleti\'{c}, {\it The shifted Schur process and asymptotics of large random strict
plane partitions}, Int. Math. Res. Not. IMRN, {\bf 14} (2007).

\bibitem{Vul2} M. Vuleti\'{c}, {\it A generalization of MacMahon's formula}, Trans. Amer. Math. Soc. {\bf 361} (2009), 2789--2804.

\bibitem{Wright3} E. M. Wright, \textit{Asymptotic partition formulae I: Plane partitions}. 	
Q. J. Math. no. 1, (1931), 177--189.

\bibitem{Wright1} E. M. Wright, \textit{Stacks}, Quart. J. Math. Oxford Ser. {\bf 19} (1968), no.\@ 2, 313--320. 

\bibitem{Wright2} E. M. Wright, \textit{Stacks. II}, Quart. J. Math. Oxford Ser. {\bf 22} (1971), no.\@ 2, 107--116.

	\end{biblist}
\end{bibsection}

\end{document}